\newcommand{\R}{\mathbb{R}}
\newcommand{\Z}{\mathbb{Z}}
\newcommand{\N}{\mathbb{N}}
\newcommand{\C}{\mathbb{C}}
\newcommand{\D}{\mathbb{D}}
\newcommand{\T}{\mathbb{T}}
\newcommand{\classeC}{\mathcal{C}}
\renewcommand{\Im}{\operatorname{Im}}
\renewcommand{\Re}{\operatorname{Re}}
\newcommand{\un}{\mathds{1}}
\newcommand{\sinc}{\operatorname{sinc}}
\newcommand\dd{\,{\mathrm d}}
\newcommand{\longrightarroww}[2] {\mathop{\longrightarrow}\limits_{#1}^{#2}}
\newcommand{\weaklim}[2] {\mathop{\rightharpoonup}\limits_{#1}^{#2}}
\newcommand{\vertiii}[1]{{\left\vert\kern-0.25ex\left\vert\kern-0.25ex\left\vert #1 
    \right\vert\kern-0.25ex\right\vert\kern-0.25ex\right\vert}}
\newtheorem{mydef}{Definition}[section]
\newtheorem{thm}[mydef]{Theorem}
\newtheorem{lem}[mydef]{Lemma}
\newtheorem{prop}[mydef]{Proposition}
\newtheorem{cor}[mydef]{Corollary}
\newtheorem{rk}[mydef]{Remark}
\title{Lax eigenvalues in the zero-dispersion limit for the Benjamin-Ono equation on the torus.}
\author{Louise Gassot}
\date{}
\newcommand{\Addresses}{{
  \bigskip
  \footnotesize
\noindent
  \textsc{Departement
Mathematik und Informatik, Universität Basel, Spiegelgasse 1, 4051 Basel, Schweiz}\par\nopagebreak
  \noindent
  \textit{E-mail address :} \texttt{louise.gassot@normalesup.org}
}}
\pgfplotsset{compat=1.17} 
\begin{document}
\maketitle
\vspace{-20pt}

\abstract{
We consider the zero-dispersion limit for the Benjamin-Ono equation on the torus for bell shaped initial data. Using the approximation by truncated Fourier series, we transform the eigenvalue equation for the Lax operator into a problem in the complex plane. Then, we use the steepest descent method to get asymptotic expansions of the Lax eigenvalues. As a consequence, we determine the weak limit of solutions as the dispersion parameter goes to zero, as long as the initial data is an even bell shaped potential.}

\tableofcontents


\section{Introduction}


In this paper, we focus on the zero-dispersion limit  $\varepsilon\to 0$ for the Benjamin-Ono equation on the torus.  The parameter $\varepsilon$ balances the dispersive and the nonlinear effects in the Benjamin-Ono equation
\begin{equation}\tag{BO-$\varepsilon$}\label{eq:bo}
\partial_t u
	=\partial_x(\varepsilon |\partial_{x}|u-u^2).
\end{equation}
We recall that $|\partial_{x}|$ is the Fourier multiplier $\widehat{|\partial_{x}| u}(n)=|n|\widehat{u}(n)$.

The Benjamin-Ono equation~\cite{Benjamin1967,Ono1977} describes a certain regime of long internal waves in a two-layer fluid of great depth. For $\varepsilon=0$, this equation becomes the inviscid Burgers equation, for which shocks appear in finite time. When $\varepsilon>0$, there is global well-posedness in $H^2$~\cite{Saut1979} (see~\cite{GerardKappelerTopalov2020} for lower regularity results), hence the dispersive term prevents the shock formation. The shock is replaced by a dispersive shock, which manifests as a train of waves, we refer to~\cite{MillerXu2011} for numerical simulations on the real line.

\subsection{Zero-dispersion limit for bell shaped initial data}


The goal of this paper is to describe the weak limit of solutions as $\varepsilon\to 0$. We consider only {\it bell shaped} initial data $u_0(x)\in\classeC^3(\T)$ as defined below.
\begin{mydef}[Bell shaped initial data]\label{def:bell shaped}
We say that $u_0\in\classeC^3(\T)$ is a bell shaped initial data if the following holds:
\begin{enumerate}
\item $u_0$ is real valued with zero mean;
\item there exist $x_{\min},x_{\max}\in (0,2\pi)$ such that $u_0'>0$ on $(x_{\min},x_{\max})$ and $u_0'<0$ on $(x_{\max},x_{\min}+2\pi)$;
\item $x_{\min}=0$;
\item there are exactly two inflection points $\xi_-\in(0,x_{\max})$ and $\xi_+\in(x_{\max},2\pi)$ such that $u_0''(\xi_{\pm})=0$, and the inflection points are simple $u_0'''(\xi_{\pm})\neq 0$.
\end{enumerate}
When $u_0\in\classeC^{0}(\T)$ only satisfies 1.\@ and 2., we say that $u_0$ is weakly bell shaped.
\end{mydef}

Note that conditions 1.\@ and 3.\@ are not restrictive because the real-valued property and the mean of the solution $\int_{\T}u(t,x)\dd x$ are preserved by the flow, and because the Benjamin-Ono equation is invariant by spatial translation. The other two conditions are more technical and aim at simplifying the calculation.

Given a bell shaped initial data, let $u^{\varepsilon}$ be  the solution  to~\eqref{eq:bo} with parameter $\varepsilon$. We prove that as $\varepsilon\to 0$, the weak limit of solutions has an explicit form in terms of the multivalued solution to Burgers' equation obtained by the method of characteristics. More precisely, we say that every point $u^B(t,x)$ is an image of the multivalued solution to Burgers' equation at $(t,x)$ as soon as it solves the implicit equation
\[
u^B(t,x)=u_0(x-2u^B(t,x)t).
\]
Given $t$ and $x$, there may be several branches of solutions that are denoted $u_0^B(t,x)<\dots<u_{2P(t,x)}^B(t,x)$, see Figure~1 in~\cite{bo_zero}. We define the signed sum of branches as
\begin{equation*}
u^B_{alt}(t,x):=\sum_{n=0}^{2P(t,x)}(-1)^nu_n^B(t,x).
\end{equation*}

In~\cite{bo_zero}, we established that given a bell shaped initial data $u_0$, there exists a family of approximate initial data $u_0^{\varepsilon}$ such that the solution to~\eqref{eq:bo} with initial data $u_0^{\varepsilon}$ is weakly convergent to $u^{B}_{alt}$ in $L^2(\T)$, uniformly on compact time intervals, and such that $u_0^{\varepsilon}\to u_0$ in $L^2(\T)$. In this paper, we prove that we can replace the approximate initial data $u_0^{\varepsilon}$ by $u_0$ itself when $u_0$ is bell shaped.
Our main result is the following.

\begin{thm}[Zero-dispersion limit]\label{thm:zero}
Let $u_0\in\classeC^3(\T)$ be an even bell shaped initial data. Then uniformly on compact time intervals, the solution $u^{\varepsilon}$ to the Benjamin-Ono equation~\eqref{eq:bo} with parameter $\varepsilon$ and initial data $u_0$ converges weakly to $u_{alt}^B$ in~$L^2(\T)$ as $\varepsilon\to 0$: for every $T>0$, there holds
\[
u^{\varepsilon}\rightharpoonup u_{alt}^B \quad \text{ in } \quad \mathcal{C}([0,T],L^2_{weak}(\T)). 
\]
\end{thm}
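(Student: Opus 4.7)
The strategy rests on the integrable structure of \eqref{eq:bo}: the Lax operator $L_{u}^{\varepsilon}$ associated with \eqref{eq:bo}, acting on the Hardy space $L^{2}_{+}(\T)$, has a purely discrete spectrum $\lambda_{0}^{\varepsilon}(t)\le\lambda_{1}^{\varepsilon}(t)\le\dots$ preserved by the flow, and there is an explicit trace-type formula expressing $u^{\varepsilon}(t,x)$ in terms of the $\lambda_{n}^{\varepsilon}$ and their eigenfunctions. The paper \cite{bo_zero} already carries out this program for approximate initial data $u_{0}^{\varepsilon}\to u_{0}$; the point here is to bypass the approximation by computing the Lax spectrum of $u_{0}$ itself with enough precision to pass to the weak limit, using the steepest descent analysis announced in the abstract.

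The first step is to rewrite the eigenvalue equation $L_{u_{0}}^{\varepsilon}\phi=\lambda\phi$ in a form amenable to complex analysis. Truncating to Fourier modes $|n|\le N$ with $N\sim 1/\varepsilon$ and packaging the coefficients into a generating function $\Phi(z)=\sum_{n}\phi_{n}z^{n}$ on the unit disk, the eigenvalue equation becomes the vanishing of an explicit scalar holomorphic function $F_{\varepsilon}(\lambda,\cdot)$, whose phase involves an integral of $\log(\lambda-u_{0}(y))\dd y$. The critical points of this phase are exactly the pre-images $y$ of $\lambda$ under $u_{0}$, i.e.\ the branches $u_{n}^{B}$ of the multivalued Burgers inverse.

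The second step is to apply the steepest descent method to $F_{\varepsilon}$. Because $u_{0}$ is bell shaped and even, the saddle-point landscape is highly symmetric, and I expect a Bohr--Sommerfeld-type quantization to emerge, pinning down each $\lambda_{n}^{\varepsilon}$ up to $o(\varepsilon)$ and localizing the eigenfunctions near the turning points $\{y:u_{0}(y)=\lambda_{n}^{\varepsilon}\}$. Plugging these asymptotics into the trace formula for $u^{\varepsilon}(t,x)$ and testing against an arbitrary $\phi\in L^{2}(\T)$, rapidly oscillating contributions vanish by a Riemann--Lebesgue / stationary phase argument, and the remaining stationary contributions collapse, with alternating signs appearing at each traversal of an inflection point, into the sum $\sum_{n=0}^{2P(t,x)}(-1)^{n}u_{n}^{B}(t,x)=u^{B}_{alt}(t,x)$. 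The $t$-dependence is essentially for free, since in action-angle variables the spectral data evolve linearly in $t$, which yields uniformity on compact time intervals.

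The main obstacle I anticipate is uniform control of the steepest descent across the full range of eigenvalues. At the spectral edges $\lambda=\min u_{0}$ and $\lambda=\max u_{0}$, as well as at the inflection values $\lambda=u_{0}(\xi_{\pm})$, two saddle points coalesce and the Gaussian saddle-point formula degenerates, so one is forced into Airy-type uniform asymptotics; this is precisely where the $\classeC^{3}$ regularity and the non-degeneracy hypothesis $u_{0}'''(\xi_{\pm})\neq 0$ should enter. A secondary technicality is to justify the Fourier truncation rigorously: the perturbation of each Lax eigenvalue caused by truncating $u_{0}$ must be $o(\varepsilon)$ uniformly in the spectral index, so that the expansions from the second step can be transferred back to the true spectrum; once this is in place, the identification of the weak limit with $u^{B}_{alt}$ follows the template of \cite{bo_zero}.
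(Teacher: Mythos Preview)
Your outline captures the broad architecture---Lax spectrum, steepest descent, Bohr--Sommerfeld quantization, trace formula---but it contains a structural gap that the paper is explicitly designed to circumvent, and it omits an ingredient that is essential to the argument.

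\textbf{The truncation $N\sim 1/\varepsilon$ does not close.} You propose to truncate $u_{0}$ to Fourier modes $|n|\le N$ with $N\sim 1/\varepsilon$, run the steepest descent on the truncated problem, and then transfer back to the true spectrum via a perturbation argument requiring the truncation error on each eigenvalue to be $o(\varepsilon)$. This is precisely the approach the paper abandons. The steepest-descent remainders carry constants $C_{N}(\delta)$ that grow at least like $(C_{u}(\delta))^{CN^{2}}$ (see the bounds on $R(\sqrt{\varepsilon})$ in Section~\ref{part:steepest_descent}), so coupling $N$ to $1/\varepsilon$ destroys the $\varepsilon^{3/2}$ error in the Bohr--Sommerfeld rule. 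As the paper notes in the introduction and in Appendix~\ref{sec:trigonometric_approx}, making this route work would force Fourier decay of the form $|\widehat{u_{0}}(n)|\le C n^{-Cn^{2}}$, far beyond $\classeC^{3}$. The actual proof keeps $N$ \emph{fixed}, obtains the weak limit for each even bell-shaped trigonometric polynomial $u_{0,N}$ (Sections~\ref{sec:eigenvalue_equation}--\ref{sec:zero}), and only then passes $N\to\infty$ via G\'erard's explicit inversion formula~\eqref{eq:fla_bo} from~\cite{Gerard2022explicit}, which is continuous on $L^{\infty}\cap L^{2}$ and therefore commutes with the limit (Section~\ref{sec:bell_shaped}). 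This two-step decoupling of $\varepsilon\to 0$ and $N\to\infty$ is the main new idea relative to~\cite{bo_zero}, and your proposal does not mention it.

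\textbf{Phase constants, not just eigenvalues.} The trace-type formula you invoke depends not only on the $\lambda_{n}^{\varepsilon}$ but also on the phase constants $\theta_{n}$, and the approximation $e^{i(\theta_{n+1}-\theta_{n})}\approx -1$ required in~\cite{bo_zero} is \emph{not} a consequence of the eigenvalue asymptotics. This is where the evenness of $u_{0}$ is actually used: for even potentials one shows (Appendix~\ref{appendix:phase}) that $\overline{\zeta_{n}}\zeta_{n+1}\in\R$, so $e^{i(\theta_{n+1}-\theta_{n})}=\pm 1$ exactly, and a separate argument (Proposition~\ref{prop:phase-zero}) controls the contribution of the ``wrong-sign'' indices. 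Your sketch attributes the evenness only to simplifying the saddle landscape, which misses its real role.

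\textbf{No Airy asymptotics are needed.} Rather than uniform Airy-type expansions near the spectral edges and inflection values, the paper excises $\delta$-neighborhoods of the degenerate values (the sets $\widetilde{\Lambda}_{\pm}(\delta)$), obtains bounds with constants $C_{u}(\delta)$, and sends $\delta\to 0$ only after $\varepsilon\to 0$. This is cruder but sufficient for weak convergence, and avoids the technical overhead you anticipate.
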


In the paper, we mostly focus on the proof of the theorem when $u_0$ is an even bell shaped trigonometric polynomial. Then we rely on an explicit formula of the solution in terms of the Lax operator from Gérard~\cite{Gerard2022explicit} to extend this result to more general initial data.

\paragraph{Zero-dispersion limit for the Benjamin-Ono equation on the line}
Regarding the Benjamin-Ono equation on the line,   
the first formal approaches to the zero-dispersion limit problem for the Benjamin-Ono equation have been employed to justify the Whitham modulation theory~\cite{Whitham}  in~\cite{Matsuno1998,Matsuno1998small} and in~\cite{Jorge1999}, and the case of more general nonlocal Benjamin-Ono equations which are not necessarily integrable is tackled in the paper~\cite{ElNguyenSmyth2018}. A similar result as Theorem 1.2 from~\cite{bo_zero} (using an approximate initial data $u_0^{\varepsilon}$) was established by Miller and Xu in~\cite{MillerXu2011}. An extension to the Benjamin-Ono hierarchy has been investigated in~\cite{MillerXu2011hierarchy}.

In order to remove the approximate initial data $u_0^{\varepsilon}$, a better understanding of the direct scattering transform in the zero-dispersion limit is needed. The scattering data was studied by Miller and Wetzel in~\cite{MillerWetzel2016rational} on the line when the initial data is a rational potential of the form
\[
u_0(x)=\sum_{n=1}^N\frac{c_n}{x-z_n}+c.c.
\]
where $c_n\in\C^*$ and $\Im(z_n)>0$ for $1\leq n\leq N$, moreover, the poles $z_n$ have distinct real parts, and $\sum_{n=1}^Nc_n+\overline{c_n}=0$. In particular, every $N$-soliton (see Definition 1.1 in~\cite{Sun2020}) is rational. In the zero-dispersion limit, an asymptotic expansion of the scattering data for Klaus-Shaw initial data (a variation of the bell-shaped assumption adapted to the line) has been established in~\cite{MillerWetzel2016}. As a consequence, the choice of the scattering data of the approximate initial data $u_0^{\varepsilon}$ in~\cite{MillerXu2011} is justified and it is likely that one could replace the approximate initial data $u_0^{\varepsilon}$ by the actual initial data $u_0$ itself when $u_0$ is a rational Klaus-Shaw initial data.

\paragraph{Integrability for the Benjamin-Ono equation}
To generalize the class of initial data $u_0$ for which we do not need to rely on approximate initial data $u_0^{\varepsilon}$, we make use of the integrability properties of the Benjamin-Ono equation.

On the torus, the complete integrability for the Benjamin-Ono equation has been established by Gérard, Kappeler and Topalov~\cite{GerardKappeler2019, GerardKappelerTopalov2020} for general initial data in $H^s(\T)$ as soon as $s>-\frac 12$, leading to global well-posedness in this range of Sobolev exponents. This integrability property could enable us to generalize the study of the zero-dispersion limit from trigonometric polynomials to bell-shaped initial data in our main theorem, only by estimating the error terms in the trigonometric approximation. However, in order to be successful, such an approach would require a huge decay of the Fourier coefficients of the initial data $u_0$ of the form
\begin{equation*}
|\widehat{u_0}(n)|\leq Cn^{-C n^2}.
\end{equation*}
This is why we rather rely on the explicit formula in~\cite{Gerard2022explicit} which requires less regularity of the initial data.


On the line, complete integrability of the Benjamin-Ono equation is known when restricted to the $N$-soliton manifold, see the recent breakthrough from Sun in~\cite{Sun2020}. However, its extension to more general potentials is an interesting open problem. As a consequence, if we try to estimate the error term in the $N$-soliton approximation of a general initial data, there is not much hope in removing the approximate initial data $u_0^{\varepsilon}$ in the study of the zero-dispersion limit. 
With these challenges in mind, we believe that progress on the class of initial data for which we do not need approximate initial data will come from the fact that an explicit formula of solutions using the Lax operator also holds for the Benjamin-Ono equation on the line in~\cite{Gerard2022explicit}, and from similar techniques as Section~\ref{sec:bell_shaped} from our paper. 

\paragraph{Comparison with the KdV equation}

Historically, the first rigorous approach for zero-dispersion limit problem dates back to Lax and Levermore~\cite{LaxLevermore} for the Korteveg-de Vries (KdV) equation on the line
\[
\partial_t u-3\partial_x(u^2)+\varepsilon^2\partial_{xxx}u=0,
\]
with initial data $u_0\leq 0$ decaying at spatial infinity. The authors establish the existence of a weak limit up to approximate initial data, where the weak limit is different from $u_{alt}^B$ as in the Benjamin-Ono equation. The case of positive initial data was investigated in~\cite{Venakides1985}.

Many refinements of this result were then established for the KdV equation on the line, starting from a description of the oscillations in the dispersive shock~\cite{Venakides1991}, then strong asymptotics for the modulation equations using the steepest descent method~\cite{DeiftVenakidesZhou1997}. For the KdV equation on the torus, one can mention a description of the dispersive shock in~\cite{Venakides1987} and a justification of the Zabusky-Kruskal experiment for the cosine initial data in~\cite{DengBiondiniTrillo2016}. Finally, on the line, Claeys and Grava studied various asymptotic approximations in a neighborhood of the Whitham zone, in particular the gradient catastrophe in~\cite{ClaeysGrava2009universality}, the solitonic asymptotics in~\cite{ClaeysGrava2010solitonic} and the Painlevé II asymptotics in~\cite{ClaeysGrava2010painleve}.
Concerning the Benjamin-Ono equation, it is expected that the gradient catastrophe can be described via a universal profile at the breaking time~\cite{MasoeroRaimondoAntunes2015}.
We refer to the survey~\cite{Miller2016} and to~\cite{KleinSautBook} for more bibliographical information on zero-dispersion limit problems regarding the KdV equation and in other contexts. 

\subsection{Asymptotic expansion of the Lax eigenvalues}

On the torus, the scattering data is replaced by the Birkhoff coordinates introduced by Gérard and Kappeler~\cite{GerardKappeler2019}.
The goal of this paper is to better understand the Birkhoff coordinates of the initial data in the zero-dispersion limit. More precisely, in~\cite{bo_zero}, Definition 1.5, we showed that a certain distribution of {\it Lax eigenvalues} and {\it phase constants} of the initial data $u_0^{\varepsilon}$ as $\varepsilon\to 0$ would imply that the corresponding solution $u^{\varepsilon}$ is weakly convergent to $u_{alt}^B$. Our goal in this paper is to show that the initial data $u_0$ itself satisfies those conditions, so that we do not need to rely on approximate initial data. 

Let us first recall the definition of Lax eigenvalues and phase constants.
Fix $\varepsilon>0$. Denote by $L^2_+(\T)$ the Hardy space of complex-valued functions in $L^2(\T)$ with only nonnegative Fourier modes.  The Lax operator $L_u(\varepsilon)$ is defined on $L^2_+(\T)$ as: 
\[
L_u(\varepsilon)h
	=-i\varepsilon\partial_x h-\Pi(u h),\quad h\in L^2_+(\T).
\]
The operator $\Pi$ is the Szeg\H{o} projector from $L^2(\T)$ onto $L^2_+(\T)$.
We denote by $(\lambda_n(u;\varepsilon))_{n\geq 0}$  the eigenvalues of the Lax operator, and by $(f_n(u;\varepsilon))_{n\geq 0}$ its eigenfunctions. When $\langle f_n(u;\varepsilon)\mid \un\rangle\neq 0$, we define the phase constants as
\[
\theta_n(u;\varepsilon)=\mathrm{arg}(\langle\un\mid f_n(u;\varepsilon)\rangle).
\]
When $\langle f_n(u;\varepsilon)\mid \un\rangle= 0$, one can for instance use the convention $\theta_n(u;\varepsilon)=0$.

\begin{figure}
\begin{center}
\includegraphics[scale=0.4]{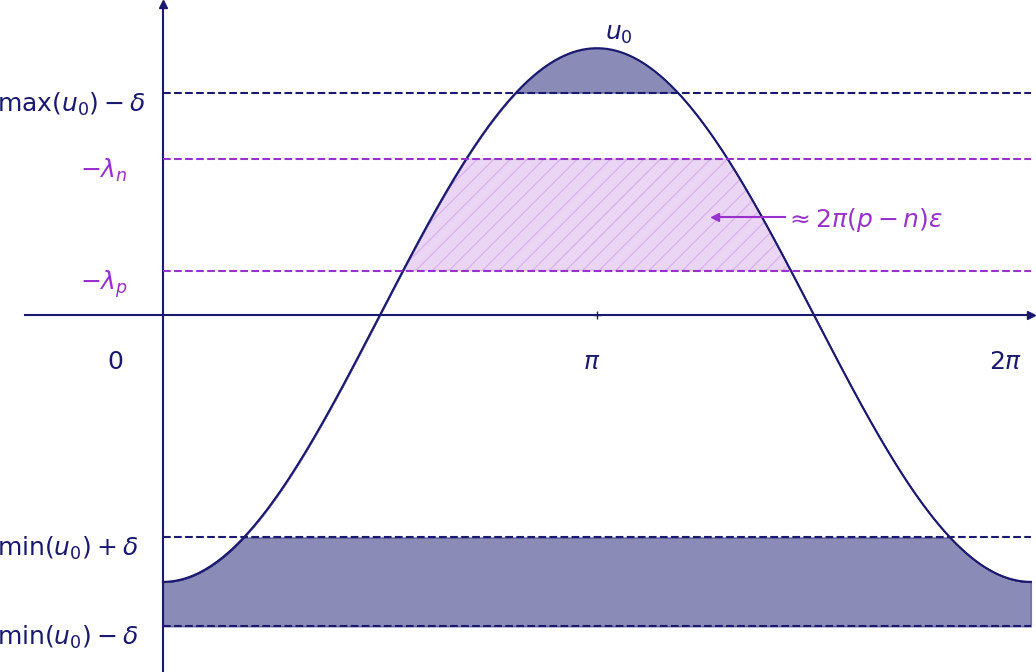}
\end{center}
\caption{Distribution of Lax eigenvalues in the zero-dispersion limit}
\label{fig:bohr}
\end{figure}

The key result of this paper is the following. We prove that the Lax eigenvalues $(\lambda_n(u))_{n\geq 0}$ satisfy a Bohr-Sommerfeld quantization rule, see Figure~\ref{fig:bohr}. Roughly speaking, the area below the curve of $u$ but above the horizontal line $y=-\lambda_n$ should be approximately be equal to $n\varepsilon$.  
These conditions are similar to the assumptions of Definition~1.5 in~\cite{bo_zero}, see also the statement of Corollary 2.2 in~\cite{bo_zero}. They are a refinement of the small-dispersion asymptotics for the Lax eigenvalues already investigated in~\cite{Moll2019-1} in the classical case, see also~\cite{Moll2019-2} in the quantum case. 

We define  the distribution function as follows: for $\eta\in\R$,
\begin{equation}\label{def:F}
F(\eta):=\frac{1}{2\pi}\operatorname{Leb}(x\in[0,2\pi]\mid u_0(x)\geq\eta).
\end{equation}
We also introduce a function $\psi$ defined later in~\eqref{eq:psi} for general trigonometric polynomials. In the case of an even trigonometric polynomial of degree $N$, $\psi$ takes the simplified form~\eqref{eq:psi_even}
\[
\psi(\eta)=-\frac{1}{4}-\eta+(N+1)F(\eta).
\]
In this setting we get an approximation of Lax eigenvalues when the initial data is a trigonometric polynomial according to Definition~1.5 in~\cite{bo_zero}. We distinguish two areas:
\begin{itemize}
\item the small eigenvalues located in
\[
\Lambda_-(\delta)=[-\max(u_0)+\delta,-\min(u_0)-\delta]
\]
\item the large eigenvalues located in
\[
\Lambda_+(\delta)=[-\min(u_0)+\delta,+\infty).
\]
\end{itemize}
For technical reasons we actually restrict our study so slightly smaller sets 
\[
\widetilde{\Lambda}_{\pm}(\delta)=\Lambda_{\pm}(\delta)\setminus\bigcup_{k=1}^M(y_k-\delta,y_k+\delta).
\]

\begin{thm}[Lax eigenvalues for trigonometric polynomials]\label{thm:lax_u_trigo}
Let $u$ be a bell-shaped trigonometric polynomial. There exists $C>0$ such that for every $\delta>0$ and $\varepsilon>0$, there exists $C_u(\delta)>0$ such that the following holds. 
\begin{enumerate}
\item (Small eigenvalues) If $\lambda_n+\varepsilon,\lambda_p+\varepsilon\in\widetilde{\Lambda_-}(\delta)$, we have
\begin{equation*}
\left|\int_{-\lambda_n(u)}^{-\lambda_p(u)}F(\eta)\dd\eta-(n-p+\psi(-\lambda_n)-\psi(-\lambda_p))\varepsilon\right|
	\leq C_u(\delta)\varepsilon\sqrt{\varepsilon}.
\end{equation*}
\item (Large eigenvalues) If $\lambda_n+\varepsilon\in\widetilde{\Lambda_+}(\delta)$, then there holds if $n,p\leq K(\delta)/\varepsilon$
\[
 |\lambda_n-\lambda_p-(n-p)\varepsilon|\leq C_u(\delta)\varepsilon\sqrt{\varepsilon}
\]
and if $n,p\geq K(\delta)/\varepsilon$,
\[
 |\lambda_n-\lambda_p-(n-p)\varepsilon|\leq  \delta.
\]
\item (Two-region eigenvalues) 
If $-\lambda_n-\varepsilon\in\Lambda_-(\delta)$ and $-\lambda_p-\varepsilon\in\Lambda_+(\delta)$, then $|p-n|\geq \frac{\delta}{C\varepsilon}$.
\end{enumerate}
\end{thm}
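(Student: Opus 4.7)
The plan is to translate the eigenvalue equation $L_u(\varepsilon)f=\lambda f$ into a scalar holomorphic ODE on the unit disk and to analyse it by complex WKB / steepest descent. Identifying $f\in L^2_+(\T)$ with its holomorphic extension $h(z)=\sum_{n\geq 0}\widehat{f}(n)z^n$ and writing $u(z)=z^{-N}U(z)$ with $U$ a polynomial of degree $2N$, the equation becomes
\[
\varepsilon z^{N+1}h'(z)-\bigl(U(z)+\lambda z^N\bigr)h(z)=-Q(z),
\]
where $Q$ is the Taylor polynomial of $Uh$ of degree $\leq N-1$ at $0$, depending linearly on $(\widehat f(0),\dots,\widehat f(N-1))$. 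The homogeneous equation has the WKB solution $H_\lambda(z)=z^{\lambda/\varepsilon}\e^{R_\lambda(z)/\varepsilon}$ with $R_\lambda$ the residue-free Laurent antiderivative of $(U(z)+\lambda z^N)/z^{N+1}$; by variation of parameters, $h$ admits a contour-integral representation involving $Q(w)/H_\lambda(w)$, and $\lambda$ is an eigenvalue iff one can choose $Q$ and the contour so that $h$ extends holomorphically to $\D$, equivalently iff a spectral determinant built from these integrals vanishes.

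\textbf{Steepest descent and the three cases.} I would then apply steepest descent to this spectral determinant. The saddle points of the phase $\Phi_\lambda(w)=\lambda\log w+R_\lambda(w)$ solve $U(w)+\lambda w^N=0$, which under $w=\e^{ix}$ is the complexified level set $u(x)=-\lambda$. For $\lambda+\varepsilon\in\widetilde{\Lambda}_-(\delta)$, exactly two saddles $w_\pm=\e^{\pm ix_\pm(\lambda)}$ lie on the unit circle and are $\delta$-separated from the remaining $2N-2$ roots; their stationary-phase contributions combine into a Bohr--Sommerfeld relation
\[
\Im\Phi_\lambda(w_+)-\Im\Phi_\lambda(w_-)=2\pi\varepsilon\bigl(n+\psi(-\lambda)\bigr)+O(\varepsilon^{3/2}),
\]
the shift $\psi$ collecting the Maslov index from each saddle, the $-\eta$ contribution from the $\log$-branch, and the $(N+1)F(\eta)$ counting from the $z^{N+1}$ prefactor. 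Unfolding $w=\e^{ix}$ identifies the left-hand side with $2\pi\int_{-\lambda}^{\max u}F(\eta)\dd\eta$ modulo $\lambda$-independent constants, and differencing at $\lambda_n$ and $\lambda_p$ yields item (1). For $\lambda+\varepsilon\in\widetilde{\Lambda}_+(\delta)$ no saddle lies on $|w|=1$, $H_\lambda$ is essentially the monomial $z^{\lambda/\varepsilon}$, and the determinant collapses to $\sin(\pi\lambda/\varepsilon)$ up to a small correction; this gives item (2) with the sharp $O(\varepsilon^{3/2})$ estimate as long as $n\leq K(\delta)/\varepsilon$, while beyond that range subdominant saddles become comparable and only the weaker $\delta$-statement survives. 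Item (3) follows from the counting function built from items (1)--(2): any $\delta$-window straddling the transition between the small and large regimes accommodates at most $O(\delta/\varepsilon)$ eigenvalues, forcing the index gap $|p-n|\geq\delta/(C\varepsilon)$.

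\textbf{Main obstacle.} The hard part is making the steepest descent uniform in $\lambda\in\widetilde{\Lambda}_\pm(\delta)$ so that $C_u(\delta)$ depends only on $u$ and $\delta$: near the endpoints $\pm\max u$ and $\pm\min u$ and near the critical values $y_k$ excluded by $\widetilde{\Lambda}_\pm$, two saddles coalesce or distinct saddles achieve equal $\Re\Phi_\lambda$, so the steepest-descent contour cannot be routed through a single dominant saddle; this degeneracy is exactly what forces the $\delta$-exclusion. Secondly, extracting the shift $\psi$ with enough precision for the $O(\varepsilon^{3/2})$ error demands careful bookkeeping of the Gaussian prefactors, of the residues at $z=0$, and of the normalization of $Q$, and the reduction to the explicit $\psi(\eta)=-\tfrac{1}{4}-\eta+(N+1)F(\eta)$ in the even case should use the symmetry $w\mapsto\bar w^{-1}$ to pair the two saddles. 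Finally, propagating these asymptotics from a one-eigenvalue statement to the differenced statements in items (1)--(2) relies on the monotonicity of the counting function and on the $\varepsilon$-scale separation of consecutive eigenvalues of $L_u(\varepsilon)$.
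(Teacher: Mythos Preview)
Your outline is correct and is essentially the paper's strategy: reduce to an $N\times N$ Evans determinant of contour integrals with phase $S_\lambda(z)=Q(z)-\lambda\log z$, deform each contour through a single saddle, apply steepest descent, and read off the quantization condition; the large-eigenvalue case, the role of the $\delta$-excisions, and the index-gap argument for item (3) are all as in the paper.

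The one structural point you leave vague is precisely how the determinant collapses to a pure phase condition. In the paper the $N$ contours are deformed (via a topological ``tree-pruning'' bookkeeping of how the sublevel sets of $\Re S_\lambda$ merge) so that row $k$ of the matrix is dominated by the single saddle $p_k$; the leading-order matrix is then Vandermonde in $p_1^{-1},\dots,p_{N-1}^{-1},p_\pm^{-1}$, and the determinant factors as a common prefactor times $1-e^{2i\pi\psi(\lambda)-(S_\lambda(p_-)-S_\lambda(p_+))/\varepsilon}$. The key algebraic identity, which you would need to supply, is that the Gaussian amplitudes $|S_\lambda''(p_\pm)|^{-1/2}$ combine with the Vandermonde ratio $\det V_-/\det V_+$ to give a complex number of modulus exactly one (this is the paper's Lemma~\ref{lem:S''}); without it one gets an amplitude-weighted equation rather than a clean Bohr--Sommerfeld phase condition, and the shift $\psi$ would not emerge in closed form.
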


Compared to Definition~1.5 in~\cite{bo_zero}, there is an extra term $(\psi(-\lambda_n)-\psi(-\lambda_p))\varepsilon$, but we will see later in Section~\ref{sec:zero} that as $\psi$ is Lipschitz and $n,p$ can be restricted to the condition $|n-p|\leq\varepsilon^{-r}$, this extra term can be treated as a remainder term.
Note that the constant $C_u(\delta)$ actually depends both on the choice of bell shaped trigonometric polynomial $u$ and on $\delta$.
The parameter~$\delta$ is introduced for technical purposes. Indeed, we need to remove some bands $[y_k-\delta,y_k+\delta]$ which correspond to approaching a hidden complex stationary point of the phase with double multiplicity in order to get uniform bounds on the remainder terms. Similarly, we need to remove the bands $[-\min(u)-\delta,-\min(u)+\delta]$ and $[-\max(u)-\delta,-\max(u)]$ in order to have uniform bounds on the remainder terms when applying the stationary phase lemma when the stationary points are the antecedents of $u$ by $\lambda$.

As a corollary, we get the following approximation for general bell-shaped functions  according to Corollary~2.2 in~\cite{bo_zero}.
\begin{cor}[Lax eigenvalues in the zero-dispersion limit]\label{thm:lax_u}
Let $u\in\classeC^3(\T)$ be bell-shaped. There exists $C>0$ such that for every $N\geq 0$ and $\delta>0$, the following holds. There exists $C_N(\delta)>0$ and $\varepsilon_0(\delta)>0$ such that if and $0<\varepsilon<\varepsilon_0(\delta)$, then the following holds.
\begin{enumerate}
\item For every $n\geq 0$, we have
\begin{equation*}
\left|\int_{-\lambda_n(u)}^{\max(u)}F(\eta)\dd\eta-n\varepsilon\right|
	\leq \frac{C}{N\sqrt{N}}+C_N(\delta)\varepsilon\sqrt{\varepsilon}+C\delta.
\end{equation*}
\item (Large eigenvalues) If $\lambda_n+\varepsilon\in\Lambda_+(\delta)=[-\min(u_0)+\delta,+\infty)$, then there holds
\[
\sum_{k= n+1}^{\infty}\gamma_k\leq \frac{C}{N\sqrt{N}}+ C_N(\delta)\varepsilon\sqrt{\varepsilon}+C\delta.
\]
\item (Two-region eigenvalues) 
If $\lambda_n+\varepsilon\in\Lambda_-(\delta)=[-\max(u_0)+\delta,-\min(u_0)-\delta]$ and $\lambda_p+\varepsilon\in\Lambda_+(\delta)=[-\min(u)+\delta,+\infty)$, then $|p-n|\geq \frac{\delta}{C\varepsilon}$.
\end{enumerate}
\end{cor}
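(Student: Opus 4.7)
The plan is to deduce the corollary from Theorem~\ref{thm:lax_u_trigo} by Fourier truncation combined with standard perturbation theory for the self-adjoint Lax operator. Given $u\in\mathcal{C}^3(\T)$ bell shaped and an integer $N$, I choose a bell-shaped trigonometric polynomial $u_N$ of degree at most $N$ approximating $u$; up to a small correction of the truncated Fourier series needed to preserve the simple-inflection conditions of Definition~\ref{def:bell shaped}, the $\mathcal{C}^3$ regularity of $u$ yields
\[
\|u-u_N\|_{L^\infty(\T)}\leq \frac{C}{N\sqrt{N}},
\]
which supplies the first error term in each item.

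The spectral input is the elementary operator bound $\|L_u(\varepsilon)-L_{u_N}(\varepsilon)\|_{L^2_+\to L^2_+}\leq \|u-u_N\|_{L^\infty}$, which follows from $L_u-L_{u_N}=-\Pi\bigl((u-u_N)\cdot\bigr)$ together with $\|\Pi\|\leq 1$. The min--max principle then yields $|\lambda_n(u;\varepsilon)-\lambda_n(u_N;\varepsilon)|\leq CN^{-3/2}$ for every $n\geq 0$, and correspondingly $|\gamma_n(u)-\gamma_n(u_N)|\leq 2CN^{-3/2}$ for each spectral gap. The geometric data are handled in parallel: since $u$ is bell shaped, $u'$ is bounded away from zero outside arbitrarily small neighborhoods of its extrema, so $F_u$ and the auxiliary function $\psi_u$ are Lipschitz on $\widetilde{\Lambda}_\pm(\delta)$ with constants depending only on $\delta$, and
\[
|F_u(\eta)-F_{u_N}(\eta)|+|\psi_u(\eta)-\psi_{u_N}(\eta)|\leq C_\delta\,\|u-u_N\|_{L^\infty}
\]
uniformly on these sets.

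Plugging these comparisons into Theorem~\ref{thm:lax_u_trigo} applied to $u_N$ yields items (1) and (3) directly, with $C_N(\delta):=C_{u_N}(\delta)$. Item~(2) requires a slight refinement: the pointwise gap bound cannot simply be summed over $k\geq n+1$, so the tail sums are compared through the total sum $\sum_k \gamma_k$, which is controlled by a Sobolev-type norm of the potential via the Birkhoff coordinates and hence by $\|u-u_N\|_{H^s}$ for a suitable $s$; the remaining tail $\sum_{k\geq n+1}\gamma_k(u_N)$ is essentially finite because only $O(N)$ gaps of a degree-$N$ trigonometric polynomial are non-trivial, and Theorem~\ref{thm:lax_u_trigo} bounds them in the large-eigenvalue regime. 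The main subtlety, reflected in the three-term bound, is the order of parameters: one must first fix $N$ so that $C/(N\sqrt{N})$ is small, then choose $\delta$ to shrink the technical bands excluded in Theorem~\ref{thm:lax_u_trigo}, and finally send $\varepsilon\to 0$. The constants $C_{u_N}(\delta)$ inherited from the steepest-descent analysis depend on the geometry of $u_N$ (its extrema, inflection points, and the locations $y_k$ of hidden complex stationary points) and may degenerate as $N\to\infty$, which is precisely why the result is stated in this scale-separated form rather than as a single vanishing remainder.
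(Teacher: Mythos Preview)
Your overall approach---Fourier truncation plus the operator inequality $\|L_u(\varepsilon)-L_{u_N}(\varepsilon)\|\le\|u-u_N\|_{L^\infty}$ and the min--max principle---is exactly what the paper does for items~(1) and~(3); the bound $\|u-u_N\|_{L^\infty}\le C N^{-3/2}$ is Proposition~\ref{prop:comonotone}. (A small remark: the paper does \emph{not} correct the truncated series to restore the simple-inflection condition; it shows the truncation is already comonotone, and the steepest-descent argument behind Theorem~\ref{thm:lax_u_trigo} only uses this monotonicity together with Lemma~\ref{lem:spacing}, not condition~4 of Definition~\ref{def:bell shaped}.)

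For item~(2), however, your argument contains a genuine error: a trigonometric polynomial of degree $N$ is \emph{not} in general a finite-gap potential for the Benjamin--Ono Lax operator, so the claim that ``only $O(N)$ gaps of a degree-$N$ trigonometric polynomial are non-trivial'' is false. (Were it true, the large-eigenvalue part of Theorem~\ref{thm:lax_u_trigo} would be vacuous, whereas the paper devotes all of Section~\ref{part:steepest_descent_outside} to it.) The paper's route is different and more direct: it applies the trace identity
\[
\varepsilon\sum_{k\ge 1}k\,\gamma_k(u;\varepsilon)=\tfrac12\|u\|_{L^2}^2
\]
to $u$ itself, which gives $\sum_{k\ge K}\gamma_k(u;\varepsilon)\le \|u\|_{L^2}^2/(2K\varepsilon)\le\delta$ as soon as $K\ge K(\delta)/\varepsilon$, yielding the $C\delta$ term without any comparison to $u_N$. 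For indices $n$ in $\Lambda_+(\delta)$ below this threshold one uses the telescoping identity $\sum_{k>n}\gamma_k=n\varepsilon-\lambda_n$ (from $\gamma_k=\lambda_k-\lambda_{k-1}-\varepsilon$ and $\lambda_M-M\varepsilon\to 0$) together with the one-index estimate $\lambda_n(u_N)=n\varepsilon+O(\varepsilon^{3/2})$ obtained in~\eqref{eq:large-eigenvalues}, transferred to $u$ at the cost of $CN^{-3/2}$.
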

In the statement, the constant $C_N(\delta)$ depends on the degree $N$ of the trigonometric approximation and on $\delta$, but also on the choice of the bell shaped initial data itself. Hence this statement is not uniform with respect to the choice of initial data. Moreover, the upper bound is not sufficient in general to prove Theorem~\ref{thm:zero}. This is why we rather rely on the inversion formula from~\cite{Gerard2022explicit}.

We also get some information of the phase constants of even trigonometric polynomials.
\begin{thm}[Phase constants in the zero-dispersion limit]\label{thm:phase_zero}
Assume that $u_0$ is a trigonometric polynomial, bell shaped, and  even. Then the differences of two consecutive phase constants are multiples of $\pi$: for every $n$,
\[
e^{i(\theta_{n+1}(u_0;\varepsilon)-\theta_n(u_0;\varepsilon))}=\pm 1.
\]
Moreover, let
\[
J(u_0;\varepsilon)=\left\{n\geq 1\mid \lambda_n+\varepsilon\in\Lambda_-(\delta),\quad   e^{i(\theta_{n+1}-\theta_{n})(u_0;\varepsilon)}=1\right\}.
\]
Then for some $R(\varepsilon)\to 0$ as $\varepsilon\to 0$,
\[
\varepsilon\sum_{n\in J(u_0;\varepsilon)} \sinc(\pi F(-\lambda_n))
	\leq C(\delta)R(\varepsilon)+C\delta.
\]
\end{thm}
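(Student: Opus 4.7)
\emph{First part.} I would use a parity symmetry of the Lax operator. On $L^2_+(\T)$ consider the antilinear involution $\sigma f(x):=\overline{f(-x)}$: it sends $\sum_{k\geq 0}a_k e^{ikx}$ to $\sum_{k\geq 0}\overline{a_k}e^{ikx}$, so preserves the Hardy space and is an isometric involution squaring to the identity. Since $u_0$ is real and even, $\sigma(u_0 f)=u_0\,\sigma f$, and one checks directly that $\sigma$ commutes both with $\Pi$ and with $-i\varepsilon\partial_x$ (the sign change in $\partial_x$ is compensated by the one in $-i\varepsilon$), so $\sigma L_{u_0}(\varepsilon)=L_{u_0}(\varepsilon)\sigma$. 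As the spectrum of $L_{u_0}(\varepsilon)$ is simple and real, $\sigma f_n = c_n f_n$ with $|c_n|=1$; after replacing $f_n$ by $\sqrt{c_n}\,f_n$ we may assume $\sigma f_n = f_n$, which says that all Fourier coefficients of $f_n$ are real. Therefore $\langle\un\mid f_n\rangle\in\R$, which gives $e^{i\theta_n}\in\{\pm 1\}$ and hence the first claim for any sign convention compatible with $\sigma f_n = f_n$.

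\emph{Second part.} The picture is that within $\widetilde{\Lambda_-}(\delta)$ the sign of $\langle\un\mid f_n\rangle$ normally alternates with $n$, so $e^{i(\theta_{n+1}-\theta_n)}=-1$ and $J$ is sparse. I would apply the steepest descent analysis developed for the proof of Theorem~\ref{thm:lax_u_trigo} directly to the oscillatory representation of $\langle\un\mid f_n\rangle$ (equivalently, the zeroth Fourier mode of $f_n$). For $\lambda_n\in\widetilde{\Lambda_-}(\delta)$ this should produce a uniform two--stationary--point expansion of the form
\[
\langle\un\mid f_n\rangle = B(\lambda_n)\cos\!\left(\frac{\Phi(\lambda_n)}{\varepsilon}+\beta(\lambda_n)\right)+\mathcal{O}(\sqrt{\varepsilon}),
\]
where $\Phi$ is a real classical action, $\beta$ a Maslov-type phase and $B$ is bounded below on $\widetilde{\Lambda_-}(\delta)$. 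Taking $p=n+1$ in the Bohr--Sommerfeld rule of Theorem~\ref{thm:lax_u_trigo} and using that $\psi$ is Lipschitz with $\lambda_{n+1}-\lambda_n=\mathcal{O}(\varepsilon)$ gives $\Phi(\lambda_{n+1})-\Phi(\lambda_n)=\pi\varepsilon+\mathcal{O}(\varepsilon\sqrt{\varepsilon})$. Hence the cosine flips sign between $\lambda_n$ and $\lambda_{n+1}$, so that $n\notin J$, unless its argument falls within $\mathcal{O}(\sqrt{\varepsilon})$ of $\pi/2\pmod \pi$.

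\emph{Summation and obstacle.} The ``bad'' set where the cosine is within $\mathcal{O}(\sqrt{\varepsilon})$ of zero has relative measure $\mathcal{O}(\sqrt{\varepsilon})$ in the variable $\Phi(\lambda)/\varepsilon\pmod\pi$; combined with the eigenvalue density $\mathcal{O}(1/\varepsilon)$ in $\widetilde{\Lambda_-}(\delta)$ and the fact that $\widetilde{\Lambda_-}(\delta)$ has bounded length, this yields $|J\cap\{n:\lambda_n\in\widetilde{\Lambda_-}(\delta)\}|=\mathcal{O}(\varepsilon^{-1/2})$, and since $\sinc(\pi F)\leq 1$ the corresponding contribution to $\varepsilon\sum_{n\in J}\sinc(\pi F(-\lambda_n))$ is $\mathcal{O}(\sqrt{\varepsilon})$, absorbed into $C(\delta)R(\varepsilon)$ with $R(\varepsilon)=\sqrt{\varepsilon}$. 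Indices whose $\lambda_n$ lies in one of the excised $\delta$-bands around the hidden critical values $y_k$ or around $-\min u_0,-\max u_0$ number at most $\mathcal{O}(\delta/\varepsilon)$ per band, contributing $\mathcal{O}(\delta)$ after multiplication by $\varepsilon$, which produces the additive $C\delta$. The main obstacle is establishing the uniform cosine asymptotic above up to the boundary of $\widetilde{\Lambda_-}(\delta)$: near each $y_k$ the phase $\Phi$ develops a degenerate stationary point and the steepest descent contour coalesces, so the oscillatory integral for $\langle\un\mid f_n\rangle$ must be matched to a uniform Airy-type expansion; this loss of uniformity is precisely what forces the excision of the bands $(y_k-\delta,y_k+\delta)$ and the presence of the unremovable $C\delta$ correction.
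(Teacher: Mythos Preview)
Your antilinear involution $\sigma f(x)=\overline{f(-x)}$ is correct and gives a cleaner route than the paper's. The paper instead works with the matrix $A(\lambda;\varepsilon)$ for trigonometric polynomials, shows $A_{k,\ell}=-\overline{A_{N-k,\ell}}$ and $e^{i\pi\lambda/\varepsilon}A_{N,\ell}\in\R$, deduces that the kernel vector $V$ is real up to a phase, then argues $z\mapsto\overline{f(\bar z)}$ is again an eigenfunction to get $\langle f_n\mid Sf_n\rangle\in\R$, and finally invokes the Gérard--Kappeler identity $\langle f_n\mid Sf_n\rangle=-\tfrac{1}{\varepsilon}\sqrt{\mu_{n+1}}\,\tfrac{\sqrt{\kappa_n}}{\sqrt{\kappa_{n+1}}}\,\overline{\zeta_n}\zeta_{n+1}$ (plus density from trigonometric polynomials to general even $u$). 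Your symmetry argument reaches $\langle f_n\mid Sf_n\rangle\in\R$ directly, without the matrix analysis and without density. One caveat: from $\sigma f_n=f_n$ you get $\langle\un\mid f_n\rangle\in\R$ for \emph{that} normalization, but $\theta_n$ itself is not normalization-invariant; the well-defined object is $\overline{\zeta_n}\zeta_{n+1}$, and you should close via the same Gérard--Kappeler identity rather than asserting $e^{i\theta_n}\in\{\pm1\}$ individually.

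\textbf{Second part.} Here your route diverges substantially from the paper's and has a real gap. The paper does \emph{not} perform any steepest descent on $\langle\un\mid f_n\rangle$; instead it uses a clever indirect trick from the companion paper~\cite{bo_zero}. Inequality~(17) of~\cite{bo_zero} at $k=1$ gives
\[
\Bigl|\widehat{u_0}(1)-\varepsilon\sum_{\lambda_n+\varepsilon\in\Lambda_-(\delta)}\sinc(\pi F(-\lambda_n))\,(-1)e^{i(\theta_{n+1}-\theta_n)}\Bigr|\leq C(\delta)\varepsilon^{r-c}+C\delta,
\]
while Theorems~3.9 and~3.14 of~\cite{bo_zero} applied to the \emph{admissible} initial data (same eigenvalues, all $e^{i(\theta_{n+1}-\theta_n)}=-1$) show that the same sum with $-1$ in place of $e^{i(\theta_{n+1}-\theta_n)}$ also approximates $\widehat{u_0}(1)$ up to $C\delta$. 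Subtracting, the sum over $J$ is exactly half the discrepancy. No asymptotic for $\langle\un\mid f_n\rangle$ is ever needed.

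Your direct approach would require an oscillatory-integral representation of $f_n(0)$ amenable to a two-saddle expansion with amplitude bounded below uniformly on $\widetilde\Lambda_-(\delta)$. The paper's integral formula for the eigenfunctions (its equation~\eqref{eq:fk_vj}) does not immediately give this: the prefactor $z^{\lambda}e^{-Q(z)}$ is singular at $z=0$, so $f_n(0)$ is read off only through the coefficient system (the $b_\ell$'s), not as a single saddle integral. Establishing your cosine asymptotic, checking that the unknown amplitude $B(\lambda)$ stays bounded below, and making the ``bad set has measure $\mathcal O(\sqrt\varepsilon)$'' counting rigorous would all be new work beyond what the paper develops. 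The idea is not implausible, but as written it is a heuristic, and the paper's $\widehat{u_0}(1)$ trick bypasses it entirely.
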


Theorems~\ref{thm:lax_u_trigo} and~\ref{thm:phase_zero} present some differences with Definition 1.5  in~\cite{bo_zero}. Hence, once we prove this asymptotic distribution of Lax eigenvalues and phase constants, we will make precise how one can adapt the arguments in~\cite{bo_zero} in the study of the zero-dispersion limit and get Theorem~\ref{thm:zero}.

\subsection{Strategy of proof}

The strategy for establishing Theorem~\ref{thm:lax_u_trigo} is inspired from the study of spectral data for the Benjamin-Ono equation on the line in~\cite{MillerWetzel2016rational, MillerWetzel2016}.
We transfer the problem into a problem of complex analysis.
We use the identification between the spaces $L^2_+(\T)$ and $L^2_+(\D)$, where
\[
L^2_+(\T)=\left\{u:x\in\T\mapsto \sum_{k=0}^{+\infty}\widehat{u}(k)e^{ikx}\mid \sum_{k=0}^{+\infty}|\widehat{u}(k)|^2<+\infty\right\},
\]
\[
L^2_+(\D)=\left\{u:z\in\D\mapsto \sum_{k=0}^{+\infty}\widehat{u}(k)z^k\mid \sup_{r<1}\int_0^{2\pi}|u(re^{ix})|^2\dd x<+\infty\right\}.
\]
As a consequence, the eigenvectors $f_n(u;\varepsilon)$ of the Lax operator $L_u(\varepsilon)$ can be interpreted as holomorphic functions on $\D$.

A general bell shaped potential $u$, however, does not identify to a holomorphic function on the complex plane. For this reason, we approximate $u$ by a trigonometric polynomial $u_N$ of order $N$, obtained by truncation of its Fourier series. Then the trigonometric polynomial $u_N$ extends to a meromorphic function on $\C$ with a multiple (but finite) pole at zero:
\[
u_N(z)=\sum_{k=1}^N\widehat{u}(k)z^k+\overline{\widehat{u}(k)}z^{-k}.
\]
Hence, we transfer the eigenvalue problem
\begin{equation}\label{eq:lax}
L_u(\varepsilon) f=\lambda f
\end{equation}
to the complex plane, where the eigenvalue equation~\eqref{eq:lax} becomes an ODE. We solve explicitly this equation in order to get an expression for $f$. However, one needs to check that the obtained formula for $f$ is holomorphic on $\D$. This leads to the vanishing of an Evans function
\begin{equation}\label{eq:evans}
\det(A_N(\lambda;\varepsilon))=0,
\end{equation}
where every coefficient of the $N\times N$ matrix $A_N$ is an oscillatory integral on a prescribed contour, with a fast oscillating phase of the form $S_{\lambda}(z)/\varepsilon$. The leading order of the integral is prescribed by the stationary points of the phase. Using the steepest descent method, we deduce an equation for $\lambda$ to be an actual eigenvalue. It turns out that the stationary points of the phase on $\partial\D$ are the antecedents of $u$ by $\lambda$, so that one can make a link with the distribution function $F$ defined in~\eqref{def:F}.

To deduce Theorem~\ref{thm:zero}, we rely on the complete integrability for the Benjamin-Ono equation on the torus established in~\cite{GerardKappeler2019,GerardKappelerTopalov2020}, and generalizes the study of the cosine initial data from part~4 in~\cite{bo_zero}. 

\paragraph{Plan of the paper} In Section~\ref{sec:eigenvalue_equation} we establish an equation of the form~\eqref{eq:evans} characterizing the eigenvalues of the Lax operator associated to a trigonometric polynomial. In Section~\ref{sec:contours}, we deform the contours involved in this equation in order to apply the method of steepest descent, then we determine an asymptotic expansion of the Lax eigenvalues and establish Theorem~\ref{thm:lax_u_trigo}. Then, in Section~\ref{sec:zero}, we determine the weak limit of solutions of~\eqref{eq:bo} as $\varepsilon\to 0$ using the asymptotic expansion of the Lax eigenvalues. Finally, we generalize this result from trigonometric polynomials to general bell shaped initial data in Section~\ref{sec:bell_shaped} to get Theorem~\ref{thm:zero}.

In Appendix~\ref{appendix:phase} we show that the phase constants of an even initial data are multiples of~$\pi$.
Appendix~\ref{sec:trigonometric_approx}  is devoted to properties of the trigonometric approximation of a bell shaped initial data, from which we deduce Corollary~\ref{thm:lax_u}. 

\paragraph{Acknowledgments}  The author would like to warmly thank Patrick Gérard for providing a proof of Theorem~\ref{thm:evans} and for useful discussions about this problem. 

\section{Eigenvalue equation for trigonometric polynomials}\label{sec:eigenvalue_equation}

In this part, we consider the eigenvalue equation~\eqref{eq:lax}
when $u$ is a real-valued trigonometric polynomial of order $N$ with zero mean
\[
u(x)=\sum_{k=-N}^Nc_ke^{ikx}.
\]
Since $u$ has zero mean, then $c_0=0$, moreover, since $u$ is real-valued, we have $c_{-k}=\overline{c_k}$, finally, one can assume that $c_N\neq 0$.  Using the spatial translation invariance of equation~\eqref{eq:bo}, one can moreover assume that $c_N>0$ (we do not assume that $u$ is bell shaped here).

We first establish the eigenvalue equation, then we study the stationary points of the phase when $-\lambda$ has either zero or two antecedents by $u$ on $\T$.

\subsection{Evans function and eigenvalue equation}

In  this part, we state the eigenvalue equation in term of the vanishing of the Evans function $\det( A(\lambda;\varepsilon))$.
The coefficients of the matrix $ A(\lambda;\varepsilon)$ are oscillatory integrals on the following contours.

\begin{mydef}[Contours]\label{def:contours}
For $1\leq k\leq N$, we denote $\theta_k:=\frac{(2k-1)\pi}{N}$.
\begin{itemize}
\item For $1\leq k\leq N$, $\Gamma_k$ is the closed contour made of the juxtaposition of the segment $[0,e^{i\theta_k}]$, the arc of circle of radius $1$ where $\arg(\zeta)$ varies from $\theta_k$ to $\theta_{k+1}$, and the segment $[e^{i\theta_{k+1}},0]$.

\item The contour $\Gamma_N^-$ is the juxtaposition of the segment $[0,e^{i\theta_N}]$ and the arc of circle of radius~$1$ where $\arg(\zeta)$ varies from $\theta_N$ to $2\pi$.

\item The contour $\Gamma_N^+$ is the juxtaposition the arc of circle of radius $1$ where $\arg(\zeta)$ varies from $0$ to $\theta_1$ and of the segment $[e^{i\theta_1},0]$.
\end{itemize}
We orient these contours counterclockwise, see Figure~\ref{fig:contours}.
\end{mydef}

\begin{figure}
\begin{center}
\begin{tikzpicture}
\draw[->] (-2.5,0)--(2.5,0);
\draw[->] (0,-2.5)--(0,2.5);
\draw (2,0) arc (0:360:2);
\begin{scope}[very thick,decoration={
    markings,
    mark=at position 0.5 with {\arrow{>}}}
    ] 
\draw[purple, densely dashed, line width=2,postaction={decorate}] (2,0) arc (0:30:2) -- +(30+180:2);
\draw[violet, line width=2,postaction={decorate}] (0,0)-- (0,2) arc (90:150:2) -- +(-30:2);
\end{scope}
\begin{scope}[very thick,decoration={
    markings,
    mark=at position 0.5 with {\arrow{<}}}
    ] 
\draw[blue, line width=2,postaction={decorate}] (2,0) arc (0:-30:2) -- +(-30+180:2);
\end{scope}

\draw (2.4,0.6) node{{\color{purple}$\Gamma_N^+$}};
\draw (2.4,-0.6) node{{\color{blue}$\Gamma_N^-$}};
\draw (1.8,-1.4) node{{$\theta_N$}};
\draw (1.8,1.4) node{{$\theta_1$}};
\draw (0.3,2.3) node{{$\theta_k$}};
\draw (-2.1,1.3) node{{$\theta_{k+1}$}};

\draw (-1,2.2) node{{\color{violet}$\Gamma_k$}};
\draw (0,0) node[below left]{$0$};
\draw (2,0) node[below left]{$1$};
\end{tikzpicture}
\end{center}
\caption{Contours of the eigenvalue equation}\label{fig:contours}
\end{figure}
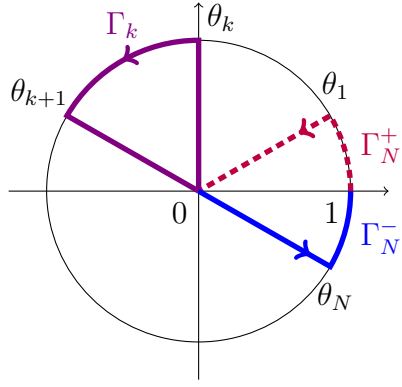

We extend the function $u$ as a meromorphic function on the complex plane with only one multiple pole at $0$: for $z\in\C\setminus\{0\}$,
\[
u(z):=\sum_{k=-N}^Nc_kz^k.
\]
Let us define $Q$ in $\C\setminus\{0\}$ by the formula
\begin{equation}\label{def:Q}
Q(z):=\sum_{k=1}^N\left(-c_k\frac{z^k}{k}+\overline{c_k}\frac{z^{-k}}{k}\right),
\end{equation}
so that $-zQ'(z)=u(z)$.
The coefficients of the matrix $ A(\lambda;\varepsilon)$ are the following oscillatory integrals.
\begin{mydef}[Oscillatory integrals]\label{def:A}
For $1\leq k\leq N-1$ and $1\leq \ell\leq N$, we define
\[
A_{k,\ell}:=\int_{\Gamma_k}e^{Q(\zeta)/\varepsilon}\zeta^{-\ell-\lambda/\varepsilon}\frac{\dd\zeta}{\zeta},
\]
\[
A_{N,\ell}^{\pm}:=\int_{\Gamma_N^{\pm}}e^{Q(\zeta)/\varepsilon}\zeta^{-\ell-\lambda/\varepsilon}\frac{\dd\zeta}{\zeta},
\]
\[
A_{N,\ell}:=A_{N,\ell}^++e^{-2i\pi\lambda/\varepsilon}A_{N,\ell}^-.
\]
Let $A(\lambda;\varepsilon)=(A_{k,\ell})_{1\leq k,\ell\leq N}$ be the $N\times N$ matrix with coefficients $A_{k,\ell}$.
\end{mydef}


We show that $\lambda$ is an eigenvalue of the Lax operator $L_u(\varepsilon)$ if and only if the Evans function $\det(A(\lambda;\varepsilon))$ vanishes.
\begin{thm}[Eigenvalue equation]\label{thm:evans}
Let $u$ be a real-valued trigonometric polynomial of order $N$ with zero mean. A real number $\lambda$ is an eigenvalue of $L_u(\varepsilon)$ if and only if
\begin{equation}\label{eq:evans1}
\det(A(\lambda;\varepsilon))=0.
\end{equation}
\end{thm}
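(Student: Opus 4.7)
The plan is to pass from the circle to the disk: via the canonical isomorphism $L^2_+(\T)\simeq L^2_+(\D)$, the operator $-i\varepsilon\partial_x$ becomes $\varepsilon z\partial_z$, and writing $\Pi(uf)=uf-(I-\Pi)(uf)$ the eigenvalue equation $L_u(\varepsilon)f=\lambda f$ transforms into an inhomogeneous first-order ODE on $\D$,
\[
\varepsilon z f'(z)-(u(z)+\lambda)f(z)=-\sum_{\ell=1}^N b_\ell z^{-\ell},
\]
where a direct Fourier computation gives $b_\ell:=\sum_{j=0}^{N-\ell}\overline{c_{\ell+j}}\,\widehat f(j)$ for the negative-frequency part of $uf$. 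Using $-zQ'(z)=u(z)$, the integrating factor $e^{Q/\varepsilon}z^{-\lambda/\varepsilon}$ casts this into the exact identity
\[
\frac{d}{dz}\!\left[f(z)e^{Q(z)/\varepsilon}z^{-\lambda/\varepsilon}\right]=-\frac{1}{\varepsilon}\sum_{\ell=1}^N b_\ell\, e^{Q(z)/\varepsilon}z^{-\ell-\lambda/\varepsilon-1},
\]
which is the bridge to the oscillatory integrals $A_{k,\ell}$.

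For the direct implication, fix a nonzero eigenfunction $f$, choose a continuous branch of $z^{-\lambda/\varepsilon}$ along each $\Gamma_k$ and integrate the identity above. Since the leading term of $Q(z)$ at $0$ is $c_Nz^{-N}/N$ (with $c_N=\overline{c_N}>0$) and $\Re(z^{-N})=-r^{-N}$ along the rays $\arg z=\theta_k=(2k-1)\pi/N$, the factor $e^{Q(z)/\varepsilon}$ decays faster than any polynomial as $r\to 0^+$, so the boundary values of $\varphi(z):=f(z)e^{Q(z)/\varepsilon}z^{-\lambda/\varepsilon}$ at both ends of $\Gamma_k$ vanish. This gives $\sum_\ell b_\ell A_{k,\ell}=0$ for $1\le k\le N-1$. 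For $\Gamma_N^\pm$, glue them into a loop from the origin along $\theta_N$ to the origin along $\theta_1$ passing around the positive real axis, and track the branch continuously: crossing the cut introduces a factor $e^{-2\pi i\lambda/\varepsilon}$ in front of the $\Gamma_N^-$ piece, producing exactly $\sum_\ell b_\ell A_{N,\ell}=0$. Finally $\mathbf{b}=(b_1,\dots,b_N)$ must be nonzero, because if $\mathbf{b}=0$ then $f$ would be proportional to $z^{\lambda/\varepsilon}e^{-Q/\varepsilon}$, which is not holomorphic at $0$ due to the essential singularity of $e^{-Q/\varepsilon}$. Hence $A(\lambda;\varepsilon)\mathbf{b}=0$ with $\mathbf{b}\neq 0$, so $\det A(\lambda;\varepsilon)=0$.

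For the converse, given a nonzero $\mathbf{b}\in\ker A(\lambda;\varepsilon)$, define
\[
f(z)=-\frac{z^{\lambda/\varepsilon}e^{-Q(z)/\varepsilon}}{\varepsilon}\int_{\gamma(z)}\sum_{\ell=1}^N b_\ell\, e^{Q(\zeta)/\varepsilon}\zeta^{-\ell-\lambda/\varepsilon-1}\,d\zeta,
\]
with $\gamma(z)$ any path from $0$, approached along some ray $\arg\zeta=\theta_k$, to $z\in\D$; the super-exponential decay of $e^{Q/\varepsilon}$ at $0$ along the rays makes the integral convergent. The $N-1$ relations $\sum_\ell b_\ell A_{k,\ell}=0$ ensure that the sector-by-sector definitions glue into a single-valued holomorphic function on $\D\setminus\{0\}$, while the relation $\sum_\ell b_\ell A_{N,\ell}=0$ cancels the monodromy of $z^{-\lambda/\varepsilon}$ across the positive real axis; the same super-exponential decay cancels both $z^{\lambda/\varepsilon}$ and $e^{-Q/\varepsilon}$ as $z\to 0$, so $f$ extends holomorphically through the origin. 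Once $f\in L^2_+(\D)$ is obtained, the ODE itself shows that $u(z)f(z)-\sum_\ell b_\ell z^{-\ell}=\varepsilon zf'(z)-\lambda f(z)$ is holomorphic on $\D$, which forces $(I-\Pi)(uf)=\sum_\ell b_\ell z^{-\ell}$ and yields $L_u(\varepsilon)f=\lambda f$. I expect the main obstacle to be precisely this converse gluing step: verifying that the $N$ kernel conditions on $\mathbf{b}$ correspond exactly to the single-valuedness of $f$ on $\D\setminus\{0\}$ and control the monodromy, and that the interplay between the apparent singularities $z^{\lambda/\varepsilon}$, $e^{-Q/\varepsilon}$ and the super-exponential decay of the primitive makes the origin a removable singularity rather than merely an integrable one.
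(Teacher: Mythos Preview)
Your overall architecture matches the paper's: transform to an ODE on $\D$, use the integrating factor $e^{Q/\varepsilon}z^{-\lambda/\varepsilon}$, and for the direct implication integrate the exact differential $d\varphi$ over the contours $\Gamma_k$. That part is essentially the paper's argument, and your observation that $\mathbf b\neq 0$ (else $f$ would be a nonzero multiple of $z^{\lambda/\varepsilon}e^{-Q/\varepsilon}$, which has an essential singularity at $0$) is correct.

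The real gap is in the converse, exactly where you suspected, but not for the reason you gave. Your claim that ``the same super-exponential decay cancels both $z^{\lambda/\varepsilon}$ and $e^{-Q/\varepsilon}$ as $z\to 0$'' is wrong as stated. Along the rays $\arg z=\theta_k=(2k-1)\pi/N$ you have $\Re Q(z)\to -\infty$, so the prefactor $e^{-Q(z)/\varepsilon}$ \emph{blows up} super-exponentially, while the integral tends to $0$ super-exponentially; this is a genuine $\infty\cdot 0$ situation. Worse, on the complementary rays $\arg z=2k\pi/N$ one has $\Re Q(z)\to +\infty$, so now $e^{-Q(z)/\varepsilon}\to 0$ but the path of integration must cross a region where the integrand is huge. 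Neither direction gives boundedness for free. The paper handles this in Proposition~\ref{prop:fk} by a three-step argument: first Taylor-expand $f_k$ to peel off a polynomial part (equation~\eqref{eq:fk}); then show boundedness separately on the sub-sector $|\arg z-\theta_k|\le \pi/(2N)-\delta$ (using monotonicity of $t\mapsto\Re Q(tz)$ along the ray) and on the sub-sector $\pi/(2N)+\delta\le|\arg z-\theta_k|\le\pi/N$ (using the opposite monotonicity); finally invoke the Phragm\'en--Lindel\"of principle to cover the remaining thin sectors $|\arg z-\theta_k-\pi/(2N)|\le\delta$, where one only has the crude a~priori bound $|f_k(z)|\le Ce^{A|z|^{-N}}$. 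Without this, you have not shown that your $f$ is even bounded near $0$, let alone holomorphic there.

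A secondary point: your monodromy argument for the $N$-th row is morally right but imprecise. The paper does not simply ``track the branch''; it obtains the $N$-th equation by integrating the factorized ODE~\eqref{eq:f_factorized} over $\partial\D$ (equation~\eqref{eq:syst2}), substitutes the integral representation of $f(1)$, and then in the converse treats the cases $\lambda\notin\Z$ and $\lambda\in\Z$ separately---in the former case exhibiting an explicit $2\pi$-periodic solution $g$, in the latter deducing $f(r+i0)=f(r-i0)$ from the vanishing of an extra contour integral. Your sketch conflates these two mechanisms.
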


\subsection{Proof of the eigenvalue equation}

Let us consider an eigenfunction $f$ of $L_u(\varepsilon)$ with eigenvalue $\lambda$: $L_u(\varepsilon)f=\lambda f$,
where we recall that
\[
L_u(\varepsilon)f=-i\varepsilon\partial_x f-\Pi(uf)= -i\varepsilon\partial_x f-uf+(\mathrm{Id}-\Pi)(uf).
\]
We note that $L_u(\varepsilon)f=L_{u/\varepsilon}(1)(\varepsilon f)$. Hence, if $L_u(\varepsilon)f=\lambda f$, then $L_{u/\varepsilon}(1)(\varepsilon f)=\frac{\lambda}{\varepsilon}(\varepsilon f)$. Up to replacing $f$ by $\varepsilon f$, $u$ by $\frac{u}{\varepsilon}$ and $\lambda$ by $\frac{\lambda}{\varepsilon}$ at the end of the argument, we therefore assume that $\varepsilon=1$.  

Since $f\in L^2_+(\T)$, we express $f$ as a holomorphic function on the unit disk $\D$. The eigenvalue equation becomes: for every $0<|z|<1$,
\[
\varepsilon zf'(z)-\left(\sum_{k=1}^Nc_kz^k+\overline{c_k}z^{-k}\right)f(z)+\sum_{k=1}^N\overline{c_k}z^{-k}\left(\sum_{j=0}^{k-1}\frac{f^{(j)}(0)}{j!}z^j\right)=\lambda f(z).
\]
This implies the existence of complex numbers $v_0,\dots,v_N$ such that
\begin{equation}\label{eq:f}
zf'(z)-\left(\sum_{k=1}^Nc_kz^k+\overline{c_k}z^{-k}\right)f(z)-\lambda f(z)=\sum_{j=1}^N v_jz^{-j}.
\end{equation}
Conversely, if $f$ is a holomorphic function near the origin solution to~\eqref{eq:f}, then the constants $v_1,\dots, v_N$ are uniquely expressed in terms of $f(0),\dots,f^{(N-1)}(0)$ by the triangular system
\[
v_j=\sum_{k=j+1}^N\overline{c_k} \frac{f^{(k-j)}(0)}{(k-j)!}.
\]
Moreover, if $f$ is a solution to~\eqref{eq:f} on $\D$, and if $f$ is holomorphic on $\D$ and bounded on $\partial \D$, then $f$ has a representative in $L^2_+(\T)$ so that $\lambda$ is an eigenvalue of $L_u(1)$ associated to the eigenfunction $f$. We deduce that the eigenvalues of $L_u$ are the real numbers $\lambda$ such that equation~\eqref{eq:f} admits a nonzero solution $f\in L^2_+(\D)$, for some complex numbers $v_1,\dots, v_N$.

We first observe that equation~\eqref{eq:f} is a linear differential equation of order $1$. On the simply connected open subset
\[
\Omega=\C\setminus[0,+\infty),
\]
this equation can be solved explicitly and the solutions are completely characterized by their value at one given point of $\Omega$. We choose the determination of $\mathrm{arg}(z)\in(0,2\pi)$ for $z\in\Omega$. Note that in the rest of the argument, one could choose the determination of the argument to be defined on $\C\setminus e^{i\theta}\R_+$ for any $\theta\in\T$ up to rotating everything by the angle $\theta$ in the proof, including the contours $\Gamma_k$. We get that~\eqref{eq:f} is equivalent to the equation
\begin{equation}\label{eq:f_factorized}
\frac{\dd}{\dd z}\left[z^{-\lambda}e^{Q(z)}f(z)\right]=z^{-\lambda}e^{Q(z)}\sum_{j=1}^Nv_jz^{-j-1},
\end{equation}
where we recall~\eqref{def:Q}
\[
Q(z)=\sum_{k=1}^N\left(-c_k\frac{z^k}{k}+\overline{c_k}\frac{z^{-k}}{k}\right).
\]
 Since we have assumed that $c_N>0$, we know that for every $1\leq k\leq N$, the real part of $Q$ around the angle $\theta_k=\frac{(2k-1)\pi}{N}$ has the following asymptotic expansion as $r\to 0^+$:
\begin{equation}\label{eq:ReQ_theta}
\mathrm{Re}[Q(re^{i\theta_k})]\sim -\frac{c_N}{Nr^N}.
\end{equation}
Moreover, for $z\to 0$ in the angular sector 
\[
|\arg(z)-\theta_k|\leq \frac{\pi}{2N}-\delta,
\quad\delta\in\left(0,\frac{\pi}{2N}\right),
\]
there exists $a_N(\delta)>0$ such that
\begin{equation}\label{eq:cv_theta_k}
\mathrm{Re}[Q(z)]\leq -\frac{a_N(\delta)}{|z|^N}.
\end{equation}

Fix $1\leq k\leq N$. For every $z\in\Omega$, we choose a path $\gamma_{k,z}$ joining $0$ to $z$ in $\Omega$ such that for $t\geq 0$ small enough, we have
\[
\gamma_{k,z}(t)=te^{i\theta_k}.
\]
Thanks to~\eqref{eq:cv_theta_k}, we get a well-defined and holomorphic solution $f_k$ of~\eqref{eq:f} on $\Omega$ as follows:
\begin{equation}\label{eq:fk_vj}
f_k(z)=z^{\lambda}e^{-Q(z)}\int_{\gamma_{k,z}}e^{Q(\zeta)}\sum_{j=1}^Nv_j\zeta^{-j-\lambda}\frac{\dd\zeta}{\zeta}.
\end{equation}

\begin{prop}[Boundedness of $f_k$]\label{prop:fk}
The function $f_k$ is bounded on the angular sector
\[
A_k=\left\{z\in\Omega\mid |z|\leq 1,\quad \mathrm{arg}(z)\in\left[\theta_k-\frac{\pi}{N},\theta_k+\frac{\pi}{N}\right]\right\}.
\]
\end{prop}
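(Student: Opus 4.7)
Boundedness on $A_k\cap\{|z|\geq r_0\}$ is immediate from continuity of $f_k$ on this compact subset of $\Omega$, so I focus on $z\in A_k$ with $|z|$ small. For such $z$ I would choose $\gamma_{k,z}$ to be the concatenation of the segment $[0,|z|e^{i\theta_k}]$ with the arc $|z|e^{i\theta}$, $\theta\in[\theta_k,\arg(z)]$; this is a valid path since by Cauchy's theorem the integral in~\eqref{eq:fk_vj} is independent of the path in $\Omega$ (given the prescribed approach direction at $0$). The first key observation is a monotonicity property along $\gamma_{k,z}$: using the leading behavior $Q(\zeta)\sim c_N\zeta^{-N}/N$ together with~\eqref{eq:ReQ_theta}, and the monotonicity of $\cos(N\theta)$ between its extrema at $\theta_k$ and $\theta_k\pm\pi/N$, I would verify that $\mathrm{Re}(Q)$ increases from $-\infty$ at $0$ up to $\mathrm{Re}(Q(z))$ at $z$, so that $|e^{Q(\zeta)}|\leq |e^{Q(z)}|$ throughout $\gamma_{k,z}$.

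The second key step is a single integration by parts using the identity $e^{Q(\zeta)}=Q'(\zeta)^{-1}\frac{d}{d\zeta}[e^{Q(\zeta)}]$ with $Q'(\zeta)=-u(\zeta)/\zeta$. Writing $g(\zeta)=\sum_{j=1}^N v_j\zeta^{-j-\lambda-1}$, this gives
\[
\int_{\gamma_{k,z}}e^{Q}g\,d\zeta=\left[\frac{e^{Q}g}{Q'}\right]_{0}^{z}-\int_{\gamma_{k,z}}e^{Q}\Bigl(\frac{g}{Q'}\Bigr)'d\zeta.
\]
The boundary term at $0$ vanishes thanks to the super-polynomial decay of $|e^{Q(\zeta)}|$ along $\theta_k$. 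The boundary term at $z$, once multiplied by the prefactor $z^{\lambda}e^{-Q(z)}$ in~\eqref{eq:fk_vj}, contributes $-\sum_j v_j z^{-j}/u(z)$ to $f_k(z)$; since $u(z)\sim c_Nz^{-N}$ and $1\leq j\leq N$, this is $\sim -\sum_j v_jz^{N-j}/c_N$, which remains bounded as $z\to 0$.

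The remainder integral involves $(g/Q')'$, whose singular order at $0$ is only $\lambda+1$ (the leading singular parts of $g$ and $Q'$ cancel in the quotient, reducing $\lambda+N+1$ by $N$). On the arc, bounding $|e^{Q(\zeta)}|\leq |e^{Q(z)}|$ and using arc length $O(|z|)$ produces a contribution of size $O(|z|^{-\lambda}|e^{Q(z)}|)$, which becomes $O(1)$ after multiplying by the prefactor $z^{\lambda}e^{-Q(z)}$. On the segment, the polynomial singularity of $(g/Q')'$ is dominated by the super-polynomial decay $|e^{Q(se^{i\theta_k})}|\leq e^{-c_N/(Ns^N)+O(s^{-N+1})}$; a Laplace-type estimate bounds the segment contribution by $O(|z|^{-\lambda}|e^{Q(|z|e^{i\theta_k})}|)$, and combining with $|e^{Q(|z|e^{i\theta_k})}|\leq |e^{Q(z)}|$ from the monotonicity step, this also becomes $O(1)$ after multiplication by the prefactor.

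\textbf{Main obstacle.} The most delicate part is the estimate of the remainder integral on the segment $[0,|z|e^{i\theta_k}]$: the naive bound $|e^{Q(\zeta)}|\leq |e^{Q(z)}|$ combined with the order-$(\lambda+1)$ singularity of $(g/Q')'$ would yield a non-integrable contribution. One must instead exploit the pointwise super-polynomial decay $|e^{Q(se^{i\theta_k})}|\sim e^{-c_N/(Ns^N)}$ to dominate the singularity. A clean way is to check that for $|z|$ small the function $s\mapsto e^{-c_N/(Ns^N)}s^{-\lambda-1}$ is monotone increasing on $(0,|z|]$, reducing the estimate to its value at $s=|z|$ and ensuring the combined bound remains uniform on all of $A_k$, not merely on the central ray $\theta_k$.
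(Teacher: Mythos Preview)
Your approach is appealing in that it tries to handle all of $A_k$ with a single contour and a single integration by parts, but there is a genuine gap in the monotonicity step. You verify $\Re Q(\zeta)\leq \Re Q(z)$ along the arc only at the level of the leading term $c_N\zeta^{-N}/N$, using that $\cos(N\theta)$ is monotone between its extrema at $\theta_k$ and $\theta_k\pm\pi/N$. But $\partial_\theta \Re Q(re^{i\theta})=\Im u(re^{i\theta})$, and its leading contribution $c_N r^{-N}\sin(N(\theta-\theta_k))$ vanishes at both arc endpoints; the sub-leading corrections, generically of size $r^{-N+1}$ when $c_{N-1}\neq 0$, then decide the sign of the derivative there. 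In particular one cannot conclude $\Re Q(|z|e^{i\theta_k})\leq \Re Q(z)$: taking $\arg z=\theta_k+O(|z|)$ one finds the difference can be of order $|z|^{-N+2}$ with the wrong sign, so that the factor $e^{\Re Q(|z|e^{i\theta_k})-\Re Q(z)}$ appearing in your segment estimate blows up like $e^{C|z|^{-N+2}}$ for $N\geq 3$. A similar failure occurs when $\arg z$ is near the sector boundary $\theta_k\pm\pi/N$. The concern you flag in your ``Main obstacle'' paragraph (controlling the segment integral) is real but secondary; the uncontrolled quantity is the comparison between $\Re Q$ at the kink of your path and at the endpoint $z$.

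The paper does not attempt a single contour. It first subtracts a Taylor polynomial so that the remainder integrand $e^{Q(\zeta)}\zeta^{M-N-\lambda}P_M(\zeta)$ is regular near $0$ for large $M$ (this is essentially your IBP iterated), but the crucial extra idea is to split $A_k$: in the inner sector $|\arg z-\theta_k|\leq\frac{\pi}{2N}-\delta$ the radial segment $[0,z]$ enjoys genuine (not merely leading-order) monotonicity of $\Re Q$, while in the outer sector $|\arg z-\theta_k|\geq\frac{\pi}{2N}+\delta$ one has $\Re Q(z)>0$ and a path through $\partial\D$ keeps $\Re Q\leq\Re Q(z)$ throughout. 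The transition strip around $|\arg z-\theta_k|=\frac{\pi}{2N}$, where neither monotonicity holds uniformly in $\delta$, is then closed by the Phragm\'en--Lindel\"of principle, using only the a priori growth $|f_k(z)|\leq Ce^{A|z|^{-N}}$. Your single-contour argument has no substitute for this last step.
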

\begin{proof}
In the proof, we first Taylor expand $f_k$ around zero. Then we show that $f_k$ is bounded in the angular sector $|\arg(z)-\theta_k|\leq \frac{\pi}{2N}-\delta$ for $\delta>0$. Finally we show that $f_k$ is also bounded in the angular sector  $\frac{\pi}{2N}+\delta\leq |\arg(z)-\theta_k|\leq \frac{\pi}{N}$. We conclude to the boundedness for the missing angles by using the Phragmen-Lindelöf principle.

$\bullet$ We first observe that for every $M>N$, there exist coefficients $b_0,\dots,b_M$ and a polynomial function $P_M$ such that
\[
e^{Q(z)}\sum_{j=1}^Nv_jz^{-j-\lambda-1}=\frac{\dd}{\dd z}\left(\sum_{\ell=0}^Mb_{\ell}z^{\ell-\lambda}e^{Q(z)}\right)+z^{M-N-\lambda}P_M(z)e^{Q(z)}.
\]
Indeed, the coefficients $b_0,\dots,b_M$ are the solutions of a triangular system when plugged into~\eqref{eq:f}. Hence $f_k$ has the following expression:
\begin{equation}\label{eq:fk}
f_k(z)=\sum_{\ell=0}^Mb_{\ell}z^{\ell}+z^{\lambda}e^{-Q(z)}\int_{\gamma_{k,z}}e^{Q(\zeta)}\zeta^{M-N-\lambda}P_M(\zeta)\dd \zeta.
\end{equation}

$\bullet$ We first assume  that there exists $\delta>0$ such that
\[
|\arg(z)-\theta_k|\leq\frac{\pi}{2N}-\delta.
\]
Then we can choose the path $\gamma_{k,z}$  to stay inside the sector $|\arg(\zeta)-\theta_k|\leq\frac{\pi}{2N}-\delta$. In this case, estimate~\eqref{eq:cv_theta_k} combined with the holomorphy of $f_k$ implies that one can transform the contour $\gamma_{k,z}$ into the segment $[0,z]$ without changing the value of the integral in~\eqref{eq:fk}. Moreover, when $|z|$ is small enough in this angular sector, there exists $a_N(\delta)>0$ such that
\[
\mathrm{Re}[\zeta Q'(\zeta)]\geq  \frac{a_N(\delta)}{|\zeta|^N},
\]
therefore the function $t\in(0,1]\mapsto\mathrm{Re}[Q(tz)]$ is increasing  and $\mathrm{Re}[Q(\zeta)-Q(z)]\leq 0$ on $[0,z]$. We deduce that when $M$ is large enough, the right hand side of~\eqref{eq:fk} is bounded in the angular sector \(
|\mathrm{\arg(z)}-\theta_k|\leq\frac{\pi}{2N}-\delta.
\)

$\bullet$ We now assume that there exists $\delta>0$ such that
\[
\frac{\pi}{2N}+\delta\leq |\arg(z)-\theta_k|\leq\frac{\pi}{N}.
\]
Then one has that for some $a_N(\delta)>0$, when $z$ is in this angular sector,
\begin{equation}\label{eq:ReQ_min}
\mathrm{Re}[Q(z)]\geq \frac{a_N(\delta)}{|z|^N}.
\end{equation}
We choose $\gamma_{k,z}$ to be the juxtaposition of the following three paths:
\begin{enumerate}
\item the segment $[0,e^{i\theta_k}]$;
\item the arc of circle of radius $1$ where $\arg(\zeta)$ goes from $\theta_k$ to $\arg(z)$;
\item the segment $[z/|z|,z]$.
\end{enumerate}
Now, we show that the right hand side of equality~\eqref{eq:fk} is bounded. Given the expression of $f_k(z)$ in~\eqref{eq:fk}, it is enough to prove that this expression is bounded when $z\to 0$.
\begin{enumerate}
\item 
Because of inequality~\eqref{eq:ReQ_min}, one can see that $z^{\lambda}e^{-Q(z)}\to 0$ as $z\to 0$. 
On $[0,e^{i\theta_k}]$, one has seen in~\eqref{eq:ReQ_theta} that $\mathrm{Re}[Q(re^{i\theta_k})]\sim -\frac{c_N}{Nr^N}$ as $r\to 0^+$. In particular the integral $\int_{[0,e^{i\theta_k}]}e^{Q(\zeta)}\zeta^{M-N-\lambda}P_M(\zeta)\dd\zeta$ stays bounded on $[0,e^{i\theta_k}]$.

\item
In the arc of  circle of radius $1$ where $\arg(\zeta)$ goes from $\theta_k$ to $\arg(z)$, there is nothing to prove.

\item 
We now consider the segment $[z/|z|,z]$. Our goal is to show that when $M$ is large enough, the following integral stays bounded as $z\to 0$:
\begin{equation}\label{eq:integral_bdd}
z^{\lambda}e^{-Q(z)}\int_{[z/|z|,z]}e^{Q(\zeta)}\zeta^{M-N-\lambda}P_M(\zeta)\dd\zeta.
\end{equation}
We use the fact that if $|\zeta|\leq 1$ and $\frac{\pi}{2N}+\delta\leq |\arg(\zeta)-\theta_k|\leq\frac{\pi}{N}$, then
\[
\mathrm{Re}[\zeta Q'(\zeta)]\leq-\frac{a_N(\delta)}{|\zeta|^N}
\]
to deduce that the function $t\in[1,|z|^{-1}]\mapsto \mathrm{Re}[Q(tz)]$ is decreasing. As a consequence, for every $\zeta\in [z/|z|,z]$, there holds $\mathrm{Re}[Q(\zeta)-Q(z)]\leq 0$. We conclude that when $M>N+\lambda$, the quantity~\eqref{eq:integral_bdd} is bounded.
\end{enumerate}

$\bullet$ We have shown in the latter steps that  for every $\delta>0$, $f_k$ is bounded on the complement of $|\mathrm{arg}(z)-\theta_k-\frac{\pi}{2N}|\leq\delta$ in $A_k$. Furthermore, we see that there exists $C,A>0$ such that for every $z\in A_k$, 
\[
|f_k(z)|\leq Ce^{A|z|^{-N}}.
\]
Making the change of variable $z\mapsto \frac{1}{z}$, Proposition~\ref{prop:fk} is  now a consequence of the following lemma applied to $\theta=\theta_k+\frac{\pi}{2N}$, $\beta=\frac{\pi}{2N}$ and $\delta>0$.

\begin{lem}
Let $f$ be a holomorphic function in a neighborhood of the sector
\[
\Sigma_{\beta}:=\{z\in\C\mid |z|>1,\quad \arg(z)\in[\theta-\beta,\theta+\beta]\}
\]
for some $\beta>0$ and $\theta\in\T$. Assume that there exists $C,A,N>0$ such that the following holds:
\begin{enumerate}
\item for every $z\in\Sigma_{\beta}$, $|f(z)|\leq Ce^{A|z|^N}$;
\item for every $\delta>0$, $f$ is bounded on $\Sigma_{\beta}\setminus \Sigma_{\delta}$. 
\end{enumerate}
Then $f$ is bounded on $\Sigma_{\beta}$.
\end{lem}

The proof of this lemma is as follows. Let $\delta>0$ such that $2\delta N<\pi$. We apply the Phragmen-Lindelöf principle to $f$ on $\Sigma_{\delta}$. Therefore $f$ is bounded on $\Sigma_{\delta}$, hence on $\Sigma_{\beta}$.
\end{proof}

As a consequence of Proposition~\ref{prop:fk}, we have defined $N$ solutions $f_1,\dots,f_N$ of~\eqref{eq:f} on $\Omega$ with the same Taylor expansion around $0$, and such that for every $k$, $f_k$ is bounded on $A_k$. If there exists a solution to~\eqref{eq:f} which is holomorphic near the origin, then every function $f_k$ should coincide with $f$ on $A_k$. This implies the following system of $N-1$ equations
\begin{equation}\label{eq:syst1}
f_k(e^{i(\theta_k+\frac{\pi}{N})})=f_{k+1}(e^{i(\theta_k+\frac{\pi}{N})}), \quad 1\leq k\leq N-1.
\end{equation}
For $1\leq k\leq N-1$, we note that from the integral expression~\eqref{eq:fk_vj}, we have
\[
f_k(e^{i(\theta_k+\frac{\pi}{N})})-f_{k+1}(e^{i(\theta_k+\frac{\pi}{N})})
	=e^{i\lambda(\theta_k+\frac{\pi}{N})-Q(e^{i(\theta_k+\frac{\pi}{N})})} \int_{\Gamma_k}e^{Q(\zeta)}\sum_{j=1}^Nv_j\zeta^{-j-\lambda}\frac{\dd\zeta}{\zeta},
\]
where we recall that the contours $\Gamma_k$ have been introduced in Definition~\ref{def:contours}. The equations~\eqref{eq:syst1} mean that all these quantities cancel, meaning that all the integrals on $\Gamma_k$ cancel:
\begin{equation}\label{eq:syst1_bis}
\int_{\Gamma_k}e^{Q(\zeta)}\sum_{j=1}^Nv_j\zeta^{-j-\lambda}\frac{\dd\zeta}{\zeta}=0.
\end{equation}
Given expression~\eqref{eq:fk_vj}, this is a system of $N-1$ linear equation on $v_1,\dots,v_N$.

Moreover, integrating~\eqref{eq:f_factorized} on the circle of radius $1$, we also get the equation
\begin{equation}\label{eq:syst2}
f(1)(e^{-2i\pi\lambda}-1)e^{Q(1)}=\int_{\partial \D}z^{-\lambda}e^{Q(z)}\sum_{j=1}^Nv_jz^{-j-1}\dd z.
\end{equation}
Replacing $f(1)$ by $f_1(1+i0)$, we get a new linear equation on $v_1,\dots,v_N$.

\begin{prop}
The linear system of $N$ equations~\eqref{eq:syst1_bis} and~\eqref{eq:syst2} admits a nontrivial solution $(v_1,\dots,v_N)$ if and only if $\lambda$ is an eigenvalue of $L_u(1)$. 
\end{prop}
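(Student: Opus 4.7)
The plan is to interpret the two families of conditions as follows: \eqref{eq:syst1_bis} expresses the patching of the $N$ local solutions $f_k$ into a single function on the cut plane $\Omega$, while \eqref{eq:syst2} expresses its single-valuedness across the branch cut $[0,+\infty)$. I carry out both directions of the equivalence with $\varepsilon = 1$ as in the preceding derivation.

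For the \emph{necessary} direction, I start from an eigenfunction $f\in L^2_+(\T)$, holomorphic on $\D$, and define $v_1,\ldots,v_N$ from $f(0),\ldots,f^{(N-1)}(0)$ so that \eqref{eq:f} holds. On each angular sector $A_k$, both $f$ and $f_k$ solve the same affine equation \eqref{eq:f}, so their difference is of the form $C_k z^{\lambda} e^{-Q(z)}$ for a scalar $C_k$. Along the direction $\arg z = \theta_k$, the estimate \eqref{eq:ReQ_theta} shows that $z^{\lambda}e^{-Q(z)}$ blows up as $z \to 0$, whereas $f$ is holomorphic at $0$ and $f_k$ is bounded on $A_k$ by Proposition~\ref{prop:fk}; this forces $C_k = 0$ and hence $f \equiv f_k$ on $A_k$. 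Evaluating at the shared boundary point $e^{i(\theta_k + \pi/N)} \in A_k \cap A_{k+1}$ gives \eqref{eq:syst1_bis}, and integrating \eqref{eq:f_factorized} over $\partial\D$ together with the single-valuedness of $f$ at $z=1$ gives \eqref{eq:syst2}. Nontriviality of $v$ comes from the observation that $v = 0$ would force $f$ to solve the homogeneous equation, hence $f = C\, z^{\lambda} e^{-Q(z)}$; this has an essential singularity at $0$ and thus cannot be a nonzero element of $L^2_+(\T)$.

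For the \emph{sufficient} direction, I start with a nontrivial $(v_1,\ldots,v_N)$ satisfying the system, construct the $f_k$ via \eqref{eq:fk_vj}, and argue as above that $f_k - f_{k+1}$ is a scalar multiple of $z^{\lambda} e^{-Q(z)}$; the equations \eqref{eq:syst1_bis} force these scalars to vanish, so the $f_k$ glue into a single function $F$ on $\Omega$, bounded on $\overline{\D} \cap \Omega$ by the union of the bounds from Proposition~\ref{prop:fk}. The key step is to use \eqref{eq:syst2} to extend $F$ across the cut. Integrating \eqref{eq:f_factorized} applied to $F$ around $\partial\D$ gives
\[
F(1-i0)\, e^{-2i\pi\lambda}e^{Q(1)} - F(1+i0)\, e^{Q(1)} = \int_{\partial\D} z^{-\lambda} e^{Q(z)} \sum_{j=1}^N v_j z^{-j-1} \dd z,
\]
whose right-hand side coincides with that of \eqref{eq:syst2}. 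Comparing (with $f(1) = F(1+i0)$) yields $F(1+i0) = F(1-i0)$, and a monodromy argument on the ODE (whose only singularity in $\C$ is at $0$) promotes this pointwise identity into full single-valuedness of $F$ on $\D\setminus\{0\}$. Riemann's removable singularity theorem then produces a holomorphic extension $f$ on $\D$, bounded on $\overline{\D}$, hence $f \in L^2_+(\T)$; the triangular relation between $v$ and the first $N$ Taylor coefficients of $f$, invertible because $c_N > 0$, ensures $f \neq 0$. By construction $f$ satisfies \eqref{eq:f}, so $\lambda$ is an eigenvalue of $L_u(1)$.

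The main obstacle I anticipate is the monodromy step in the sufficient direction: turning the pointwise identity $F(1+i0) = F(1-i0)$ into global single-valuedness on $\D \setminus \{0\}$ requires a careful use of the regular singular structure of \eqref{eq:f} at $0$, and the exceptional case $\lambda \in \Z$ (where $e^{-2i\pi\lambda}-1 = 0$ and the direct comparison degenerates) must be treated separately, for instance by noting that $z^{\lambda} e^{-Q(z)}$ is then already single-valued on $\C\setminus\{0\}$ so the monodromy is trivial from the outset. Throughout, the angular boundedness bounds from Proposition~\ref{prop:fk} are indispensable: they are exactly what rules out the homogeneous solution $z^{\lambda} e^{-Q(z)}$ at every matching step.
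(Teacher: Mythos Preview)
Your proposal is correct and, for the sufficient direction, takes a somewhat different and more unified route than the paper.

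For the forward direction (eigenvalue $\Rightarrow$ nontrivial solution), your argument is essentially the one the paper sketches before stating the proposition: identify $f$ with each $f_k$ on $A_k$ by ruling out the homogeneous part, then read off \eqref{eq:syst1_bis} and \eqref{eq:syst2}. Your nontriviality argument for $v$ is also fine.

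For the converse, the paper splits into two cases. When $\lambda\notin\Z$ it writes down an explicit single-valued solution $g$ on $\C\setminus\{0\}$ via an integral over $[0,2\pi]$, checks from \eqref{eq:syst2} that $g(1)=f(1)$, and concludes $f=g$ by uniqueness. When $\lambda\in\Z$ it instead combines \eqref{eq:syst2} with the equations \eqref{eq:syst1_bis} to show that the integral over the full loop $\Gamma_N$ vanishes, and from the contracted contours $\Gamma_N(r)$ deduces $f(r+i0)=f(r-i0)$ for all $0<r\le 1$ directly. Your approach replaces both cases by a single step: from your identity
\[
F(1-i0)\,e^{-2i\pi\lambda}e^{Q(1)} - F(1+i0)\,e^{Q(1)} = \text{RHS of \eqref{eq:syst2}} = F(1+i0)(e^{-2i\pi\lambda}-1)e^{Q(1)}
\]
one gets $e^{-2i\pi\lambda}\bigl(F(1-i0)-F(1+i0)\bigr)=0$, hence $F(1+i0)=F(1-i0)$, with no case distinction needed. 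Then uniqueness for the first-order ODE on a neighborhood of $(0,\infty)$ (the coefficients and inhomogeneity being single-valued there) upgrades this to $F(r+i0)=F(r-i0)$ for all $r$, and the removable-singularity argument finishes. So your worry about the integer case is unnecessary: your own computation already covers it, and your suggested backup (trivial monodromy of the homogeneous solution when $\lambda\in\Z$) is also valid but not needed. The paper's explicit formula for $g$ has the minor advantage of giving a concrete representation of the eigenfunction; your argument is shorter and avoids the case split.
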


\begin{proof}
It only remains to prove the converse. Assume that the linear system of $N$ equations~\eqref{eq:syst1} and~\eqref{eq:syst2} admits a nontrivial solution $(v_1,\dots,v_N)$, and let us prove that $\lambda$ is an eigenvalue of $L_u(1)$.

Since the solutions to equation~\eqref{eq:f} in $\Omega$ are unique, we get from~\eqref{eq:syst1} that all the $f_k$ coincide, so that Proposition~\ref{prop:fk} implies that $f$ is bounded on the union of the angular sectors $S_k$, hence on $\Omega\cap\D$.

$\bullet$ We first assume that $\lambda$ is not an integer. Define for $z\neq 0$
\[
g(z):=\frac{ie^{-Q(z)}}{e^{-2i\pi\lambda}-1}\int_0^{2\pi}e^{Q(ze^{i\theta})-i\lambda\theta} \sum_{j=1}^Nv_j z^{-j}e^{-ij\theta}\dd\theta.
\]
Then $g$ is holomorphic on $\C\setminus\{0\}$ and one can check that $g$ satisfies equation~\eqref{eq:f}.

Moreover, the choice of constant term implies that $g(1)=f(1)$, therefore $f=g$ on $\D\cap\Omega$. Moreover, $f$ is bounded on $\D\cap\Omega$, therefore $g$ is bounded on $\D$ by continuity. Hence $g$ is also holomorphic near the origin. We conclude that $\lambda$ is an eigenvalue of $L_u(1)$.

$\bullet$ We now assume that $\lambda$ is an integer. 
Withdrawing the sum of the integrals in~\eqref{eq:syst1_bis} from the integral~\eqref{eq:syst2} on $\partial\D$, which cancels since equation~\eqref{eq:syst2} becomes
\[
0=\int_{\partial \D}z^{-\lambda}e^{Q(z)}\sum_{j=1}^Nv_jz^{-j-1}\dd z,
\]
we deduce that
\[
\int_{\Gamma_N}e^{Q(\zeta)}\sum_{j=1}^Nv_j\zeta^{-j-\lambda}\frac{\dd\zeta}{\zeta}=0.
\]
For $0<r<1$, we consider the contour $\Gamma_N(r)$ defined the closed contour made of the juxtaposition of the segment $[0,re^{i\theta_k}]$, the arc of circle of radius $r$ where $\arg(\zeta)$ varies from $\theta_k$ to $\theta_{k+1}$, and the segment $[re^{i\theta_{k+1}},0]$. Since the above integrand is holomorphic outside the origin, the integral on $\Gamma_N$ is equal to the one on $\Gamma_N(r)$:
\[
\int_{\Gamma_N(r)}e^{Q(\zeta)}\sum_{j=1}^Nv_j\zeta^{-j-\lambda}\frac{\dd\zeta}{\zeta}=0.
\]
This implies that $f(r+i0)=f(r-i0)$ for $0<r\leq 1$. Since $f$ solves~\eqref{eq:f} on $\Omega$, we deduce that $f$ is holomorphic outside the origin. Moreover, since $f$ is bounded on $\Omega\cap\D$, then $f$ is also holomorphic near the origin. We conclude that $\lambda$ is an eigenvalue of $L_u(1)$.
\end{proof}

Hence, we have seen that $\lambda$ is an eigenvalue of $L_u(1)$ if and only if there exists a nontrivial solution $(v_1,\dots,v_N)$ such that~\eqref{eq:syst1_bis} and~\eqref{eq:syst2} hold. 

We now replace $f(1)$ by its integral expression~\eqref{eq:fk_vj} on $\gamma_{1,1}=\Gamma_N^+$ in equation~\eqref{eq:syst2}. Then, removing~\eqref{eq:syst1_bis} for $1\leq k\leq N-1$ on the right hand side of equation~\eqref{eq:syst2}, equation~\eqref{eq:syst2} becomes equivalent to
\[
-(e^{-2i\pi\lambda}-1)\sum_{\ell=1}^NA_{N,\ell}^+v_{\ell}=\sum_{\ell=1}^N (A_{N,\ell}^++A_{N,\ell}^-)v_{\ell}.
\] 
We conclude that the linear system~\eqref{eq:syst1_bis},~\eqref{eq:syst2} is equivalent to the existence of  $V=(v_1,\dots,v_N)$ such that
\[
A(u;1)V=0
\]
where we recall that $A(u;\varepsilon)$ was introduced in Definition~\ref{def:A}. Moreover, $f\not\equiv 0$ if and only if $V\not\equiv 0$, therefore this property is equivalent to
\[
\det(A(u;1))=0.
\]

\subsection{Stationary points of the phase}

In order to find an asymptotic expansion of the Lax eigenvalues, the general strategy is to now apply the method of steepest descent to every coefficient of $A(u;\varepsilon)$ in the determinant formula $\det(A(u;\varepsilon))=0$. Intuitively, in the limit $\varepsilon\to 0$, the leading asymptotics are driven by the stationary points of the phase $e^{iS_{\lambda}(z)/\varepsilon}$, where
\[
S_{\lambda}(z)=Q(z)-\lambda\log(z).
\]
Since $-zQ'(z)=u(z)$, the stationary points satisfy $S_{\lambda}'(z)=0$ whenever
\[
u(z)+\lambda=0.
\]

In this part, we describe basic properties of the holomorphic extension of $u+\lambda$ in $\C^*$ when $u+\lambda$ has zero or two antecedents on $\T$. 

\begin{lem}[Critical points]\label{lem:def_pk}~
\begin{itemize}
\item (Small eigenvalues) Assume that $p_{\pm}(\lambda)=e^{ix_{\pm}(-\lambda)}$ are the only solutions to the equation $u(p_{\pm}(\lambda))=-\lambda$ on $\partial\D$. Then there exist $p_1(\lambda),\dots, p_{N-1}(\lambda)\in\D$ such that for every $z\in\C^*$,
\begin{equation}\label{eq:u+lambda}
u(z)+\lambda=c_Nz^{-N}(z-p_+)(z-p_-)\prod_{k=1}^{N-1}(z-p_k)\left(z-1/\overline{p_k}\right).
\end{equation}
\item (Large eigenvalues)
Similarly, assume that the equation $u+\lambda=0$ has no solution on $\partial\D$. Then there exist $p_1(\lambda),\dots, p_{N}(\lambda)\in\D$ such that for every $z\in\C^*$,
\[
u(z)+\lambda=c_Nz^{-N}\prod_{k=1}^{N}(z-p_k)\left(z-1/\overline{p_k}\right).
\]
\end{itemize}
\end{lem}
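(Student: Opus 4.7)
The plan is to reduce the statement to a polynomial factorization by clearing the pole at $0$, then use a reality symmetry of $u$ to pair up the zeros.

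First, I would multiply through by $z^N$ to obtain
\[
P(z) := z^N(u(z)+\lambda) = c_N z^{2N} + \dots + \lambda z^N + \dots + \overline{c_N}.
\]
Since $c_N > 0$ and $c_{-N} = \overline{c_N} = c_N \neq 0$, $P$ is a polynomial of degree exactly $2N$ with nonvanishing constant term, so it has exactly $2N$ zeros in $\C^*$ counted with multiplicity, and these are exactly the zeros of $u+\lambda$ in $\C^*$. The factorization claimed in the lemma is just the monic factorization of $P/c_N$, once we identify the zeros.

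Next, I would establish the pairing symmetry. From $c_{-k}=\overline{c_k}$ a direct computation gives, for every $z\in\C^*$,
\[
\overline{u(1/\overline{z})} = \sum_{k=-N}^N \overline{c_k}\, (1/z)^k = \sum_{k=-N}^N c_{-k}\, z^{-k} = u(z),
\]
and since $\lambda\in\R$ this means $(u+\lambda)(z) = 0$ iff $(u+\lambda)(1/\overline{z}) = 0$, with the same multiplicity. Thus the zeros of $u+\lambda$ come in pairs $\{p,\,1/\overline{p}\}$, and a zero is self-paired exactly when $|p|=1$.

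Now the two cases are pure bookkeeping. In the small-eigenvalue case, the only zeros on $\partial\D$ are $p_\pm = e^{ix_\pm(-\lambda)}$; each is self-paired. For $\lambda$ a regular value of $u|_\T$ (so $\tfrac{d}{dx}u(e^{ix_\pm})\neq 0$, equivalently $u'(p_\pm)\neq 0$), $p_\pm$ are simple zeros, accounting for $2$ of the $2N$ zeros. The remaining $2N-2$ zeros lie off $\partial\D$ and split into $N-1$ genuine pairs; labelling $p_k$ as the element of each pair with $|p_k|<1$, the other element is $1/\overline{p_k}\in\C\setminus\overline{\D}$. Equating $P/c_N$ with the monic product having these roots yields
\[
u(z)+\lambda = c_N z^{-N}(z-p_+)(z-p_-)\prod_{k=1}^{N-1}(z-p_k)(z-1/\overline{p_k}).
\]
In the large-eigenvalue case, no zeros lie on $\partial\D$, so all $2N$ zeros pair up into $N$ pairs, and the analogous factorization follows verbatim.

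There is no serious obstacle here; the only slightly delicate point is the simplicity of $p_\pm$, which I expect to justify either by invoking that $-\lambda$ is a regular value of the bell shaped profile (so $u'(p_\pm)\neq 0$ by the chain rule applied to $x\mapsto u(e^{ix})$), or simply by absorbing it into the hypothesis that the equation on $\partial\D$ has exactly two solutions, both counted once. Everything else is counting zeros of a degree-$2N$ polynomial with a $\C^*$-involution.
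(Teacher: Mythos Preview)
Your argument is correct and is precisely the natural route: clear the pole to get the degree-$2N$ polynomial $P(z)=z^N(u(z)+\lambda)$, invoke the reflection symmetry $\overline{u(1/\overline z)}=u(z)$ (which is just $c_{-k}=\overline{c_k}$), and count. The paper does not actually write out a proof of this lemma---it is stated as a direct consequence of the factorization of $P$ and the reality of $u$ on $\partial\D$---so there is nothing to compare against beyond noting that your reasoning is exactly what is implicitly being used.

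Your caveat about the simplicity of $p_\pm$ is well placed. In the paper's setting the lemma is only ever applied with $\lambda\in\widetilde\Lambda_-(\delta)$, where $-\lambda$ lies strictly between $\min u$ and $\max u$ and away from the critical values, so $u'(p_\pm)\neq 0$ follows from the bell-shaped monotonicity exactly as you say. One could also observe directly that the total multiplicity of zeros on $\partial\D$ must be even (by the pairing), so two solutions on $\partial\D$ forces each to be simple unless both are double, a degeneracy excluded by the standing assumption that the $p_k$ are distinct (Lemma~\ref{lem:spacing}).
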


\begin{lem}[Spacing of roots]\label{lem:spacing}
Let $u$ be a bell-shaped trigonometric polynomial of order $N$. There exists $y_1<\dots<y_M$, $M\leq 2N$, such that  if 
\[\lambda\not\in \bigcup_{k=1}^M(y_k-\delta,y_k+\delta)\cup(K(\delta),+\infty),
\] then all the roots $p_1,\dots,p_{N-1},p_+,p_-$ in the small eigenvalues case (or $p_1,\dots,p_N$ in the large eigenvalues case)  have single multiplicity, moreover they satisfy the following uniform bounds:
\begin{itemize}
\item if $\lambda\in\Lambda_-(\delta)$, then
\[
|p_k(\lambda)-p_{\pm}(\lambda)|\geq \frac{1}{C_u(\delta)};
\]
\item if $k\neq l$, then
\[
|p_k(\lambda)-p_l(\lambda)|\geq \frac{1}{C_u(\delta)},
\quad
|p_k(\lambda)|\geq \frac{1}{C_u(\delta)}.
\]
\end{itemize}
\end{lem}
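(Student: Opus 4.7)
The plan is to reformulate the statement in terms of the polynomial
\[
P_\lambda(z) := z^N\bigl(u(z)+\lambda\bigr),
\]
which has degree $2N$ in $z$, leading coefficient $c_N>0$, and only its middle coefficient (that of $z^N$) depends on $\lambda$, linearly. By Lemma~\ref{lem:def_pk}, the zeros of $P_\lambda$ in $\C^*$ are exactly the $p_j(\lambda)$ together with their reflections $1/\overline{p_j(\lambda)}$, plus $p_\pm(\lambda)\in\partial\D$ in the small eigenvalue case. The key observation is that $\lambda$ produces a multiple root of $P_\lambda$ if and only if $-\lambda$ is a critical value of $u:\C^*\to\C$: a root $z_0$ is multiple precisely when $u(z_0)=-\lambda$ and $u'(z_0)=0$.

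Since $z^{N+1}u'(z)$ is a polynomial in $z$ of degree at most $2N$, $u$ has at most $2N$ critical points in $\C^*$, hence at most $2N$ distinct critical values. Listing the real ones as $-y_1,\dots,-y_M$ with $y_1<\dots<y_M$ gives the bound $M\leq 2N$. For $\lambda\notin\{y_1,\dots,y_M\}$, all zeros of $P_\lambda$ in $\C^*$ are simple; in the small eigenvalue case, the two zeros on $\partial\D$ are already known to be simple because $u'(x_\pm)\neq 0$ on $\Lambda_-(\delta)$ by the bell-shape hypothesis.

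For the uniform spacing estimates, I would apply the implicit function theorem: away from the critical values, the roots $p_j(\lambda)$ depend continuously (analytically) on $\lambda$. Setting
\[
\Omega_\delta := \Bigl(\Lambda_-(\delta)\cup[-\min u+\delta,\,K(\delta)]\Bigr)\setminus\bigcup_{k=1}^M(y_k-\delta,y_k+\delta),
\]
which is compact, the continuous positive functions $\lambda\mapsto|p_k(\lambda)-p_l(\lambda)|$ (for $k\neq l$) and $\lambda\mapsto|p_k(\lambda)-p_\pm(\lambda)|$ (on $\Omega_\delta\cap\Lambda_-(\delta)$) attain a positive minimum on $\Omega_\delta$, yielding the lower bound $1/C_u(\delta)$. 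The bound $|p_k(\lambda)|\geq 1/C_u(\delta)$ uses the pole of $u$ at the origin: from the Laurent expansion $u(z)=\overline{c_N}\,z^{-N}+O(z^{-N+1})$ as $z\to 0$, the identity $u(p_k)=-\lambda$ combined with the uniform bound $|\lambda|\leq K(\delta)$ prevents $p_k\to 0$, and compactness then gives a uniform positive lower bound.

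The main technical point is the critical-value analysis of Step 1, in particular confirming that a double root of $P_\lambda$ forces $u'$ to vanish at that root (which follows from $P_\lambda'(z_0) = N z_0^{N-1}(u(z_0)+\lambda)+z_0^N u'(z_0)$ together with $u(z_0)=-\lambda$), and that enumerating only the \emph{real} critical values of the meromorphic extension of $u$ yields the advertised bound $M\leq 2N$. Once Step 1 is in place, the remaining conclusions follow from a routine continuity/compactness argument on $\Omega_\delta$.
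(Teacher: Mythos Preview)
Your proposal is correct and follows essentially the same route as the paper: both reduce to the observation that a multiple nonzero root of $P_\lambda(z)=z^N(u(z)+\lambda)$ forces $u'(z_0)=0$, bound the number of critical points by the degree of $z^{N+1}u'(z)$, and then invoke smooth dependence of simple roots on $\lambda$ together with compactness of the restricted $\lambda$-range. Your write-up is slightly more explicit than the paper's in two places---you spell out the Laurent argument for $|p_k|\geq 1/C_u(\delta)$ and you flag that only the \emph{real} critical values matter---but these are refinements of presentation, not of method.
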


\begin{proof}

Since $P_{\lambda}(z)=z^N(u(z)+\lambda)$ is a polynomial of order $2N$, one can see that its derivative $P_{\lambda}'(z)=Nz^{N-1}(u(z)+\lambda)+z^N u'(z)$ is a polynomial of order $2N-1$. We note that a root  $z$ in $\C\setminus\{0\}$ of $P_{\lambda}$ is a multiple nonzero root   if and only if $u(z)=-\lambda$ and $u'(z)=0$. This happens at most $2N$ times since $P_{\lambda}$ is a polynomial of order $2N$. Denote $z_1,\dots,z_{N'}$, $N'\leq 2N$, the nonzero roots of $z^{N+1}u'(z)$, where we remove the eventual multiplicity. If $P_{\lambda}$ has a multiple nonzero root, then it must be one of the $z_k$, hence $u(z_k)+\lambda=0$. Let us denote $y_1< \dots< y_M$, $M\leq 2N$, the real numbers $y$ such that there exists a multiple nonzero root $z_k$  of $P_{\lambda}$, satisfying $u(z_k)=-y$.

Without loss of generality, we replace the set of small eigenvalues $\Lambda_-(\delta)=(-\max(u)+\delta,-\min(u)-\delta)$ by the compact set
\[
\widetilde{\Lambda}_-(\delta):=[-\max(u)+\delta,-\min(u)-\delta]\setminus \bigcup_{k=1}^{M}(y_k-\delta,y_k+\delta).
\]

On $\widetilde{\Lambda}_-(\delta)$, we know that $p_+(\lambda),p_-(\lambda), p_k(\lambda)$ are simple roots of the polynomial $z^N(u(z)+\lambda)$ with smooth coefficients with respect to the parameter $\lambda$, therefore, these roots are smooth with respect to the parameter $\lambda$. In particular, they lie in a compact set of $\C\setminus\{0\}$ and have a bounded derivative for $\lambda\in\Lambda_-(\delta)$.

The same applies by removing the problematic bands to the set $\Lambda_+(\delta)=[-\min(u)+\delta,K(\delta)]$ of large eigenvalues:
\[
\widetilde{\Lambda}_+(\delta):=[-\min(u)+\delta, K(\delta)]\setminus \bigcup_{k=1}^{M}(y_k-\delta,y_k+\delta).\qedhere
\]
\end{proof}

We recall that the oscillatory phase satisfies
\[S_{\lambda}(z)=Q(z)-\lambda\log(z),
\quad
S_{\lambda}'(z)=-\frac{u(z)+\lambda}{z}.
\]
As a consequence of Lemma~\ref{lem:spacing}, when $\lambda\in\widetilde{\Lambda_-}(\delta)\cup \widetilde{\Lambda_+}(\delta)$, we get by compactness and continuity the following uniform bounds on the phase on a vicinity of the unit disk and far enough from $z=0$.
\begin{lem}[Bounds for the phase]
Consider the constant $C_u(\delta)>0$ from Lemma~\ref{lem:spacing}. Let $V(\delta):=B_{\C}(0,2)\setminus B_{\C}(0,\frac{1}{2C_u(\delta)})$. There exists $C_u'(\delta)>0$ such that for every  $\lambda\in\widetilde{\Lambda_-}(\delta)\cup \widetilde{\Lambda_+}(\delta)$, for every $z\in V(\delta)$,
\begin{itemize}
\item 
 $\|S_{\lambda}-S_{\lambda}(p_k)\|_{\classeC^4(V(\delta))}\leq C'_u(\delta)$;
\item $\frac{|z-p_k|}{|S'_{\lambda}(z)|}\leq C'_u(\delta)$;
\item $\|z^{-\ell-1}\|_{\classeC^2(V(\delta))}\leq C'_u(\delta)$ for $1\leq \ell\leq N$. 
\end{itemize}
\end{lem}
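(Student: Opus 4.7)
All three estimates reduce to a compactness-and-continuity argument, with the quantitative input supplied by Lemma~\ref{lem:spacing}. The parameter set $\widetilde\Lambda_-(\delta)\cup\widetilde\Lambda_+(\delta)$ is compact; on it, the simple-root assumption from Lemma~\ref{lem:spacing}, together with the implicit function theorem, produces continuous maps $\lambda\mapsto p_k(\lambda)$ and $\lambda\mapsto p_\pm(\lambda)$ taking values in a fixed compact subset of $\C^*$ and satisfying a uniform mutual separation bound of order $1/C_u(\delta)$. By construction, $V(\delta)$ is then compact in $\C^*$ with $|z|\in[1/(2C_u(\delta)),2]$, and contains every root in its interior.

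The third bullet is immediate: on $V(\delta)$, $z^{-\ell-1}$ and its first two derivatives are controlled by polynomials in $2C_u(\delta)$ depending only on $\ell\leq N$. For the first bullet, I would write $S_\lambda(z)=Q(z)-\lambda\log z$. The Laurent polynomial $Q$ is $\lambda$-independent, and on $V(\delta)$ its $C^4$-norm is finite and depends only on $u$ and $\delta$ (through the lower bound on $|z|$, which controls the negative-power terms of $Q$). The derivatives of $\lambda\log z$ are monomials $\pm\lambda/z^m$, uniformly bounded because $\lambda$ varies in a compact interval and $|z|$ is bounded below. To bound the function itself, I would remove a short radial cut from $V(\delta)$ disjoint from the $p_k$'s to obtain a simply connected domain, fix a branch of $\log$ there, and integrate $S_\lambda'$ along a short path from $p_k$ to $z$ to control $S_\lambda(z)-S_\lambda(p_k)$ uniformly. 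Continuity of $p_k(\lambda)$ on the compact parameter set then yields a $\lambda$-uniform constant.

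The main obstacle is the second bullet, because the factorization from Lemma~\ref{lem:def_pk} yields
\[
\frac{z-p_k}{S_\lambda'(z)} \;=\; -\frac{z^{N+1}}{c_N\,(z-p_+)(z-p_-)\prod_{j\neq k}(z-p_j)\,\prod_j (z-1/\overline{p_j})}
\]
in the small-eigenvalue case (and an analogous identity in the large-eigenvalue case), so the quotient is meromorphic with poles at every other zero of $u+\lambda$. The stated estimate is therefore to be read as a local one near $p_k$: I would restrict to the disk $B(p_k,\rho)$ with $\rho:=1/(3C_u(\delta))$, inside which every factor in the denominator other than $(z-p_k)$ is bounded below in modulus by a uniform constant thanks to Lemma~\ref{lem:spacing}. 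Combined with the fixed positivity of $|c_N|$ and the upper bound $|z|\leq 2$ on $V(\delta)$, this produces a pointwise bound depending only on $u,N,\delta$, which continuity of the roots in $\lambda$ promotes to a bound uniform on the compact parameter domain. This localized estimate is precisely the form needed for the subsequent steepest-descent argument in Section~\ref{sec:contours}, where contours are deformed to cross each $p_k$ along its steepest-descent direction.
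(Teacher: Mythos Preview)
Your proposal is correct and matches the paper's approach exactly: the paper offers no detailed argument beyond the single sentence ``we get by compactness and continuity the following uniform bounds'', so your elaboration via compactness of the parameter set $\widetilde\Lambda_-(\delta)\cup\widetilde\Lambda_+(\delta)$, continuity of the simple roots from Lemma~\ref{lem:spacing}, and direct estimation on the annulus $V(\delta)$ is precisely what is intended. Your observation that the second bullet cannot hold globally on $V(\delta)$ for a fixed $k$ (since $S_\lambda'$ vanishes at the other roots $p_j$, $j\neq k$) and must be read as a local bound on a disk $B(p_k,\rho)$ is correct and is indeed the only way the statement makes sense; this localized form is exactly what the steepest-descent estimates in Section~\ref{sec:contours} require.
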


We also note that at the critical points $p_k$ and $p_{\pm}$, the real part $\Re(S_{\lambda})$ has at saddle point, whereas for $z$ outside of these critical points, $\Re(S_{\lambda})$ splits the vicinity of $z$ into two parts $\Re(S_{\lambda})>\Re(S_{\lambda})(z)$ and $\Re(S_{\lambda})<\Re(S_{\lambda})(z)$.
\begin{lem}[Real part of the phase]\label{lem:fusion} Assume that $\lambda\in\widetilde{\Lambda_-}(\delta)\cup\widetilde{\Lambda_+}(\delta)$.
\begin{itemize}
\item At the vicinity of the critical points $z=p_k$ and $z=p_{\pm}$, the level set $\Re(S_{\lambda}(\zeta))=\Re(S_{\lambda}(z))$ is composed of two paths which intersect perpendicularly at $z$. Moreover, $z$ is at saddle point.
\item At the vicinity of $z$ outside of these critical points, the level set $\Re(S_{\lambda}(\zeta))=\Re(S_{\lambda}(z))$ is a path, which splits the vicinity of $z$ into two parts $\Re(S_{\lambda})>\Re(S_{\lambda})(z)$ and $\Re(S_{\lambda})<\Re(S_{\lambda})(z)$.
\end{itemize}
\end{lem}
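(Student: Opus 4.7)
My plan is to treat the statement as a local structural result about level sets of the real part of a holomorphic function. Around every point $z \in V(\delta)$ one can choose a branch of the logarithm on a simply connected neighborhood of $z$ (since $V(\delta) \subset \C^*$), so $S_\lambda = Q - \lambda \log$ is locally holomorphic there; equivalently, $\Re(S_\lambda(\zeta)) = \Re(Q(\zeta)) - \lambda \log|\zeta|$ is a globally well-defined harmonic function on $\C^*$. Within this local holomorphic picture, the classical dichotomy between regular points and nondegenerate critical points gives the two cases of the lemma.

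For the second bullet, suppose $z \in V(\delta)$ satisfies $S_\lambda'(z) \neq 0$. By the holomorphic inverse function theorem, $S_\lambda$ maps a small neighborhood $U$ of $z$ biholomorphically onto a neighborhood $W$ of $S_\lambda(z)$. Under this biholomorphism, the level set $\{\zeta \in U : \Re S_\lambda(\zeta) = \Re S_\lambda(z)\}$ is the preimage of the vertical segment $W \cap \{w : \Re w = \Re S_\lambda(z)\}$, which is a smooth arc separating $W$ cleanly into the two half-planes $\{\Re w > \Re S_\lambda(z)\}$ and $\{\Re w < \Re S_\lambda(z)\}$. Pulling back through the biholomorphism yields the claimed single path and the two-sided partition of the neighborhood of $z$.

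For the first bullet, the crucial input is that $p_k$ and $p_\pm$ are simple roots of $u+\lambda$ by Lemma~\ref{lem:spacing}. A direct computation gives
\[
S_\lambda''(\zeta) = -\frac{u'(\zeta)}{\zeta} + \frac{u(\zeta)+\lambda}{\zeta^2},
\]
so at a critical point $z \in \{p_k, p_\pm\}$ one has $u(z)+\lambda = 0$ together with $u'(z) \neq 0$, hence $S_\lambda''(z) \neq 0$. By the holomorphic Morse lemma, there is a local biholomorphism $h$ sending $z$ to $0$ such that $S_\lambda(\zeta) - S_\lambda(z) = h(\zeta)^2$. The level set becomes the pullback of $\{w \in \C : \Re(w^2) = 0\} = \R e^{i\pi/4} \cup \R e^{-i\pi/4}$, i.e.\ two perpendicular lines through the origin partitioning the $w$-plane into four sectors with alternating signs of $\Re(w^2)$. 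Since $h'(z) \neq 0$, the map $h$ is conformal at $z$, so pulling back yields two smooth curves through $z$ still intersecting at a right angle, and the vicinity of $z$ splits into four regions in which $\Re S_\lambda - \Re S_\lambda(z)$ alternates in sign — the saddle-point picture. The argument is essentially standard holomorphic local theory, and the only substantive verification is the non-vanishing of $S_\lambda''$ at each critical point, which follows immediately from the simplicity provided by Lemma~\ref{lem:spacing}, so I do not anticipate any real obstacle.
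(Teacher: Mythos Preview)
Your proof is correct and is essentially the same approach as the paper's: the paper merely states that ``the proof of this lemma consists in a Taylor expansion of $S_{\lambda}$ around $z$,'' and your use of the holomorphic inverse function theorem at regular points and the holomorphic Morse lemma at nondegenerate critical points (with nondegeneracy coming from Lemma~\ref{lem:spacing}) is precisely the detailed execution of that Taylor expansion.
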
 
 
The proof of this lemma consists in a Taylor expansion of $S_{\lambda}$ around $z$.

We now simplify the expressions of $S_{\lambda}$ and $S_{\lambda}''$ that we will obtain when applying the steepest descent method, as a direct consequence of the factorization of $u+\lambda$.
\begin{lem}[Second derivative of the phase]\label{lem:S''}
Assume that $u+\lambda$ has two zeroes on $\T$ and that $p_1(\lambda),\dots,p_{N-1}(\lambda),p_+(\lambda),p_-(\lambda)$ are distinct. Then at the stationary points $p_{\pm}(\lambda)$, we have
\[
|S_{\lambda}''(p_{\pm}(\lambda))|
	=c_N|p_+(\lambda)-p_-(\lambda)|\prod_{k=1}^{N-1}\frac{|p_k(\lambda)-p_{\pm}(\lambda)|^2}{|p_k(\lambda)|}.
\]
\end{lem}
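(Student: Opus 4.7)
The plan is to differentiate the phase twice and evaluate at a critical point, then invoke the factorization from Lemma~\ref{lem:def_pk} and simplify the "reflected" factors using the fact that $p_{\pm}$ lies on the unit circle.

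First, I would recall that $-zQ'(z)=u(z)$, so $S'_{\lambda}(z)=Q'(z)-\lambda/z=-(u(z)+\lambda)/z$. Differentiating once more yields
\[
S''_{\lambda}(z)=-\frac{u'(z)}{z}+\frac{u(z)+\lambda}{z^2}.
\]
At a stationary point $p_{\pm}$, the relation $u(p_{\pm})+\lambda=0$ kills the second term, so $S''_{\lambda}(p_{\pm})=-u'(p_{\pm})/p_{\pm}$. Since $|p_{\pm}|=1$, taking moduli gives $|S''_{\lambda}(p_{\pm})|=|u'(p_{\pm})|$.

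Next I would use the factorization from Lemma~\ref{lem:def_pk},
\[
u(z)+\lambda=c_N z^{-N}(z-p_+)(z-p_-)\prod_{k=1}^{N-1}(z-p_k)(z-1/\overline{p_k}).
\]
Because $p_{\pm}$ is a simple root of $u+\lambda$ under the distinctness hypothesis, differentiating this product and evaluating at $p_{\pm}$ kills all contributions except the one coming from the derivative of the factor $(z-p_{\pm})$, giving
\[
u'(p_{\pm})=c_N\,p_{\pm}^{-N}(p_{\pm}-p_{\mp})\prod_{k=1}^{N-1}(p_{\pm}-p_k)(p_{\pm}-1/\overline{p_k}).
\]
Taking absolute values and using $|p_{\pm}|=1$ removes the prefactor $|p_{\pm}^{-N}|$.

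The last step, which is the only genuine computation, is to simplify $|p_{\pm}-1/\overline{p_k}|$. Using $|p_{\pm}|=1$, I can write
\[
p_{\pm}-\frac{1}{\overline{p_k}}=\frac{p_{\pm}\overline{p_k}-1}{\overline{p_k}}=\frac{p_{\pm}(\overline{p_k}-\overline{p_{\pm}})}{\overline{p_k}},
\]
so that $|p_{\pm}-1/\overline{p_k}|=|p_k-p_{\pm}|/|p_k|$. Plugging this into the product and combining with the factor $|p_{\pm}-p_k|$ already present yields exactly
\[
|S''_{\lambda}(p_{\pm})|=c_N\,|p_+-p_-|\prod_{k=1}^{N-1}\frac{|p_k-p_{\pm}|^2}{|p_k|},
\]
as claimed. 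There is no serious obstacle here: the only slightly delicate point is the symmetry identity $|p_{\pm}-1/\overline{p_k}|=|p_k-p_{\pm}|/|p_k|$, which is where the hypothesis $p_{\pm}\in\partial\D$ (i.e. that $u+\lambda$ has its two real roots on the torus) enters essentially.
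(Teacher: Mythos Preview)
Your proof is correct and follows essentially the same approach as the paper: compute $|S_\lambda''(p_\pm)|=|u'(p_\pm)|$ via $S_\lambda'(z)=-(u(z)+\lambda)/z$ and $|p_\pm|=1$, invoke the factorization of Lemma~\ref{lem:def_pk}, and simplify $|p_\pm-1/\overline{p_k}|=|p_k-p_\pm|/|p_k|$ using $p_\pm\overline{p_\pm}=1$. The paper's version is just terser, stating $|S_\lambda''(p_+)|=|u'(p_+)|$ directly and writing the reflection identity as $|p_+-1/\overline{p_k}|=|p_+||p_k|^{-1}|\overline{p_k}-1/p_+|=|p_k|^{-1}|\overline{p_k}-\overline{p_+}|$.
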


\begin{proof}
We treat the case $p_{\pm}=p_+$ to simplify the notation. Since $u(p_{+})+\lambda=0$ and $|p_{+}|=1$, we compute
\begin{align*}
|S_{\lambda}''(p_{+})|
	=|u'(p_{+})|
	=c_N|p_+-p_-|\prod_{k=1}^{N-1}|p_+-p_k||p_+-1/\overline{p_k}|.
\end{align*}
Finally, we note that
\[
|p_+-1/\overline{p_k}|=|p_+||p_k|^{-1}|\overline{p_k}-1/p_+|=|p_k|^{-1}|\overline{p_k}-\overline{p_+}|.\qedhere
\]
\end{proof}

Finally, we make a link between the integral in the Bohr-Sommerfeld quantization formulation for the Lax eigenvalues in Theorem~\ref{thm:lax_u}, and the value of the phase  at the critical points $p_{\pm}(\lambda)=e^{ix_{\pm}(-\lambda)}$.

\begin{lem}[Phase and distribution function]\label{lem:aire}
Assume that $-\lambda\in(\min(u),\max(u))$. Then there holds
\[
S_{\lambda}(p_+(\lambda))-S_{\lambda}(p_-(\lambda))
	=2i\pi \int_{-\lambda}^{\max(u)}F(\nu)\dd\nu.
\]
\end{lem}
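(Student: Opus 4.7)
The strategy is to compute $S_\lambda(p_+)-S_\lambda(p_-)$ by integrating the (single-valued) derivative $S_\lambda'$ along an arc of the unit circle between $p_-$ and $p_+$, then to rewrite the resulting one-dimensional integral in terms of the level sets of $u_0$ via a layer-cake (Fubini) argument.

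Concretely, recall that on the slit domain $\Omega=\mathbb{C}\setminus[0,+\infty)$ the function $S_\lambda$ is a well-defined holomorphic primitive of
\[
S_\lambda'(z)=-\frac{u(z)+\lambda}{z}.
\]
Write $p_\pm(\lambda)=e^{ix_\pm(-\lambda)}$ with $x_-(-\lambda)\in(0,x_{\max})$ and $x_+(-\lambda)\in(x_{\max},2\pi)$, so that the arc $\gamma\subset \Omega$ defined by $z=e^{ix}$, $x\in[x_-(-\lambda),x_+(-\lambda)]$ joins $p_-$ to $p_+$ without crossing the branch cut. Parametrising this arc, $dz/z=i\,dx$, so
\[
S_\lambda(p_+)-S_\lambda(p_-)=\int_\gamma S_\lambda'(z)\,dz
=-i\int_{x_-(-\lambda)}^{x_+(-\lambda)}\bigl(u_0(x)+\lambda\bigr)\,dx.
\]

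The last step is to identify the real integral with $\int F(\nu)\,d\nu$. On $(x_-(-\lambda),x_+(-\lambda))$, by the bell-shape assumption, $u_0(x)\geq -\lambda$, so by Fubini
\[
\int_{x_-(-\lambda)}^{x_+(-\lambda)}\bigl(u_0(x)+\lambda\bigr)\,dx
=\int_{\{u_0\geq -\lambda\}}\int_{-\lambda}^{u_0(x)}d\nu\,dx
=\int_{-\lambda}^{\max u_0}\operatorname{Leb}\bigl(\{u_0\geq\nu\}\bigr)\,d\nu
=2\pi\int_{-\lambda}^{\max u_0}F(\nu)\,d\nu,
\]
using the definition~\eqref{def:F} of $F$. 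Combining the two displays yields the claim (up to the $\pm$ sign, which is fixed by the convention on the labelling of $p_\pm$ and on the direction of traversal of the arc; equivalently, the alternative arc around the origin differs by a residue contribution $-2i\pi\lambda$ coming from $\operatorname{Res}_0 S_\lambda'=-\lambda$).

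\textbf{Main difficulty.} There is essentially no hard step: everything reduces to the antiderivative structure $S_\lambda'(z)=-(u(z)+\lambda)/z$ combined with a parametrisation of the unit circle and a one-line layer-cake computation. The only point demanding a bit of care is the choice of the arc joining $p_-$ to $p_+$ inside $\Omega$ (to keep $S_\lambda$ single-valued) together with the orientation convention distinguishing the short arc through $x_{\max}$ from the long arc around the branch point at $0$; once that convention is fixed consistently with the definition of $p_\pm$, the formula follows.
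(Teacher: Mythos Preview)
Your proof is correct and follows essentially the same route as the paper's: both compute $S_\lambda(p_+)-S_\lambda(p_-)$ by integrating $S_\lambda'(z)=-(u(z)+\lambda)/z$ along the arc of $\partial\D$ from $p_-$ to $p_+$, reducing to $-i\int_{x_-}^{x_+}(u_0(x)+\lambda)\,dx$. The only difference is cosmetic: for the final identification the paper uses integration by parts followed by the change of variable $\nu=u_0(x)$ on each monotone branch, whereas you invoke the layer-cake formula directly; these are two packagings of the same computation, and your version is arguably cleaner. Your remark on the sign/orientation convention is also apt---the paper's own proof lands on $-i\int_{x_-}^{x_+}(u_0+\lambda)\,dx=-2i\pi\int F\,d\nu$ with the conventions stated there, so the overall sign is indeed fixed by the labelling of $p_\pm$.
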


\begin{proof}
We write the difference of phase values as an integral from $x_-=x_-(-\lambda)$ to $x_+=x_+(-\lambda)$. We get by using the original notation for the function $u$ defined on $\T$ that
\begin{align*}
S_{\lambda}(p_+(\lambda))-S_{\lambda}(p_-(\lambda))
	&=\int_{x_-(\lambda)}^{x_+(\lambda)}ie^{ix}S_{\lambda}'(e^{ix})\dd x\\
	&=-i\int_{x_-(\lambda)}^{x_+(\lambda)}(u(x)+\lambda)\dd x.
\end{align*}
Let $x_{\max}\in(x_-(\lambda),x_+(\lambda))$ be the point on $\T$ such that $u(x_{\max})=\max(u)$.
Using an integration by parts then a change of variables $u(x)=\nu$, leading to $u'(x)\dd x=\dd \nu$ and $x_{\pm}(\nu)=x$, we get
\begin{align*}
\int_{x_-(\lambda)}^{x_+(\lambda)}u(x)\dd x
	&=\Big[u(x)\Big]_{x_-(\lambda)}^{x_+(\lambda)}-\int_{x_-(\lambda)}^{x_{\max}}xu'(x)\dd x-\int_{x_{\max}}^{x_+(\lambda)}xu'(x)\dd x\\
	&=-\lambda(x_+(\lambda)-x_-(\lambda))-\int_{-\lambda}^{\max(u)}(x_-(\nu)-x_+(\nu))\dd\nu.
\end{align*}
This leads to the formula.
\end{proof}

\section{Deformation of contours}\label{sec:contours}

 In order to apply the steepest descent method, we will choose contours such that the stationary points maximize the real part of the phase $\Re(S_{\lambda}(z))=\Re(Q(z))+\lambda\log(|z|)$ on the  contour instead of $\Gamma_k$ or $\Gamma_N^{\pm}$. 
We therefore study the level sets of $\Re(S_{\lambda})$ in order to deform the contours $\Gamma_k$ and $\Gamma_N^{\pm}$ into more suitable ones in parts~\ref{part:contours_inside} and~\ref{part:contours_outside}. Then we apply the method of steepest descent in parts~\ref{part:steepest_descent} and~\ref{part:steepest_descent_outside}.

\subsection{Eigenvalues inside the bulk}\label{part:contours_inside}

In this part, we choose a small eigenvalue $\lambda\in\Lambda_-(\delta)$. We justify that we can deform the contours so that every coefficient in the matrix $A$ is suitable for a steepest descent expansion.
For the same equation on the line, this work was the tackled in Proposition 5.1 in~\cite{MillerWetzel2016}.

\begin{mydef}[Suitable contours]\label{def:suitable_W}
The sequence of loops $W_1,\dots,W_N$ is suitable if:
\begin{enumerate}
\item for every $1\leq k\leq N$, the loop $W_k$ starts along the segment $[0,r_ke^{i\theta_k}]$ and finishes along the segment $[r_k e^{i\theta_{k+1}},0]$ for small enough $r_k>0$;
\item when $k\leq N-1$, each contour $W_k$ passes through exactly one critical point $p_k(\lambda)$ of $S_{\lambda}$, and $\Re(S_{\lambda})$ is maximal along $W_k$ exactly at $p_k(\lambda)$;
\item when $k=N$, the contour passes only through the two critical points $p_+(\lambda)$ and $p_-(\lambda)$, and $\Re(S_{\lambda})$ is maximal along $W_N$ exactly at $p_+(\lambda)$ and $p_-(\lambda)$.
\end{enumerate}
\end{mydef}
By construction of the suitable contours, we will get that for every critical point of $S_{\lambda}$ in $\overline{\D}$, there is exactly one contour which passes through this point.

\begin{prop}[Existence of suitable contours]\label{prop:contours}
There exists a family of suitable contours $(W_k)_{1\leq k\leq N}$ according to Definition~\ref{def:suitable_W}, with $W_N=W_N^+\cup W_N^-$, and such that the following holds. Let $B(\lambda;\varepsilon)$ be the matrix with coefficients  given for $1\leq k\leq N-1$ and $1\leq \ell\leq N$ by
\[
B_{k,\ell}=\int_{W_k}e^{Q(\zeta)/\varepsilon}\zeta^{-\ell-1-\lambda/\varepsilon}\dd \zeta,
\]
\[
B_{k,N}^{\pm}=\int_{W_N^{\pm}}e^{Q(\zeta)/\varepsilon}\zeta^{-\ell-1-\lambda/\varepsilon}\dd \zeta,\]
\[
B_{k,N}=B_{k,N}^++e^{-2i\pi\lambda/\varepsilon}B_{k,N}^-.
\]
Then $\det(B(\lambda;\varepsilon))=0$ if and only if $\det(A(\lambda;\varepsilon))=0$.
\end{prop}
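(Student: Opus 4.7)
The plan is to construct, for each $k$, a contour $W_k$ (and $W_N^\pm$ for $k = N$) that is homotopic to $\Gamma_k$ (respectively $\Gamma_N^\pm$) within the holomorphy domain $\Omega\setminus\{0\}$ of the integrand $e^{Q(\zeta)/\varepsilon}\zeta^{-\ell-1-\lambda/\varepsilon}$, with matching terminal rays at the origin. Since such a homotopy preserves each oscillatory integral, this yields $B_{k,\ell} = A_{k,\ell}$ entry by entry, and in particular $\det(B(\lambda;\varepsilon)) = \det(A(\lambda;\varepsilon))$, which gives the claimed equivalence.

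First, I would use the asymptotic~\eqref{eq:ReQ_theta} to identify the ``valleys'' of $\Re(Q)$ at the origin: since $\Re(Q(re^{i\theta}))$ has leading order proportional to $(c_N/N)r^{-N}\cos(N\theta)$, it tends to $-\infty$ along each ray $\theta_k = (2k-1)\pi/N$ and to $+\infty$ along the intermediate rays. Only these valleys are admissible approach directions for a contour at $0$, which prescribes the endpoint behaviour of $W_k$. Next, I would assign the critical points of $S_\lambda$ in $\overline{\D}$ to the angular sectors $S_k = \{\theta_k < \arg\zeta < \theta_{k+1}\}$: by an argument-principle count applied to $u + \lambda$ inside each sector, or by continuity in $\lambda$ from a limit where the assignment is transparent, each $S_k$ with $k \leq N-1$ should contain exactly one interior critical point $p_k \in \D$, while the distinguished sector $S_N$ straddling the positive real axis contains both boundary critical points $p_\pm \in \partial\D$. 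On $\widetilde{\Lambda}_-(\delta)$, Lemma~\ref{lem:spacing} keeps the critical points simple and smooth in $\lambda$, so the assignment is stable.

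Then, for each $k \leq N-1$, I would start at the saddle $p_k$ and follow the two descending branches of the level set $\{\Re(S_\lambda) = \Re(S_\lambda(p_k))\}$ provided by Lemma~\ref{lem:fusion}. The goal is to establish that these branches terminate at the origin exactly along the two valleys $\theta_k$ and $\theta_{k+1}$ bounding $S_k$, so that the resulting loop $W_k$ satisfies the condition that $\Re(S_\lambda)$ is maximised precisely at $p_k$. For $k = N$, I would split $W_N$ into $W_N^+$ through $p_+$ (joining valleys $0^+$ and $\theta_1$) and $W_N^-$ through $p_-$ (joining $\theta_N$ and $0^-$); the factor $e^{-2i\pi\lambda/\varepsilon}$ in the definitions of $A_{N,\ell}$ and $B_{N,\ell}$ accounts for the jump of $\zeta^{-\lambda/\varepsilon}$ across the branch cut when the two halves are glued. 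Once built, each $W_k$ lies in $\Omega\setminus\{0\}$ and is homotopic to $\Gamma_k$ with matching terminal rays, so Cauchy's theorem closes the argument.

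The main obstacle is this last construction: I must show that a descending trajectory leaving $p_k$ actually reaches the origin along one of the two bounding valleys of $S_k$, rather than accumulating at another critical point, spiralling to infinity, or escaping the sector in the wrong direction. This requires a global topological analysis of the level sets of the harmonic function $\Re(S_\lambda)$ on $\overline{\D}\setminus\{0\}$, combining the local description of Lemma~\ref{lem:fusion} with the valley/ridge behaviour at $0$ and the behaviour on $\partial\D$ near $p_\pm$, together with the critical-point count of the previous step. A similar analysis on the real line appears in Proposition~5.1 of~\cite{MillerWetzel2016}; the extra subtlety on the torus is to handle the distinguished sector $S_N$ containing the branch cut so that the two pieces $W_N^\pm$ fit together consistently.
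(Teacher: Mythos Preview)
Your proposal has a genuine gap at the step where you assign critical points to angular sectors. You assert that each sector $S_k=\{\theta_k<\arg\zeta<\theta_{k+1}\}$ for $k\leq N-1$ contains exactly one interior critical point $p_k$, and that the two descending branches from $p_k$ terminate precisely at the valleys $\theta_k$ and $\theta_{k+1}$. Neither claim is justified, and in fact neither is true in general: the interior roots $p_1,\dots,p_{N-1}$ of $u+\lambda$ have no reason to be distributed one per sector (the figures in the paper already illustrate this), and a steepest descent path leaving a saddle can end in any valley, not necessarily an adjacent one. The argument-principle sketch you offer does not control the angular location of the roots, and continuity in $\lambda$ does not help without first exhibiting a value of $\lambda$ for which the one-per-sector property holds and then ruling out roots crossing sector boundaries.

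Because of this, the entry-by-entry equality $B_{k,\ell}=A_{k,\ell}$ you aim for is simply false in general, and the paper does \emph{not} prove it. What the paper actually shows is weaker but sufficient: each steepest descent loop $W'_k$ encloses a prescribed subset of the ``mountains'' (connected components of $\{\Re S_\lambda>0\}$ near the origin), so that $W'_k$ is homologous to an integer combination of several $\Gamma_{\sigma(l)}$, not to a single one. The transition matrix $P$ from the system $\{A_{k,\ell}\}$ to $\{B_{k,\ell}\}$ is then block lower-triangular with ones on the diagonal, hence invertible, which yields only $\det(B)=0\iff\det(A)=0$. Producing this triangular structure is the real work: the paper does it via a global analysis of how the components of $\{\Re S_\lambda>L\}$ merge as $L$ decreases, encoded in a binary tree, together with a ``tree pruning'' algorithm that pairs each critical point with a leaf in a way compatible with the merging order. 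Your proposal bypasses exactly this combinatorial step, and that is where the argument breaks.
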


The rest of this part is devoted to the proof of Proposition~\ref{prop:contours}.

\paragraph{Description of the level sets}
We first describe the level sets of $\Re(S_{\lambda})$. This description is inspired from Appendix B of~\cite{MillerWetzel2016} for the same equation on the real line. One example of illustration of the level sets of $\Re(S_{\lambda})$ on the complex plane is given in Figure~\ref{fig:level0}.

 We consider the level curve $\Re(S_{\lambda})=L$ for $L$ decreasing from $+\infty$ to $-\infty$. For fixed $L$, we consider that $L$ is the sea level, therefore $\{z\in\C^*\mid\Re(S_{\lambda})<L\}$ will be the part of the landscape under the water and $\{z\in\C^*\mid\Re(S_{\lambda})>L\}$ will be the part above water.

We first note that for $z=e^{ix}\in\partial \D$, there holds $Q(z)\in i\R$, therefore $\Re(S_{\lambda})=0$ on $\partial\D$.
Moreover, in a neighborhood $B(0,r_0)$ of $0$, write $z=re^{ix}$, $0<r<r_0$. Then the main contribution to  $\Re(S_{\lambda})$ is the term 
\[
\Re\left(c_N\frac{z^{-N}}{N}\right)=c_N\frac{r^N}{N}\cos(Nx).
\]
Choosing $r_0$ small enough, one has that $\Re(S_{\lambda})\gg 0$ on the branches $(0,r_0 e^{2ik\pi/N})$ whereas $\Re(S_{\lambda})\ll 0
$ on the branches $(0,r_0 e^{(2k+1)i\pi/N})$.
 As a consequence, inside of $B(0,r_0)$, there is a alternation of lakes and islands when turning around $0$. Let $L\gg 0$  be large enough so that the compact set~$\overline{\D}\setminus B(0,r_0)$ is covered by water. There are isolated infinite mountains which are localized around the directions $(0,r_0 e^{i\widetilde{\theta_k}})$, $\widetilde{\theta_k}=\frac{2k\pi}{N}$, see Figure~\ref{fig:sfig1}. The same happens when $L\ll 0$ is small enough, there are isolated infinite lakes localized around the directions $(0,r_0 e^{i\theta_k})$, $\theta_k=\frac{(2k+1)\pi}{N}$.


\begin{figure}
\begin{center}
\includegraphics[scale=0.4]{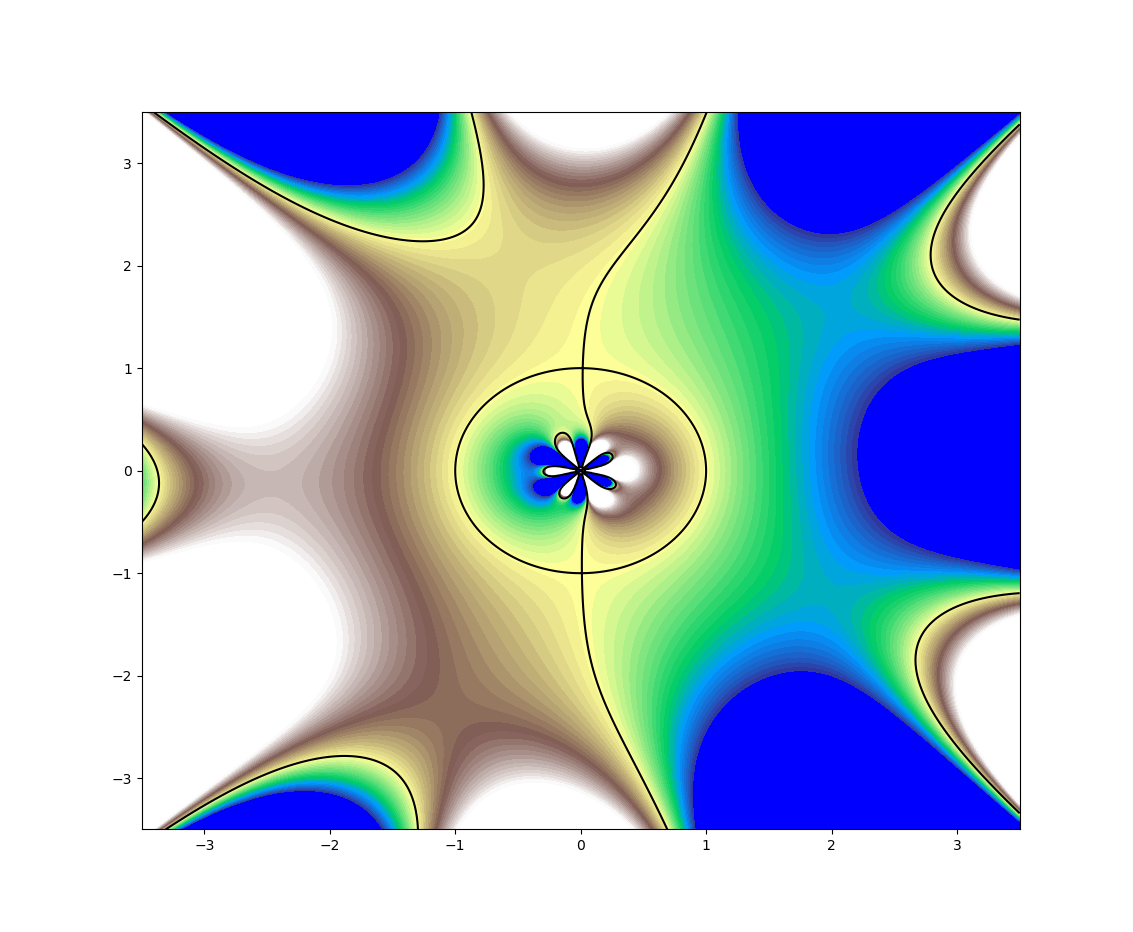}
\caption{Map of the landscape when $u(x)=8\cos(x)+\sin(2x)+\cos(6x)/5$ and $\lambda=0$. The green to dark blue color represents very low values of $\Re(S)$ (lakes) whereas the brown to white color represents very high values of $\Re(S)$ (mountains with snow). The black curve is the zero level set.}\label{fig:level0}
\end{center}
\end{figure}

\begin{figure}
\begin{subfigure}{.5\textwidth}
  \centering
  \begin{tikzpicture}
  \node[anchor=south west,inner sep=0] at (0,0) 
  {\includegraphics[width=.8\linewidth]{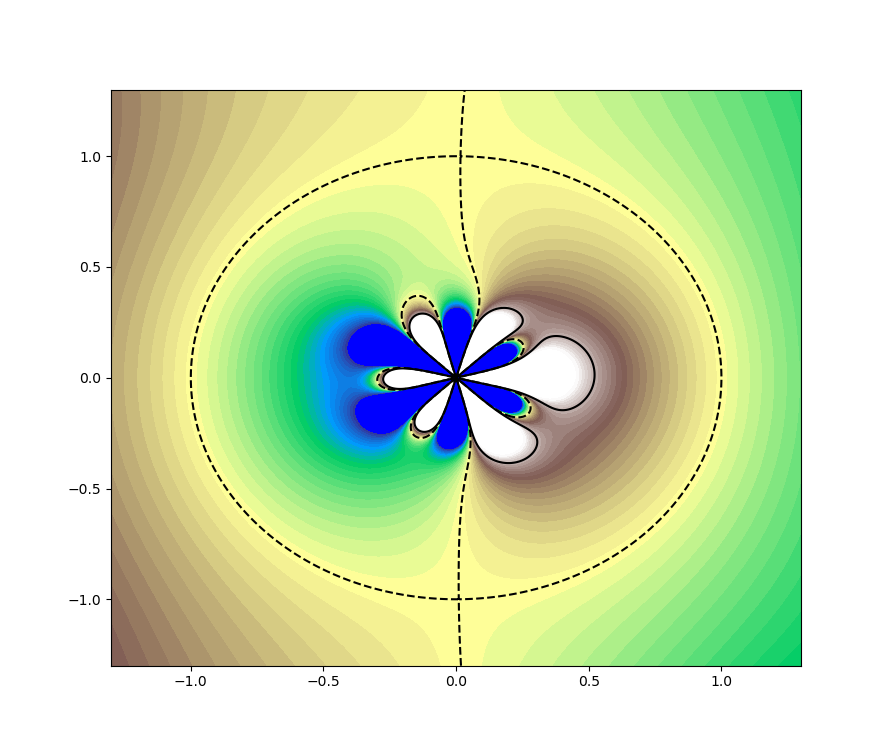}};
  \draw   (5,3.1) node{{ $\ell_6$}};
  \draw   (4.3,3.9) node{$\ell_1$};
  \draw   (3.2,3.9) node{$\ell_2$};
  \draw   (2.6,3) node{$\ell_3$};
  \draw   (3.2,2.2) node{$\ell_4$};
  \draw   (4.3,2.1) node{$\ell_5$};
  \end{tikzpicture}
  \caption{$L>L_1>0$\\ Isolated mountains}
  \label{fig:sfig1}
\end{subfigure}%
\begin{subfigure}{.5\textwidth}
  \centering
  \includegraphics[width=.8\linewidth]{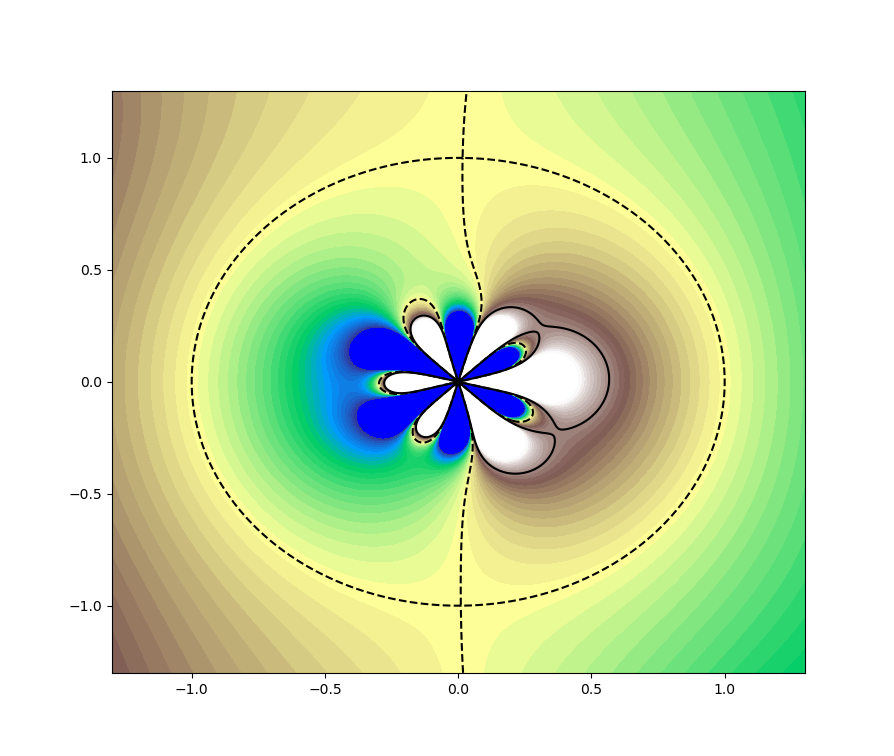}
  \caption{$L_1>L>L_2$\\ Two mountains merge}
\end{subfigure}%

\begin{subfigure}{.5\textwidth}
  \centering
  \includegraphics[width=.8\linewidth]{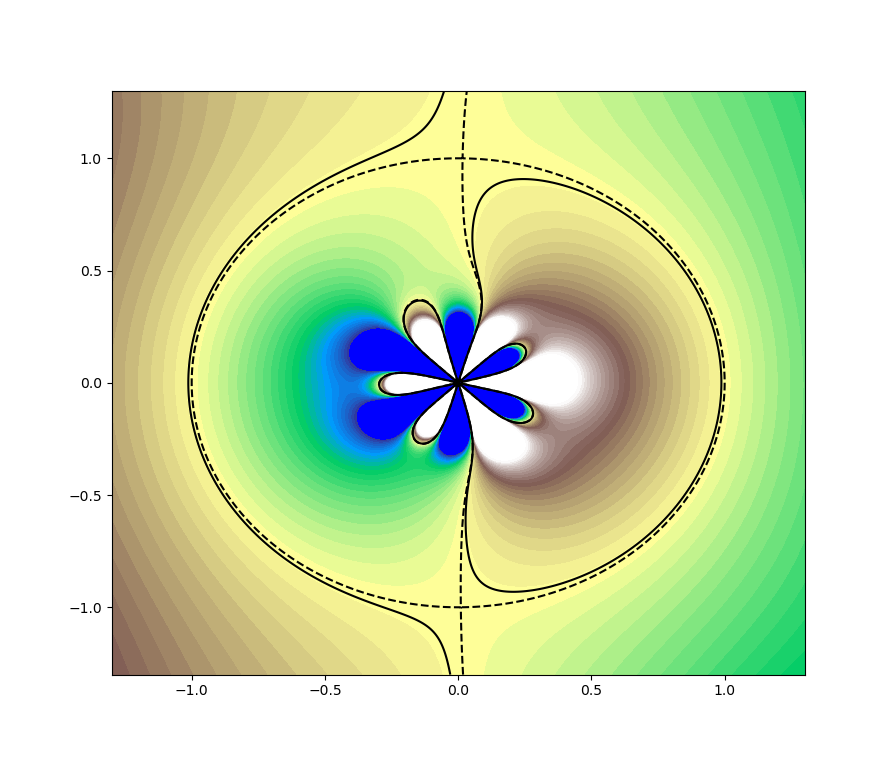}
  \caption{$L_2>L>0$\\ Three mountains have merged}
\end{subfigure}
\begin{subfigure}{.5\textwidth}
  \centering
  \includegraphics[width=.8\linewidth]{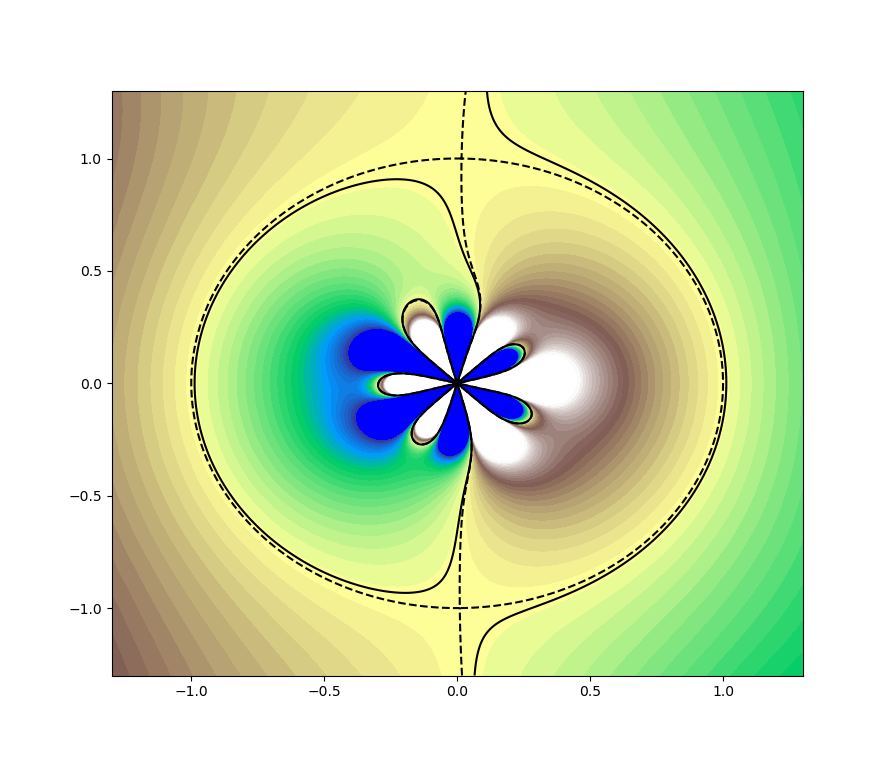}
  \caption{$L_3<L<0$\\ A continent joins $\partial\D$}
\end{subfigure}%

\begin{subfigure}{.5\textwidth}
  \centering
  \includegraphics[width=.8\linewidth]{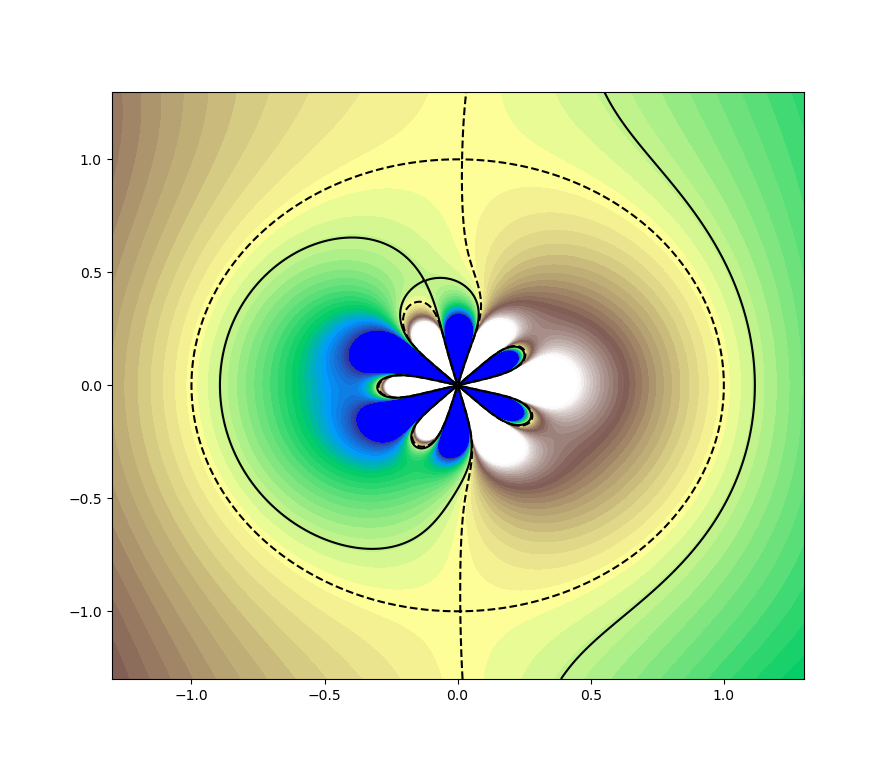}
\caption{$L=L_3<0$\\ Two groups of lakes separate}
\end{subfigure}
\begin{subfigure}{.5\textwidth}
  \centering
  \includegraphics[width=.8\linewidth]{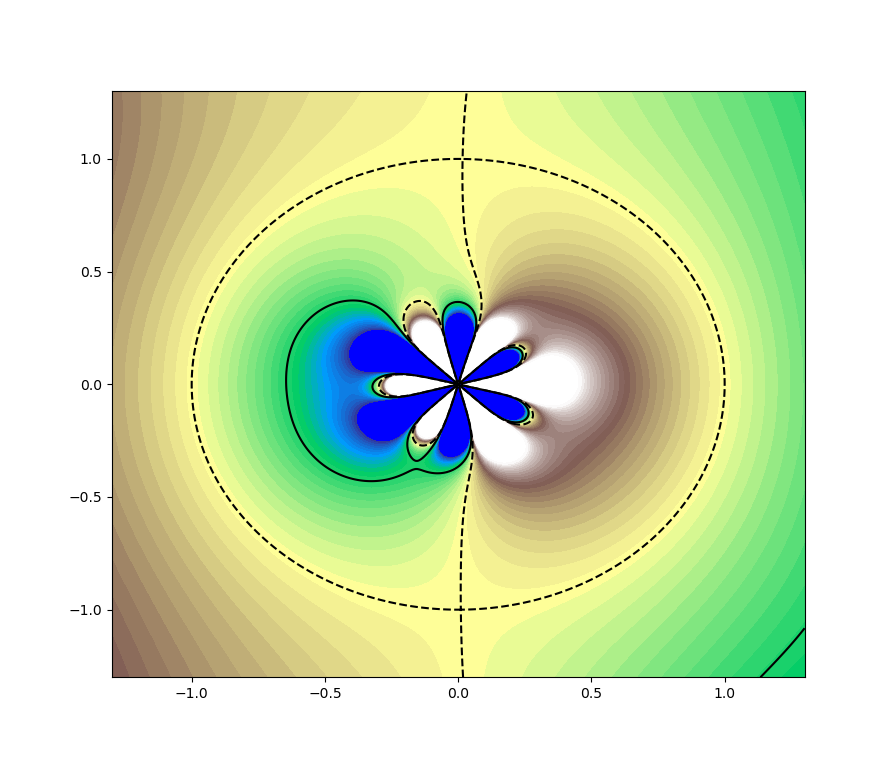}
\caption{$L_4<L<0$\\ Low sea level}
\end{subfigure}
\caption{Plots of several level sets of $\Re(S_{\lambda})$. The solid line is the level set $L$, the dotted line is the level set $L_4=0$.}
\label{fig:level_sets}
\end{figure}

We start from a very high sea level $L\gg 0$, where there are isolated infinite mountains localized around $(0,r_0e^{i\widetilde{\theta_k}})$. Under a long drought the sea level will decrease, see Figure~\ref{fig:level_sets}. Therefore at the critical values $L=L_k>0$, some of the infinite mountains will become connected. According to Lemma~\ref{lem:fusion}, the fusion takes place at one of the points $p_k$, $2\leq k\leq N$, and at most two connected groups of mountains fuse at this point. At the critical value $L=0$, one of the groups of mountains becomes connected to a vicinity of the disk in the complex plane, 
 totally encircling the remaining lakes in $\D$.  Then, at critical values $L=L_k<0$, groups of mountains fuse together again, and similarly, at every point $p_k$ where this happens, at most two groups mountains become connected. Given that initially there are $N$ infinite pits and $N$ infinite peaks, a counting argument implies that a connected component of mountains does not fuse with itself again, and every infinite mountain still isolated fuses exactly once with a group of mountains in the process. Up to renumbering the critical points, the critical values $L_k$ ordered such that $L_1>\dots>L_{K-1}> L_K=0>L_{K+1}>\dots> L_N$ each correspond to the critical point $p_k$ if $k\neq K$, or to the critical points $p_{\pm}$ if $k=K$.

\paragraph{Tree pruning algorithm}
We use the ``tree pruning algorithm'' construction from Appendix~B of~\cite{MillerWetzel2016} in order to properly describe how the mountains merge. The goal is to show that the steepest descent contours $W_k$ will form a bijection with the original contours $\Gamma_k$.

%
%
%
%

We first construct a tree as follows. We label by $\ell_k$, $1\leq k\leq N$, the infinite mountains localized around the lines $(0,r_0 e^{2ik\pi/N})$ for small $r_0$. The mountains $\ell_k$ will be leaves of the tree, and the critical points $p_k$ will be internal modes.
At the end of the construction, the descendants of each internal mode $p_k$ are the mountains that get connected when the sea level goes from $L_k^+$ to $L_k^-$.

We proceed in order of decreasing sea level $L_k$. At the level set $L_k$ for $k\neq K$, because the sea level decreases, two distinct connected groups of mountains fuse together through the critical point~$p_k$. If the connected group is composed of several mountains, we draw an edge from $p_k$ to the internal mode $p_l$ at which this group became fully connected. If the connected group is a single leaf $\ell_m$, we draw an edge from $p_k$ to this leaf. As a consequence $p_k$ has two direct children.

We note that when $L_k>0$, then the mountains are subsets of $\{z\in\D\mid \Re(S_{\lambda}(z))\geq L_k\}$, therefore a connected group of mountains does not cross the zero level set.
At the level set $L_K=0$, no groups of mountains inside of $\D$ fuse with each other, but one group of mountains becomes connected with a vicinity of $\partial\D$. Before this, this group of mountains either became fully connected at some node $p_l$, or is an isolated mountain $\ell_l$.  
We decide that the  node labeled $p_{\pm}$ has exactly one child which is either the internal mode $p_l$ or to the leaf $\ell_l$. In particular, the leaves that descend from $p_{\pm}$ form a connected component of $\{z\in\D\mid \Re(S_{\lambda}(z))>0\}$ called {\it continent}. Its characteristic is that it becomes connected as $L\to 0^-$ to a vicinity of the level set $\{z\in\C^*\mid \Re(S_{\lambda}(z))>0^-\}$, hence to a vicinity of $\partial\D$ .
 
We get a tree with $N$ leaves $\ell_1,\dots,\ell_N$ and $(N-1)$ internal nodes $p_k$ for $1\leq k\neq N$ and $k\neq K$ with two children each, plus one extra node $p_{\pm}$ with only one child.
   
Now we chop the tree at the node $p_{\pm}$, which becomes both a leaf for one sub-tree and a root for the other. Therefore we have one sub-tree of $M\geq 1$ leaves and $(M-1)$ internal modes $p_k$, rooted at  $p_{\pm}$, and a sub-tree with $(N-M+1)$ leaves including $p_{\pm}$ and $(N-M)$ internal modes.

For the sub-tree of $M$ leaves and then the sub-tree of $(N-M+1)$ leaves, we apply the tree-pruning algorithm. At one step, we only keep a sub-tree of the tree such that each child of a node $p_k$ is represents one of the two groups of mountains that become connected at $p_k$. We obtain this sub-tree by repeating the following operation:
\begin{enumerate}
\item\label{step1} We consider an internal node $p_k$ connected to two leaves. By construction, the two  leaves that come from the two edges in the original sub-tree are the two connected groups of mountains that fuse together when $L=L_k^-$.
\item We remove one of the two leaves $\ell_m$, the internal node $p_k$ and the edge between them. We can always choose this leaf to be different from $p_{\pm}$. Note that at the critical point $p_k$, one could construct a contour staying in the level set $\{z\in\C\mid \Re(S_{\lambda}(z))<L_k\}$ in two ways by encircling either one of the two groups of mountains that are merging. The choice of a leaf $\ell_m$ represents the choice of the group of mountains that we decide to encircle.

\item  We define the set $D_k$ as the set of leaves that have been pruned from the sub-tree up to now, and that are descendant of $p_k$ in the original sub-tree. This represents the set of mountains that are allowed to be encircled in our choice of contour.

\item
Let $p_l$ be the parent of $p_k$, the other leaf which was connected to $p_k$ becomes a leaf connected $p_l$. If $p_l$ does not exist, then the process is over. Otherwise, we search for an internal mode which has $p_l$ as an ancestor and which is connected to two leaves. We apply step~\ref{step1} to this internal mode.
\end{enumerate}

We refer to Figure~\ref{fig:tree} for an example of tree and tree pruning in the context of Figure~\ref{fig:level_sets}.

\begin{figure}
\begin{center}
\begin{tikzpicture}[scale=0.8]
\node {$p_6$}
    child {node {$p_5$} 
		child {node {$p_4$}
			child {node{$p_{\pm}$} 
				child {node{$p_2$}
					child {node{$p_1$}
						child[red] {node[black]{$\ell_1$}}
						child {node {$\ell_6$}}} 
					child[red] {node[black]{$\ell_5$}}}}
			child[red] {node[black]{$\ell_2$}} }
		child[red] {node[black] {$\ell_4$}}}
    child[red] {node[black] {$\ell_3$}};
\end{tikzpicture}
\end{center}
\caption{Construction of a tree from Figure~\ref{fig:level_sets}. The colored edges represent the pairings (leaves and internal modes removed together) obtained during the tree pruning algorithm.}
\label{fig:tree}
\end{figure}
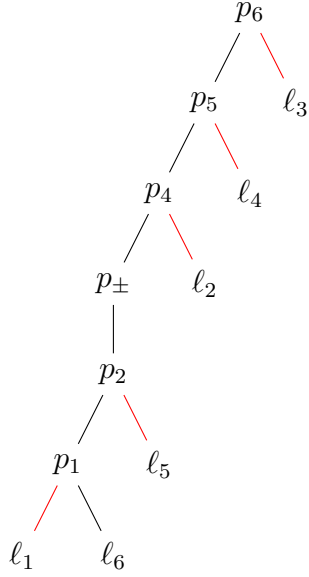

At the end of the process, all the leaves have been pruned, except for one leaf $\ell'_N=\ell_{\sigma(N)}$ for the first sub-tree and the leaf $p_{\pm}$ for the second sub-tree.  Since $\ell'_N$ is a descendant of $p_{\pm}$ in the big tree, it belongs to the ``continent'', in particular, the complex points $p_+
$,  $e^{i\theta_{\sigma(N)}}$ and $p_-$ are placed in this order in the counterclockwise orientation. We choose the determination of the logarithm to have its branch cuts at the line $e^{i\theta_{\sigma(N)}}\R_+$. This implies that we rotate everything and instead of splitting $\Gamma_N=\Gamma_N^{+}\cup\Gamma_N^-$, we split the contour encircling $e^{i\theta_{\sigma(N)}}$ instead.

Moreover, there is a permutation $\ell'_k=\ell_{\sigma(k)}$ of the leaves and a permutation $p'_k=p_{\tau(k)}$ of the critical points with $p'_K=p_N=p_{\pm}$, such that the leaves $\ell'_1,\dots,\ell'_{N-1}$ and internal modes $p'_1,\dots,p'_{N}$ are pruned in this order. We denote $D'_k=D_{\tau(k)}$.

We will construct steepest descent contours $W'_k$ such that the following properties hold.
\begin{itemize}
\item If $k\leq K$, the contour $W'_k$ stays inside the level set $\{z\in\C^*\mid \Re(S_{\lambda}(z))\leq L_k'\}$, it encloses $\ell'_k$ and eventually some other leaves in $D'_k\subseteq\{\ell'_1,\dots,\ell'_{k-1}\}$, and eventually some leaves in $\{\ell'_K,\dots,\ell'_{N-1}\}$. This is possible by construction: at $p_k'$, the mountain at $\ell_k'$ merges with all the leaves in $S_k'$ (and eventually $\ell'_N$), so that they form a connected component of $\{z\in\C^*\mid \Re(S_{\lambda}(z))\leq L_k'\}$. There is always a way not to encircle $\ell'_N$ in our choice of contour.

\item if $K+1\leq k\leq N$, the contour $W'_k$ stays inside the level set $\{z\in\D^*\mid \Re(S_{\lambda}(z))\leq L_k'\}$, and encloses $\ell'_k$, plus eventually some other leaves in $S'_k\subseteq\{\ell'_K,\dots,\ell'_{k-1}\}$.
Note that when $k\geq K+1$, then the land is very dry so every group of lakes is isolated inside one portion of $\D$ delimited by $\{z\in\C^*\mid\Re(S_{\lambda}(z))=0\}$. Hence we can assume that the corresponding contour stays inside this portion of $\D$ and does not encircle any leaf in $\{\ell'_1,\dots,\ell'_{K-1}\}$.
\end{itemize}

\paragraph{Choice of steepest descent contours}
It now remains to define the steepest descent contours $W_l$. Let $l\neq N$, and $k\neq K$ such that $W_l=W'_k$.

We recall that if $k\leq K-1$, by construction, the connected group of mountains containing $\ell'_k$ fuses with an other group of mountains when the sea level decreases from $L_l^+$ to $L_l^-$. Moreover, the connected group of mountains containing $\ell'_k$ after fusion (when $L=L_l^-$) is only comprised from mountains that belong to $D_k$.
Similarly, if $K+1\leq k\leq N$,  the connected group of mountains containing $\ell'_k$ fuses with an other group of mountains. The connected group of mountains containing $\ell'_k$ after fusion is comprised from mountains that belong to $D_k$, maybe some mountains in the first sub-tree $\{\ell'_1,\dots,\ell'_{M-1}\}$.

As a consequence, in any of these cases, it is possible to enclose the mountain $\ell'_k$ while enclosing only the other leaves mentioned here by some path. We modify this path a little so that this path is a steepest descent path by taking:
\begin{itemize}
\item a short arc of the steepest decent path at the vicinity of $p_k'$, descending to the level $L_k-\delta_k$, hence staying in $\{z\in \D^*\mid \Re(S_{\lambda}(z))\leq L_k\}$;
\item going from each of the two extremities of the steepest descent path, any path staying inside the set $\{z\in\D^*\mid \Re(S_{\lambda}(z))\leq L_k-\delta_k\}$, and enclosing the mountain $\ell'_k$, then going to a neighborhood of $0$, then going to two lakes $r_k e^{i\theta_1}$ resp. $r_k e^{i\theta_2}$;
\item the segments of the form $[0,r_k e^{i\theta_m}]$ and  $[0,r_k e^{i\theta_n}]$.
\end{itemize}
We orient this loop counterclockwise. The contour does not enclose the leaf $\ell'_N$ by assumption.

If $k=K$ and $l=N$, then as $L$ goes to $0^+$ to $0^-$, we know that one continent  fuses with the vicinity of the unit disk, whereas one lake localized around $(0,r_0e^{i\theta_{m}})$ becomes separated from it. We define two paths $(W'_{K})^{\pm}=W_N^{\pm}$ so that $W'_K=(W'_K)^+\cup(W'_K)^-$ encircles $\ell_N'$ and a subset of $\{\ell_1',\dots,\ell_{K-1}'\}$, and:
\begin{itemize}
\item a short arc of the steepest descent path at the vicinity of $p_{\pm}$, descending to the level set $\{z\in\C^*\mid \Re(S_{\lambda}(z))\leq -\delta_K\}$;
\item  any path staying inside the set $\{z\in\C^*\mid \Re(S_{\lambda}(z))\leq -\delta_K\}$  and joining the outside extremity of the steepest descent path around $p_{\pm}$ to $e^{i\theta_{\sigma(N)}}$;
\item any path staying inside the set $\{z\in\C^*\mid \Re(S_{\lambda}(z))\leq -\delta_K\}$  and joining the inside extremity of the steepest descent path around $p_{\pm}$ to $r_{\pm} e^{i\theta_{m}}$;
\item the segment $[0,r_{\pm} e^{i\theta_m}]$.
\end{itemize} 
We also orient these paths counterclockwise.

\paragraph{Vanishing of the Evans function}

Let $V=(v_1,\dots,v_N)$. We have seen that $AV=0$ if and only if~\eqref{eq:syst1_bis} and~\eqref{eq:syst2} combined with~\eqref{eq:fk_vj} hold, i.e. for $l\leq N-1$,
\begin{equation*}
\int_{\Gamma_l}e^{Q(\zeta)/\varepsilon}\sum_{j=1}^Nv_j\zeta^{-j-\lambda/\varepsilon}\frac{\dd\zeta}{\zeta}=0,
\end{equation*}
\begin{equation*}
(e^{-2i\pi\lambda/\varepsilon}-1)\int_{\Gamma_{\sigma(N)}^+}e^{Q(\zeta)/\varepsilon}\sum_{j=1}^Nv_j\zeta^{-j-\lambda/\varepsilon-1}\dd\zeta=\int_{\partial \D}z^{-\lambda}e^{Q(z)/\varepsilon}\sum_{j=1}^Nv_jz^{-j-1}\dd z.
\end{equation*}

Since each contour $\Gamma_l$ encloses exactly the mountain $\ell_l$, the $W'_k$ for $k\neq K$ can be written up to deformation (outside of $0$) as a sum of the $\Gamma_{\sigma(l)}$ for $l\neq N$. This deformation being in the domain where the integrand of the oscillatory integrals is holomorphic (see Definition~\ref{def:A}), the deformation does not change the total value of the integral. The coefficients for passing from the two sets of contours $(\Gamma_{\sigma(l)})_{l\neq N}$ to $(W'_k)_{k\neq K}$ will be of the form
\[
P=\left(
\begin{array}{ccc}
L_{K-1} & * \\
0 & L_{N-K} 
\end{array}
\right),
\] 
with $L_{K-1}$ and $L_{N-K}$ being two lower-triangular matrices with indicated dimension with ones on the diagonal. This implies that the matrix $P$ is invertible. As a consequence,  the first system~\eqref{eq:syst1_bis} is equivalent to the fact that for every $k\neq K$,
\begin{equation*}
\int_{W_k'}e^{Q(\zeta)/\varepsilon}\sum_{j=1}^Nv_j\zeta^{-j-\lambda/\varepsilon}\frac{\dd\zeta}{\zeta}=0.
\end{equation*}

Finally, the contour $(W_K')^+\cup\Gamma_{\sigma(N)}^+$  when following $\Gamma_{\sigma(N)}^+$ clockwise  is a loop not containing $\ell_N'$. Removing the first equation~\eqref{eq:syst1_bis} with the encircled leaves to the left-hand side of the second equation~\eqref{eq:syst2} for each non-encircled leaves $\ell_k$ in $W_K'$ gives a path that one can deform into $(W_K')^+$ by staying in the holomorphic part of the integrand, whereas $\partial\D$ can be deformed into $(W_K')^+\cup (W_K')^-$. We transform this equation into
\begin{equation*}
-(e^{-2i\pi\lambda/\varepsilon}-1)\int_{(W_K')^+}e^{Q(\zeta)/\varepsilon}\sum_{j=1}^Nv_j\zeta^{-j-\lambda/\varepsilon-1}\dd\zeta=\int_{(W_K')^+\cup(W_K')^-}z^{-\lambda}e^{Q(z)/\varepsilon}\sum_{j=1}^Nv_jz^{-j-1}\dd z.
\end{equation*}
This leads to the fact that $AV=0$ if and only if $BV=0$, hence to  Proposition~\ref{prop:contours}.

\subsection{Eigenvalues outside the bulk}\label{part:contours_outside}

The eigenvalues outside follow the description from Appendix A in~\cite{MillerWetzel2016}. The strategy is similar as part~\ref{part:contours_inside}, but the node $p_{\pm}$ becomes replaced by one of the critical points $p_K$ that plays a special role.

\begin{mydef}[Suitable contours]
The sequence of loops $W_1,\dots, W_N$ is suitable if:
\begin{enumerate}
\item for every $1\leq k\leq N$, the loop $W_k$ starts along the segment $[0,r_ke^{i\theta_k}]$ and finishes along the segment $[r_k e^{i\theta_{k+1}},0]$ for small enough $r_k>0$;
\item each contour $W_k$ for $1\leq k\leq N$ passes through exactly one critical point $p_k(\lambda)$ of $S_{\lambda}$, and $\Re(S_{\lambda})$ is maximized along $W_k$ at $p_k(\lambda)$.
\end{enumerate}
\end{mydef}
As a consequence of this definition, a family of suitable contours satisfies that for every critical point of $S_{\lambda}$ in $\overline{\D}$, there is exactly one contour which passes through this point.

\begin{prop}[Existence of  suitable contours]\label{prop:contours_outside}
There exists a family of suitable contours $(W_k)_{1\leq k\leq N}$ with $W_N=W_N^+\cup W_N^-$ such that the following holds. Let $B(\lambda;\varepsilon)$ be the matrix with coefficients given for $1\leq k\leq N-1$ and $1\leq \ell\leq N$ by
\[
B_{k,\ell}=\int_{W_k}e^{Q(\zeta)/\varepsilon}\zeta^{-\ell-1-\lambda/\varepsilon}\dd\zeta,
\]
\[
B_{k,N}^{\pm}=\int_{W_N^{\pm}}e^{Q(\zeta)/\varepsilon}\zeta^{-\ell-1-\lambda/\varepsilon}\dd\zeta,\]
\[
B_{k,N}=B_{k,N}^++e^{-2i\pi\lambda/\varepsilon}B_{k,N}^-.
\]
Then $\det(B(\lambda;\varepsilon))=0$ if and only if $\det(A(\lambda;\varepsilon))=0$.
\end{prop}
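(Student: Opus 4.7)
The plan is to adapt the argument of Section~\ref{part:contours_inside} (small eigenvalues) by replacing the role of the pair of real critical points $p_\pm\in\partial\D$ (at which the ``continent'' touches $\partial\D$ at the critical level $L=0$) by a single interior critical point $p_K\in\D$ at which a ``mainland'' extending inward from $\partial\D$ first fuses with a group of mountains, at some critical level $L_K$ depending on $\lambda$. Concretely, since $\lambda$ is a large eigenvalue, $u+\lambda$ has constant sign on $\partial\D$, hence $\frac{\dd}{\dd r}\Re(S_\lambda(re^{ix}))|_{r=1}=-(u(e^{ix})+\lambda)$ has constant sign. This means that, just inside $\partial\D$, $\Re(S_\lambda)$ has constant sign as well, so there is a ``mainland'' at $\partial\D$: a connected neighborhood of $\partial\D$ in $\overline{\D}$ on which $\Re(S_\lambda)$ stays on the appropriate side of $0$. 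This replaces the two arcs of $\partial\D$ that appear in the small eigenvalue case.

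First, I would redo the level set analysis of $\Re(S_\lambda)$ in $\overline{\D}$. The behavior near $z=0$ is identical to Section~\ref{part:contours_inside} (alternating infinite lakes and mountains around the rays $(0,r_0 e^{i\widetilde\theta_k})$), and the critical values $L_k$ correspond to the $N$ interior critical points $p_1,\dots,p_N$ from Lemma~\ref{lem:def_pk}. As $L$ decreases from $+\infty$, at each $L_k$, either two distinct groups of mountains fuse at $p_k$ (as described by Lemma~\ref{lem:fusion}), or a group of mountains fuses with the mainland at $p_k$. Exactly one such merging event is of the second type (the mainland is the unique connected component that eventually reaches $\partial\D$); relabel the critical points so that this event occurs at $L_K$ through $p_K$.

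From here, one builds the same binary tree as in Section~\ref{part:contours_inside}: the $N$ leaves $\ell_1,\dots,\ell_N$ are the infinite mountains near $0$, the $N-1$ internal nodes $p_k$ for $k\neq K$ each have two children, and the special node $p_K$ has a single child (the group of mountains absorbed into the mainland at $L_K$). The tree-pruning algorithm runs unchanged, producing permutations of the leaves and internal nodes and splitting the tree at $p_K$ into two sub-trees. One then chooses the branch cut of $\log z$ along $e^{i\theta_{\sigma(N)}}\R_+$ with $e^{i\theta_{\sigma(N)}}$ on the arc of $\partial\D$ inside the continent rooted at $p_K$, and constructs the loops $W_k$ exactly as before: for $k\neq K$, $W_k$ hits its assigned critical point via a short steepest descent arc, returns through lakes and segments to $0$ while enclosing the prescribed mountains; for the contour $W_N=W_K^+\cup W_K^-$ through $p_K$, the steepest descent path at $p_K$ can be continued on both sides until it meets $\partial\D$ on either side of the branch cut, at which point one follows $\partial\D$ down to $e^{i\theta_{\sigma(N)}}$ from each side, giving the two halves $W_N^\pm$.

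Finally, I would conclude by comparing the linear systems, exactly as at the end of Section~\ref{part:contours_inside}: the change of basis matrix from the original loops $(\Gamma_k,\Gamma_N^{\pm})$ to the steepest descent loops $(W_k,W_N^{\pm})$ is block upper-triangular with lower-triangular blocks having $1$'s on the diagonal (the pruning algorithm ensures at each step that the loop being introduced encloses exactly one new mountain relative to the already-treated ones), hence invertible, so the vanishing of the two determinants is equivalent: $\det(B(\lambda;\varepsilon))=0 \iff \det(A(\lambda;\varepsilon))=0$. The main obstacle I anticipate is verifying that the continent rooted at $p_K$ indeed reaches $\partial\D$ along an arc (so that a valid $e^{i\theta_{\sigma(N)}}$ exists and $W_N^\pm$ can be constructed coherently on either side of the branch cut), and ensuring that the monodromy factor $e^{-2i\pi\lambda/\varepsilon}$ produced by crossing the cut matches the one appearing in the original system~\eqref{eq:syst2}; both reduce to checking that, near $\partial\D$, the steepest descent arcs emanating from $p_K$ stay on the correct side of the zero set of $\Re(S_\lambda)$, which follows from the sign analysis of $u+\lambda$ on $\partial\D$ performed in the first step.
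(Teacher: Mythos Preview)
Your overall strategy---replace the pair $p_\pm$ by a single interior critical point $p_K$, rerun the tree-pruning algorithm, and deduce the equivalence from a block-triangular change of basis---is exactly the paper's approach, and the parts concerning the contours $W_k$ for $k\neq K$ are fine. However, your description of the special event at $p_K$ is off, and this leads to an incorrect construction of $W_N^\pm$.

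Your sign computation on $\partial\D$ is correct: $\Re(S_\lambda)=0$ on $\partial\D$ and is positive just inside. But this means the ``mainland'' at $\partial\D$ is only above water once the sea level $L$ has dropped to $0$, so it cannot participate in a merging event at any positive critical level. What actually happens at $L_K$ (and your sign analysis is consistent with this) is that an \emph{already connected} group of mountains self-fuses at $p_K$ so as to encircle the origin; equivalently, the ocean connecting $0$ to $\infty$ gets pinched off at $p_K$. Since $\partial\D$ lies at level $0$ and is eventually absorbed into this continent, one has $L_K>0$.

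This changes $W_N^\pm$. At the saddle $p_K$ the two steepest-descent directions point into the ocean on opposite sides of the pinch: one goes \emph{inward} toward the lakes around the origin, the other \emph{outward} into the basin containing $\partial\D$. You cannot continue both arcs to $\partial\D$; the inward branch never reaches $\partial\D$ without climbing back over the continent. Each of $W_N^\pm$ must therefore run from a segment $[0,r_Ke^{i\theta_m}]$ in a lake near the origin, through $p_K$ along the steepest-descent arc, then out (staying in the region $\{0\le\Re(S_\lambda)\le L_K-\delta_K\}\subset\overline{\D}$) to a point of $\partial\D$, and finally along $\partial\D$ to the branch point---clockwise for one half, counterclockwise for the other. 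In particular $W_N^+\cup W_N^-$ encircles the origin, just as $\Gamma_N^+\cup\Gamma_N^-$ does, which is what makes the deformation legitimate and produces the monodromy factor $e^{-2i\pi\lambda/\varepsilon}$ correctly. With this correction (and the branch cut simply along $\R_+$, since there is no distinguished leaf direction on $\partial\D$ in this case), your triangular change-of-basis argument finishes the proof.
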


\paragraph{Description of the level sets} Again, we start with a description of the level sets. An illustration of the level sets appears in figure~\ref{fig:level0_outside}.

\begin{figure}
\begin{center}
\includegraphics[scale=0.5]{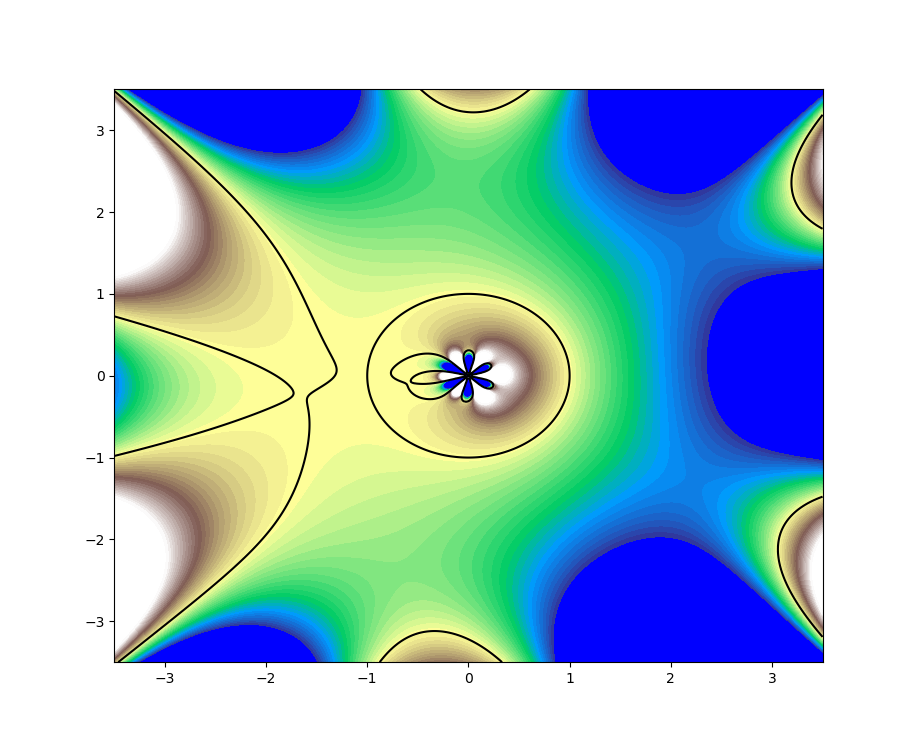}
\caption{Map of the landscape when $u(x)=8\cos(x)-\sin(2x)/2+ \sin(4x)/4+\cos(6x)/10$ and $\lambda=7.95$. The green to dark blue color represents very low values of $\Re(S)$ (lakes) whereas the brown to white color represents very high values of $\Re(S)$ (mountains with snow). The black curve is the zero level set.}\label{fig:level0_outside}
\end{center}
\end{figure}

First, at the sea level $L=0$, we know that $\Re(S_{\lambda})=0$ on the circle $\partial\D$, moreover, since $S_{\lambda}'(z)=-(u+\lambda)/z$, a Taylor expansion of $S_{\lambda}(r z)$ around $z\in\partial\D$ and $r\to 1$ implies that $\Re(S_{\lambda}(z))>0$ for $|z|\to 1^-$ and $\Re(S_{\lambda}(z))<0$ for $|z|\to 1^+$.

When $L\gg 0$, there are isolated infinite mountains localized around the lines $(0,r_0 e^{2ik\pi/N})$ for small enough $\delta$. These mountains are separated by a big ocean connecting 0 to $\infty$. As $L$ decreases, the isolated mountains become connected at critical sea levels $L=L_k$, forming lakes which are not connected to the big ocean. Moreover, there exists one critical sea level $L=L_K$ such that going from $L_K^+$ to $L_K^-$, the point $0$ becomes disconnected from $\infty$. Equivalently, the mountains that merge together at the critical point were already connected before, and at $L=L_K^-$ they encircle $0$, so that they become a {\it continent}. We note that at $L=0$, the circle $\partial\D$ is already part of the continent, so this particular sea level satisfies $L_K>0$. When $L\ll 0$, all the mountains are connected, and there are isolated infinite lakes localized around the lines $(0,r_0 e^{i(2k+1)\pi/N})$ for small enough $r_0$.

By a counting argument, at every level $L_k$, $1\leq k\leq N$, $k\neq K$, two groups of mountains that were not connected before merge together at the node $p_k$, and at some level $L_K$, one group of mountains becomes a continent at the node $p_K$.

We denote the leaves $\ell_1,\dots,\ell_N$ as a label for the mountains localized around the lines $(0,r_0 e^{2ik\pi/N})$ when $L\gg 0$.

\paragraph{Tree pruning algorithm}

We construct a tree as before, for which the node $p_K$ at the level set $L_K$ where one group of mountains becomes a continent replaces the role of the level $p_{\pm}$.

\paragraph{Choice of steepest descent contours}

The choice of contours is also similar except for the path $(W'_K)^{\pm}=W_N^{\pm}$, which both path through $p_K$ but with different orientations. In this case, the ocean, which contains some lake around $e^{i\theta_m}$ when $L=L_K^+$, becomes separated from $\infty$. We take:
\begin{itemize}
\item a short arc of the steepest descent path at the vicinity of $p_K$, descending to the level set $L_K-\delta_K$ for $\delta_K$ small enough so that $L_K-\delta_K>0$;
\item any path staying inside the set $\{z\in\C\mid\Re(S_{\lambda}(z))\leq L_K-\delta_K\}$ and joining the inside extremity of the steepest descent path to $r_K e^{i\theta_m}$ for some $\theta_m$;
\item the segment $[0,r_K e^{i\theta_m}]$;
\item any path staying inside the set $\{z\in\C\mid 0\leq \Re(S_{\lambda}(z))\leq L_K-\delta_K\}$ and joining the outside extremity of the steepest descent path to a point $e^{i\theta}\in\partial\D$;
\item the arc of circle on $\partial\D$ from   $e^{i\theta}$ to $1$ in the clockwise direction if we consider $(W_N')^+$, in the counterclockwise direction if we consider $(W_N')^-$.
\end{itemize}
We choose the determination of the logarithm to be defined in $\C\setminus\R_+$ in this case.

\paragraph{Vanishing of the Evans function}

We recall that since each contour $\Gamma_l$ encloses exactly the mountain $\ell_l$, the steepest descent contours $W'_k$ for $k\neq K$ defined as in part~\ref{part:contours_inside} can be written up to deformation (outside of $0$) as a sum of the $\Gamma_{\sigma(l)}$ for $l\neq N$. This deformation being in the domain where the integrand of the oscillatory integrals is holomorphic, the deformation does not change the total value of the integral, so that the coefficients for passing from the two sets of contours $(\Gamma_{\sigma(l)})_{l\neq N}$ to $(W'_k)_{k\neq K}$ are of the form
\[
P=\left(
\begin{array}{ccc}
L_{K-1} & * \\
0 & L_{N-K} 
\end{array}
\right),
\] 
with $L_{K-1}$ and $L_{N-K}$ being two lower-triangular matrices with ones on the diagonal. This implies that the matrix $P$ is invertible. As a consequence,  the first system~\eqref{eq:syst1_bis} is equivalent to the fact that for every $k\neq K$,
\begin{equation*}
\int_{W_k'}e^{Q(\zeta)/\varepsilon}\sum_{j=1}^Nv_j\zeta^{-j-\lambda/\varepsilon}\frac{\dd\zeta}{\zeta}=0.
\end{equation*}

The contour $(W_K')^+\cup\Gamma_{\sigma(N)}^+$  when following $\Gamma_{\sigma(N)}^+$ clockwise is a loop encircling some leaves $\ell_k'$, but not $\ell_N'$.
However adding the first equation~\eqref{eq:syst1_bis} with index $k$ for each leaf $\ell_k$ encircled to the left-hand side of the second equation~\eqref{eq:syst2}, we  get
\begin{equation*}
-(e^{-2i\pi\lambda/\varepsilon}-1)\int_{\Gamma_{\sigma(N)}^+}e^{Q(\zeta)/\varepsilon}\sum_{j=1}^Nv_j\zeta^{-j-\lambda/\varepsilon-1}\dd\zeta=\int_{\partial \D}z^{-\lambda}e^{Q(z)/\varepsilon}\sum_{j=1}^Nv_jz^{-j-1}\dd z
\end{equation*}
 gives a path that one can deform into $(W_K')^+$, whereas $\partial\D$ can be deformed into $(W_K')^+\cup (W_K')^-$ without changing the value of the integral. We get
\begin{equation*}
-(e^{-2i\pi\lambda/\varepsilon}-1)\int_{(W_K')^+}e^{Q(\zeta)/\varepsilon}\sum_{j=1}^Nv_j\zeta^{-j-\lambda/\varepsilon-1}\dd\zeta=\int_{ (W_K')^+\cup(W_K')^-}z^{-\lambda}e^{Q(z)/\varepsilon}\sum_{j=1}^Nv_jz^{-j-1}\dd z.
\end{equation*}
Hence one can replace the contours $\Gamma_k$ by $W_k'$, which leads to the fact that $AV=0$ if and only if $BV=0$, hence to  Proposition~\ref{prop:contours}.


\subsection{Asymptotic expansion of small eigenvalues}\label{part:steepest_descent}
In this part, we  assume that $\lambda\in\widetilde{\Lambda_-}(\delta)$.
Thanks to Proposition~\ref{prop:contours} and Theorem~\ref{thm:evans}, the eigenvalue equation~\eqref{eq:evans1} becomes
\begin{equation*}
\det(B(\lambda;\varepsilon))=0.
\end{equation*}
We deduce the asymptotics of the eigenvalues by using the method of steepest descent on every coefficient of the matrix $B(\lambda;\varepsilon)$.

For $1\leq k\leq N-1$ and $1\leq\ell\leq N$, we recall that the coefficients of $B(\lambda;\varepsilon)$ are given in Proposition~\ref{prop:contours} by
\[
B_{k,\ell}
	=\int_{W_k}e^{S_{\lambda}(\zeta)/\varepsilon}\zeta^{-\ell-1}\dd\zeta,
\]
\[
B_{N,\ell}:=B_{N,\ell}^++e^{-2i\pi\lambda}B_{N,\ell}^-,
\]
\[
B_{N,\ell}^{\pm}
	=\int_{W_N^{\pm}}e^{S_{\lambda}(\zeta)/\varepsilon}\zeta^{-\ell-1}\dd\zeta,
\]
with
\[
S_{\lambda}(z)=Q(z)-\lambda\log(z).
\]
Let us apply the method of steepest descent to these oscillatory integrals.

\paragraph{Method of steepest descent}

Let $1\leq\ell\leq N$. For $1\leq k\leq N-1$, we have
\begin{align*}
B_{k,\ell}
	&=\sqrt{\frac{2\pi\varepsilon}{|S_{\lambda}''(p_k)|}}e^{i\theta_k-S_{\lambda}(p_k)/\varepsilon} p_k^{-\ell-1}+R_{k,\ell}(\varepsilon),
\end{align*}
where $\theta_k$ is the steepest descent angle. Similarly, 
\[
B_{N,\ell}^{\pm}
	=\sqrt{\frac{2\pi\varepsilon}{|S_{\lambda}''(p_{\pm})|}}e^{i\theta_N^{\pm}-S_{\lambda}(p_{\pm})/\varepsilon} p_{\pm}^{-\ell-1}+R_{N,\ell}^{\pm}(\varepsilon),
\]
and the steepest descent angles are $\theta_N^+=-\frac{\pi}{4}$ and $\theta_N^-=\frac{\pi}{4}$ since this corresponds to the stationary phase method.
The remainder terms are bounded by
\[
|R_{k,l}(\varepsilon)|
	\leq C_u(\delta)\varepsilon.
\]




\paragraph{Asymptotic expansion of the Evans function}

Expanding the determinant we deduce that $\det(B(\lambda;\varepsilon))$ has the form
\begin{multline*}
\det(B(\lambda;\varepsilon))
	=(2\pi\varepsilon)^{N/2}\prod_{k=1}^{N-1}\frac{e^{i\theta_k-S_{\lambda}(p_k)/\varepsilon}}{\sqrt{|S_{\lambda}''(p_k)|}}
	\frac{e^{-i\pi/4-S_{\lambda}(p_+)/\varepsilon}}{\sqrt{|S_{\lambda}''(p_+)|}}\det(V_+)\\
	 +(2\pi\varepsilon)^{N/2}\prod_{k=1}^{N-1}\frac{e^{i\theta_k-S_{\lambda}(p_k)/\varepsilon}}{\sqrt{|S_{\lambda}''(p_k)|}}\frac{e^{-2i\pi\lambda+ i\pi/4-S_{\lambda}(p_-)/\varepsilon}}{\sqrt{|S_{\lambda}''(p_-)|}}\det(V_-)
	  +(2\pi\varepsilon)^{N/2} R(\sqrt{\varepsilon}),
\end{multline*}
where $V_{\pm}$ is the Vandermonde matrix with coefficients
\[
(V_{\pm})_{k,\ell}=p_k^{-\ell-1}, \quad p_N=p_{\pm},
\]
and the remainder term satisfies that if $|S_{\lambda}''(p_k)|\geq\frac{1}{C_u(\delta)}$ and $|p_{k}|\leq C_u(\delta)$ for every $1\leq k\leq N-1$ or $k=\pm$, then
\[
|R(\sqrt{\varepsilon})|
	\leq C 2^NN!  (C_u(\delta))^{CN^2} \sqrt{\varepsilon}.
\]

We simplify the Vandermonde determinants
\begin{equation*}
\det(V_{\pm})
	=\prod_{1\leq j<k\leq N}(p_j^{-1}-p_k^{-1})\prod_{k=1}^Np_k^{-2}
	=\prod_{1\leq j<k\leq N}(p_k-p_j)\prod_{k=1}^Np_k^{-(N+1)}.
\end{equation*}
We deduce that
\begin{equation}\label{eq:B_dev}
\det(B(\lambda;\varepsilon))
	=(2\pi\varepsilon)^{N/2}\left(P(\lambda;\varepsilon)
	\Delta(\lambda;\varepsilon)+R(\sqrt{\varepsilon})\right),
\end{equation}
\begin{equation}\label{eq:P}
P(\lambda;\varepsilon)
	=\frac{e^{-i\pi/4-S_{\lambda}(p_+)/\varepsilon}}{\sqrt{|S_{\lambda}''(p_+)|}}\det(V_+)\prod_{k=1}^{N-1}\frac{e^{i\theta_k-S_{\lambda}(p_k)/\varepsilon}}{\sqrt{|S_{\lambda}''(p_k)|}},
\end{equation}
\[
\Delta(\lambda;\varepsilon) =
		1 + e^{-2i\pi\lambda+i\pi/2-(S_{\lambda}(p_-)-S_{\lambda}(p_+))/\varepsilon}\frac{\sqrt{|S_{\lambda}''(p_+)|}}{\sqrt{|S_{\lambda}''(p_-)|}}\frac{p_-^{-(N+1)}}{p_+^{-(N+1)}}\prod_{k=1}^{N-1}\frac{p_k-p_-}{p_k-p_+}.
\]
Using Lemma~\ref{lem:S''}, we see that the quotient $\frac{\sqrt{|S_{\lambda}''(p_+)|}}{\sqrt{|S_{\lambda}''(p_-)|}}\frac{p_-^{-(N+1)}}{p_+^{-(N+1)}}\prod_{k=1}^{N-1}\frac{p_k-p_-}{p_k-p_+}$ has modulus $1$, hence there exists $\psi(\lambda)\in\T$ such that
\begin{equation}\label{eq:Delta}
\Delta(\lambda;\varepsilon)=1-e^{2i\pi\psi(\lambda)-(S_{\lambda}(p_-)-S_{\lambda}(p_+))/\varepsilon}.
\end{equation}
Since $\arg(p_+)-\arg(p_-)=2\pi F(\lambda)$, the angle $\psi(\lambda)$ is equal to
\begin{equation}\label{eq:psi}
\psi(\lambda)=-\frac{1}{4}-\lambda+(N+1) F(\lambda)+\frac{1}{2\pi}\sum_{k=1}^{N-1}\arg(p_-(\lambda)-p_k(\lambda))-\arg(p_+(\lambda)-p_k(\lambda)).
\end{equation}
In this case, $\psi$ is a $C_u(\delta)$-Lipschitz function of $\lambda$ when the critical points are on a compact set of values for which the $p_k$ have single multiplicity, for instance when $\lambda\in\widetilde{\Lambda_-}(\delta)\cup\widetilde{\Lambda_+}(\delta)$. 
In this case, we get smoothness of the simple roots $p_k,p_+,p_-$ along the parameter $\lambda$, so that $\psi$ is a Lipschitz function of $\lambda$. 
Note that when $u$ is even, then $p_+=\overline{p_-}$, moreover $p_k$ and $\overline{p_k}$ both appear in the sum for every $k$, so that
\begin{equation}\label{eq:psi_even}
\psi(\lambda)=-\frac{1}{4}-\lambda+(N+1) F(\lambda).
\end{equation}

We now estimate $P(\lambda;\varepsilon)$. We know that by definition, $S_{\lambda}''(p_k)=u'(p_k)$, so
\[
|S_{\lambda}''(p_k)|
	=c_N|p_k|^{-N}|p_+-p_k||p_--p_k|\prod_{\substack{j=1\\j\neq k}}^{N-1}|p_k-p_j| \prod_{\substack{j=1}}^{N-1}|p_k-1/\overline{p_j}|.
\]
Using Lemma~\ref{lem:S''} for $p_+$, we deduce that
\begin{multline*}
|S_{\lambda}''(p_+)|\prod_{k=1}^{N-1}|S_{\lambda}''(p_k)|\\
	=c_N^N|p_+-p_-|\prod_{k=1}^{N-1}\frac{|p_+-p_k|^3|p_--p_k|}{|p_k|^{N+1}}\prod_{1\leq j\neq k\leq N-1}|p_k-p_j|\prod_{1\leq j,k\leq N-1}|p_k-1/\overline{p_j}|
\end{multline*}
belongs to $\left[\frac{1}{C_u(\delta)},C_u(\delta)\right]$. Similarly, $|\det(V_{\pm})|\in \left[\frac{1}{C_u(\delta)},C_u(\delta)\right].$
As a consequence,
\[
|P(\lambda;\varepsilon)|\in \left[\frac{1}{C_u(\delta)},C_u(\delta)\right].
\]

The eigenvalue equation $\det(B(\lambda;\varepsilon))=0$ combined with expression~\eqref{eq:B_dev} therefore becomes
\[
\frac{S_{\lambda}(p_+)-S_{\lambda}(p_-)}{i\varepsilon}=2\pi\left(\psi(\lambda)+n+R_n'(\sqrt{\varepsilon})\right),
\]
\[
|R_n' (\sqrt{\varepsilon})|\leq C_u(\delta)\sqrt{\varepsilon},\quad n\in\Z.\]
Finally, Lemma~\ref{lem:aire}
\(
S_{\lambda}(p_+)-S_{\lambda}(p_-)
	=2i\pi\int_{-\lambda}^{\max(u)}F(\nu)\dd\nu
\)
implies that
\begin{equation}\label{eq:small-eigenvalues}
\int_{-\lambda}^{\max(u)}F(\nu)\dd\nu=\varepsilon(n+\psi(\lambda))+R'(\varepsilon\sqrt{\varepsilon}),
\end{equation}
\[
|R'(\varepsilon\sqrt{\varepsilon})|\leq C_u(\delta)\varepsilon\sqrt{\varepsilon}.
\]



%
%

\subsection{Asymptotic expansion of large eigenvalues}\label{part:steepest_descent_outside}

We now focus on the case when $\lambda\in\widetilde{\Lambda_+}(\delta)\subseteq[-\min(u)+\delta,K(\delta)]$.
Thanks to Proposition~\ref{prop:contours_outside} and Theorem~\ref{thm:evans}, the eigenvalue equation~\eqref{eq:evans1} becomes
\begin{equation*}
\det(B(\lambda;\varepsilon))=0.
\end{equation*}
We recall that using Proposition~\ref{prop:contours_outside}, the coefficients of $B$ are given for $1\leq k\leq N-1$ and $1\leq\ell\leq N$ by
\[
B_{k,\ell}
	=\int_{W_k}e^{S_{\lambda}(\zeta)/\varepsilon}\zeta^{-\ell-1}\dd\zeta,
\]
\[
B_{N,\ell}:=B_{N,\ell}^++e^{-2i\pi\lambda}B_{N,\ell}^-,
\]
\[
B_{N,\ell}^{\pm}
	=\int_{W_N^{\pm}}e^{S_{\lambda}(\zeta)/\varepsilon}\zeta^{-\ell-1}\dd\zeta.
\]
We deduce the asymptotics of the eigenvalues by using the method of steepest descent on every coefficient of the matrix $B$: for $1\leq k\leq N-1$ and $1\leq\ell\leq N$,
\begin{equation*}
B_{k,\ell}	
	=\sqrt{\frac{2\pi\varepsilon}{|S_{\lambda}''(p_k)|}}e^{i\theta_k-S_{\lambda}(p_k)/\varepsilon}p_k^{-\ell-1} +R_{k,\ell}(\varepsilon),
\end{equation*}
where $\theta_k$ is the steepest descent angle, and for $k=N$ and $1\leq \ell\leq N$, we have
\[
B_{N,\ell}
	=-\left(1-e^{-2i\pi\lambda}\right)
	\sqrt{\frac{2\pi\varepsilon}{|S_{\lambda}''(p_{N})|}}e^{i\theta_N-S_{\lambda}(p_N)/\varepsilon}p_N^{-\ell-1}+R_{N,\ell}(\varepsilon),
\]
moreover, the remainder terms are bounded by
\[
|R_{k,\ell}(\varepsilon)|
	\leq C_u(\delta)\varepsilon.
\]

The determinant of $B$ therefore admits the asymptotic expansion
\[
\det(B(\lambda;\varepsilon))=(2\pi\varepsilon)^{N/2}\left(1-e^{-2i\pi\lambda}\right)
\left(\prod_{k=1}^N\frac{e^{i\theta_k-iS_{\lambda}(p_k)/\varepsilon}}{\sqrt{|S_{\lambda}''(p_k)|}}\right)\det(V)+(2\pi\varepsilon)^{N/2} R(\sqrt{\varepsilon}),
\]
where $V$ is the Vandermonde matrix with coefficients $V_{k,\ell}=p_k^{-\ell-1}$. The determinant of $V$ is equal to
\[
\det(V)=\prod_{1\leq j<k\leq N}(p_j^{-1}-p_k^{-1})\prod_{k=1}^Np_k^{-2}=\prod_{1\leq j<k\leq N}(p_j-p_k)\prod_{k=1}^Np_k^{-(N+1)},
\]
therefore it does not vanish.  More precisely,  if $|S_{\lambda}''(p_k)|\geq\frac{1}{C_u(\delta)}$, $|p_{k}|\leq C_u(\delta)$ and $|p_{k}-p_{\ell}|\geq \frac{1}{C_u(\delta)}$ for every $k,\ell\in\{1,\dots,N-1,+,-\}$ and $k\neq \ell$, then
\[
\left(\prod_{k=1}^N\frac{1}{\sqrt{|S_{\lambda}''(p_k)|}}\right)|\det(V)|\geq \frac{1}{C_u(\delta)^{CN^2}}
\]
whereas
\[
|R(\sqrt{\varepsilon})|
	\leq C 2^NN!  (C_u(\delta))^{CN^2} \sqrt{\varepsilon}.
\]
We deduce that for $\varepsilon$ small enough,
\[
\left|\left(\prod_{k=1}^N\frac{e^{i\theta_k-S_{\lambda}(p_k)/\varepsilon}}{\sqrt{|S_{\lambda}''(p_k)|}}\right)\det(V)+R(\sqrt{\varepsilon})\right|\geq \frac{1}{C'_u(\delta)}.
\]
As a consequence, $\det(B(\lambda;\varepsilon))=0$ implies that $2\pi\lambda/\varepsilon=2\pi(R'(\sqrt{\varepsilon})+n)$, or 
\begin{equation}\label{eq:large-eigenvalues}
\lambda=n\varepsilon+\varepsilon R'(\sqrt{\varepsilon}),
\end{equation}
\[
|R'(\sqrt{\varepsilon})|\leq C_u(\delta)\sqrt{\varepsilon}.
\]

\section{Zero-dispersion limit for even trigonometric polynomials}\label{sec:zero}

We finally show that when $u$ is an even weakly bell shaped trigonometric polynomial (see Definition~\ref{def:bell shaped}), then the asymptotic expansion of the Lax eigenvalues is precise enough to determine the weak limit of solutions as $\varepsilon\to 0$. We rely on the fact that the difference of consecutive phase constants are multiple of $\pi$ when $u$ is even, see part~\ref{appendix:phase}.

First, in part~\ref{part:phase} we show that the phase constants satisfy a weak form of convergence 
\[
e^{i(\theta_{n+1}-\theta_n)(u;\varepsilon)}\approx e^{i\frac{x_+(-\lambda_n)+x_-(-\lambda_n)}{2}+i\pi}=-1,
\]
As this form of convergence is weaker than the required assumptions in Definition~1.5 of~\cite{bo_zero}, in part~\ref{part:time} we finally explain how to adapt the general strategy in order to prove our main Theorem~\ref{thm:zero}.

\subsection{Phase constants}\label{part:phase}



Let us first estimate the phase constants in the zero-dispersion limit.
\begin{prop}[Phase constants in the zero-dispersion limit]\label{prop:phase-zero}
Let $0<c<r<\frac 13$, and define the ``upside down'' terms as
\[
J:=\left\{n\geq 1\mid \lambda_n+\varepsilon\in\Lambda_-(\delta),\quad   e^{i(\theta_{n+1}-\theta_{n})(u;\varepsilon)}=1\right\}.
\]
Then for $\varepsilon<\varepsilon_0(\delta)$, there holds
\[
\varepsilon\sum_{n\in J} \sinc(\pi F(-\lambda_n))
	\leq C(\delta)\varepsilon^{r-c}+C\delta.
\]
\end{prop}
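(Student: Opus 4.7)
By Appendix~\ref{appendix:phase}, the evenness of $u$ forces $e^{i(\theta_{n+1}-\theta_n)(u;\varepsilon)} \in \{+1,-1\}$, so $J$ collects exactly the exceptional ``upside-down'' pairs, the expected generic value being $-1$. My plan is to derive a spectral identity that produces $\cos(\theta_{n+1}-\theta_n)$ as a weighted sum, and then compare the ``all-aligned'' value to the direct evaluation of a known functional of $u_0$.

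The first step is to exploit the commutator relation on $L^2_+(\T)$,
\[
[L_u, S]\, h = -i\varepsilon\, h - \langle uh, \bar z\rangle \, \mathds{1}, \qquad Sh = zh,
\]
which, inserted between eigenfunctions, yields the off-diagonal formula
\[
\langle Sf_n, f_m\rangle = \frac{i\varepsilon\, \nu_m}{\nu_n\,(\lambda_m - \lambda_n)}, \quad m \neq n, \qquad \nu_n := \langle \mathds{1}, f_n\rangle,
\]
together with the diagonal identity $\langle uf_n, \bar z\rangle = -i\varepsilon/\nu_n$. I would then test a known Fourier coefficient of $u_0$ — e.g.\@ $\widehat{u_0}(1)$, which is real when $u$ is even — against the spectral decomposition $\mathds{1} = \sum_n \nu_n f_n$. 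Separating the dominant $m = n\pm 1$ off-diagonal overlaps from the rest produces an identity of the form
\[
\widehat{u_0}(1) \;=\; \sum_n \alpha_n(\varepsilon)\, e^{i(\theta_{n+1}-\theta_n)} \;+\; \mathcal{R}(\varepsilon),
\]
where $\alpha_n(\varepsilon)$ encodes the consecutive-level overlap $\nu_{n+1}/\nu_n$ and $\mathcal{R}$ gathers all non-consecutive contributions.

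The second step is to compute $\alpha_n(\varepsilon)$ asymptotically using the steepest-descent machinery of Section~\ref{sec:contours}. The eigenfunctions $f_n$, hence the overlaps $\nu_n$ and the analogous $\hat{f_n}(1)$, are superpositions of saddle-point contributions at the two critical points $p_\pm(\lambda_n) = e^{\pm i x_\pm(-\lambda_n)}$. When the ratio $\nu_{n+1}/\nu_n$ recombines these at consecutive Lax levels, the interference of the two saddles produces the characteristic factor $\sin(\pi F(-\lambda_n))/(\pi F(-\lambda_n))$, since $2\pi F(-\lambda_n) = x_+(-\lambda_n) - x_-(-\lambda_n)$. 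To leading order this gives $|\alpha_n(\varepsilon)| \asymp \varepsilon\, \sinc(\pi F(-\lambda_n))$; combined with the Bohr--Sommerfeld spacing $\lambda_{n+1}-\lambda_n = \varepsilon/F(-\lambda_n) + O(\varepsilon\sqrt{\varepsilon})$ from Theorem~\ref{thm:lax_u_trigo}, the full sum $\sum_n \varepsilon\, \sinc(\pi F(-\lambda_n))$ is a Riemann quadrature for an explicit integral depending only on $u_0$.

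Finally, I would decompose $\cos(\theta_{n+1}-\theta_n) = -1 + 2\,\mathds{1}_J(n)$ and take the real part of the identity to obtain
\[
2\sum_{n\in J}\alpha_n(\varepsilon)\cos\phi_n \;=\; \mathrm{Re}\,\widehat{u_0}(1) + \sum_n \alpha_n(\varepsilon)\cos\phi_n - \mathrm{Re}\,\mathcal{R}(\varepsilon),
\]
where $\phi_n$ is a slowly varying auxiliary phase coming from the saddle geometry. The right-hand side is $O(\varepsilon^{r-c}) + O(\delta)$ by matching the two $O(1)$ leading terms via the Riemann-sum interpretation, and by using the steepest-descent remainder bounds of Section~\ref{sec:contours}; the exponent $r-c$ reflects the tradeoff between the $\varepsilon^{-r}$ effective number of bulk eigenvalues summed and the $\varepsilon^c$ tolerance at which the Bohr--Sommerfeld rule is applied uniformly. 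The main obstacle will be the uniform stationary-phase analysis of the eigenfunction amplitudes $\nu_n$ themselves, beyond the eigenvalue-level analysis of Section~\ref{sec:contours}; the $\delta$-exclusions in $\widetilde\Lambda_-(\delta)$ are needed precisely to avoid coalescing critical points and the band edges, which accounts for the additive $C\delta$ in the bound.
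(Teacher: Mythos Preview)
Your overall architecture is right: expand $\widehat{u_0}(1)$ spectrally so that the phase signs $e^{i(\theta_{n+1}-\theta_n)}\in\{\pm 1\}$ appear linearly with weights $\varepsilon\,\sinc(\pi F(-\lambda_n))$, then compare to the ``all $-1$'' value so that the discrepancy is exactly twice the sum over $J$. But you propose to manufacture the weights by a fresh steepest-descent analysis of the eigenfunction amplitudes $\nu_n=\langle \un,f_n\rangle$, and you correctly flag this as the main obstacle. The paper sidesteps that obstacle entirely.

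The paper's proof is a three-line comparison that imports everything from~\cite{bo_zero}. Inequality~(17) of~\cite{bo_zero} (specialized to $k=1$) already yields
\[
\Big|\widehat{u_0}(1)+\varepsilon\sum_{\lambda_n+\varepsilon\in\Lambda_-(\delta)}\sinc(\pi F(-\lambda_n))\,e^{i(\theta_{n+1}-\theta_n)}\Big|\leq C(\delta)\varepsilon^{r-c}+C\delta,
\]
and the $\sinc$ weights there come from Toeplitz-type identities and the inverse spectral formula, not from any saddle-point control of $\nu_n$. For the ``all $-1$'' comparison value, one takes the admissible approximate datum $u_0^{\varepsilon}$ from~\cite{bo_zero} with the \emph{same} Lax eigenvalues but $e^{i(\theta_{n+1}-\theta_n)(u_0^\varepsilon;\varepsilon)}\equiv -1$: Theorem~3.9 of~\cite{bo_zero} gives $\widehat{u_0^{\varepsilon}}(1)\approx \varepsilon\sum_n\sinc(\pi F(-\lambda_n))$, while Theorem~3.14 of~\cite{bo_zero} at $t=0$ identifies the limit of $\widehat{u_0^{\varepsilon}}(1)$ with the integral that is nothing but $\widehat{u_0}(1)$. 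Subtracting the two displays kills the $n\notin J$ terms and leaves $2\varepsilon\sum_{n\in J}\sinc(\pi F(-\lambda_n))$ bounded by the combined errors.

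So your Riemann-sum matching and the $\varepsilon^{r-c}$ exponent are already packaged inside Theorems~3.9 and~3.14 of~\cite{bo_zero}; no new stationary-phase analysis of $f_n$ or $\nu_n$ is needed. Your route could be made to work, but it reproves substantial parts of~\cite{bo_zero} and adds an eigenfunction-level steepest descent that the present paper never carries out.
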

Note that this series is only comprised of nonnegative terms since $F(-\lambda_n)\in [0,1]$, so that the general term of the series satisfies $\sinc(\pi F(-\lambda_n))\in [0,1]$.

\begin{proof}
In proof of Theorem~3.9 in~\cite{bo_zero}, one can see that inequality~(17) is still valid in the case $k=1$. This implies that for some fixed choice of constants $0<c<r<1/3$,
\begin{equation*}
\left|\widehat{u_0}(1)
	-\varepsilon\sum_{\substack{n_2=n_1\\\lambda_{n_1}+\varepsilon\in\Lambda_-(\delta)}} \sinc(\pi F(-\lambda_{n_1})) \frac{F(-\lambda_{n_1})e^{i(\theta_{n_1+1}-\theta_{n_1})}}{n_{2}-n_1-F(-\lambda_{n_1})}\right|
	\leq C(\delta)\varepsilon^{r-c}+C\delta,
\end{equation*}
which simplifies as
\begin{equation}\label{eq:(17)}
\left|\widehat{u_0}(1)
	-\varepsilon\sum_{\substack{n\geq 1\\ \lambda_n+\varepsilon\in\Lambda_-(\delta)}} \sinc(\pi F(-\lambda_{n})) (-1)e^{i(\theta_{n+1}-\theta_{n})(u;\varepsilon)}\right|
	\leq C(\delta)\varepsilon^{r-c}+C\delta.
\end{equation}
We now divide the sum between the indices for which $e^{i(\theta_{n+1}-\theta_{n})}=-1$ and $e^{i(\theta_{n+1}-\theta_{n})}=1$.

According Theorem~3.9 in~\cite{bo_zero}, we already know that the {\it admissible} the initial data $u_0^{\varepsilon}$  chosen such that for every $n$, $\lambda_n(u_0^{\varepsilon};\varepsilon)=\lambda_n(u_0;\varepsilon)$ and $e^{i(\theta_{n+1}-\theta_{n})(u_0^{\varepsilon};\varepsilon)}=-1$ satisfies
\begin{equation}\label{eq:3.13}
\left|\widehat{u_0^{\varepsilon}}(1)-\varepsilon\sum_{\substack{n\geq 1\\ \lambda_n+\varepsilon\in\Lambda_-(\delta)}} \sinc(\pi F(-\lambda_n))\right|
	\leq C(\delta)\varepsilon^{r-c}+R(\varepsilon)+C\delta.
\end{equation}
Moreover, Theorem~3.14 from~\cite{bo_zero} at $t=0$  implies that there exists $\varepsilon_0(\delta)>0$ such that if $0<\varepsilon<\varepsilon_0(\delta)$, then
\[
\left|\widehat{u_0^{\varepsilon}}(1)+\frac{i}{2\pi}\int_{\min(u_0)}^{\max(u_0)}e^{-ix_+(\eta)}-e^{-ix_-(\eta)}\dd\eta\right|\leq C\delta.
\]
We recognize $\widehat{u_0}(1)$ in the second term of the left-hand side, so that \begin{equation}\label{eq:u0eps-u_0}
\left|\widehat{u_0^{\varepsilon}}(1)-\widehat{u_0}(1)\right|
	\leq C \delta.
\end{equation}
We conclude from~\eqref{eq:(17)},~\eqref{eq:3.13} and~\eqref{eq:u0eps-u_0} that when $0<\varepsilon<\varepsilon_0(\delta)$, then the indices $n\in J$ for which $(-1)e^{i(\theta_{n+1}-\theta_n)(u;\varepsilon)}\neq 1$ satisfy
\[
2\varepsilon\sum_{\substack{n\in J}} \sinc(\pi F(-\lambda_{n})) e^{i(\theta_{n+1}-\theta_{n})(u;\varepsilon)}\leq C(\delta)\varepsilon^{r-c}+R(\varepsilon)+C\delta.\qedhere
\]
\end{proof}

\subsection{Time evolution}\label{part:time}


We now proceed to the proof of Theorem~3.9 in~\cite{bo_zero}, where we replace the solution to~\eqref{eq:bo} with approximate initial data $u_0^{\varepsilon}$ by the solution to~\eqref{eq:bo} with the initial data $u_0$ itself. Since we did not prove that the approximation $e^{i(\theta_{n+1}-\theta_n)(u_0;\varepsilon)}\approx -1$ is valid in the sense of Definition 1.5 in~\cite{bo_zero},  one should instead make use of Proposition~\ref{prop:phase-zero} for the rest of this proof. Theorem 3.9 in~\cite{bo_zero}, taking into account the time evolution, becomes as follows.
\begin{prop}[Theorem 3.9 in~\cite{bo_zero}, with weaker phase constant assumption and time evolution]\label{prop:M_ACV} 
Let $0<r<1$ and $T>0$. There exists $C(\delta),\varepsilon_0(\delta,T)>0$ such that for every $0<\varepsilon<\varepsilon_0(\delta,T)$,
\begin{equation*}
\left|\widehat{u^{\varepsilon}(t)}(k)
	- \varepsilon\sum_{\substack{n\geq 1\\\lambda_n+\varepsilon\in\Lambda_-(\delta+\delta_N)}}\sinc(k\pi F(-\lambda_n))(-1)^ke^{2ikt\lambda_n}\right|
		\leq C(\delta)\varepsilon^{r-c}+C\delta.
\end{equation*}
\end{prop}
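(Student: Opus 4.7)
The strategy is to revisit the proof of Theorem~3.9 in~\cite{bo_zero} with two modifications: incorporate the linear time evolution of the phase constants, and replace the exact identity $e^{i(\theta_{n+1}-\theta_n)(u_0^{\varepsilon};\varepsilon)}=-1$ that held for the admissible initial data by the weaker statement of Proposition~\ref{prop:phase-zero}.

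First, I would express $\widehat{u^{\varepsilon}(t)}(k)$ through the Birkhoff coordinates $(\lambda_n,\theta_n)$ using the framework of~\cite{GerardKappeler2019,Gerard2022explicit}. Because the Benjamin-Ono flow is linearized in these coordinates with $\lambda_n$ conserved and $\theta_n$ moving linearly in $t$ with frequency $2\lambda_n$, the time-dependent factor $e^{2ikt\lambda_n}$ appears naturally in the formula obtained by iterating the inversion procedure $k$ times. One then arrives at the same architecture as Theorem~3.9 of~\cite{bo_zero}, except that the static phase factors $e^{ik(\theta_{n+1}-\theta_n)(u_0;\varepsilon)}$ (which were exactly $(-1)^k$ for $u_0^{\varepsilon}$) are only known to be $(\pm 1)^k$ by Theorem~\ref{thm:phase_zero}.

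Second, since $u_0$ is even, I split the Birkhoff sum into a \emph{good} set where $e^{i(\theta_{n+1}-\theta_n)(u_0;\varepsilon)}=-1$ and the \emph{upside-down} set $J$ of Proposition~\ref{prop:phase-zero}. The good part reproduces the Riemann-sum limit of~\cite{bo_zero} exactly and contributes the desired expression with error $C(\delta)\varepsilon^{r-c}+C\delta$ by the argument of Theorem~3.9 in~\cite{bo_zero}, using the Bohr--Sommerfeld rule of Theorem~\ref{thm:lax_u_trigo} to turn the sum over $n$ into an integral against $\dd F$. The bad part contributes a correction of absolute value at most
\[
2\varepsilon \sum_{n\in J} |\sinc(k\pi F(-\lambda_n))|.
\]
Using the elementary inequality $|\sin(k\pi x)|\leq k|\sin(\pi x)|$ for $k\in\N$ (proved by induction from the addition formula), this quantity is dominated by $2\varepsilon \sum_{n\in J}|\sinc(\pi F(-\lambda_n))|$, and Proposition~\ref{prop:phase-zero} then bounds it by $C(\delta)\varepsilon^{r-c}+C\delta$, which is exactly the error budget appearing in the statement.

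The main obstacle I anticipate is the bookkeeping of the time evolution. Theorem~3.9 in~\cite{bo_zero} is written at $t=0$, and one has to verify that the Riemann-sum approximation (in which $\varepsilon\sum_n$ is replaced by an integral against $\dd F$) still works uniformly for $t\in[0,T]$ despite the oscillating factor $e^{2ikt\lambda_n}$. Since the map $\lambda \mapsto \sinc(k\pi F(-\lambda))\,e^{2ikt\lambda}$ is uniformly Lipschitz on $\Lambda_-(\delta+\delta_N)$ with constant depending on $k$ and $T$ but not on $\varepsilon$, the Riemann-sum error retains the shape $C_{k,T}(\delta)\,\varepsilon$, and can be absorbed into the final remainder. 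This controlled growth in $T$ is precisely what forces the threshold $\varepsilon_0$ in the statement to depend on $T$.
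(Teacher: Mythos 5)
Your proposal is essentially the same as the paper's proof: extend inequality~(17) of~\cite{bo_zero} to $t\neq 0$ using the linear evolution of phase constants, split the Birkhoff sum into the part where $e^{i(\theta_{n+1}-\theta_n)}=-1$ and the ``upside-down'' part $n\in J$, and bound the latter with Proposition~\ref{prop:phase-zero}. Two small bookkeeping points are slightly off. First, the paper performs the bad-term estimate \emph{before} applying the Toeplitz identity of Lemma~3.12 in~\cite{bo_zero}, so the bad contribution is a sum over $k$-tuples with $\sinc(\pi F(-\lambda_{n_1}))^k$ products, which the paper then dominates via $\sinc^k\leq\sinc$; your version, phrased directly in terms of $\sinc(k\pi F)$, implicitly presumes the Toeplitz identity has already been applied, which it cannot be on the bad set where the phases are not all $-1$. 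The inequality $|\sin(k\pi x)|\leq k|\sin(\pi x)|$ you invoke does reduce $|\sinc(k\pi F)|$ to $|\sinc(\pi F)|$, so your endpoint matches, but it is not the quantity that naturally occurs at that stage. Second, the $T$-dependence of $\varepsilon_0$ does not arise from making a Riemann-sum approximation uniform in $t$ (that conversion occurs later, in the proof of Theorem~\ref{thm:zero}); it arises at the substitution $\omega_{n_i+1}-\omega_{n_i}=2\lambda_{n_i}+\varepsilon\approx 2\lambda_{n_1}+\varepsilon$, using $|\lambda_p-\lambda_n|\leq C(\delta)\varepsilon^{1-r}$ for $|n-p|\leq\varepsilon^{-r}$, which injects an error of order $kT\,C(\delta)\varepsilon^{1-r}$ into the phase. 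Neither issue is fatal, but the order of operations and the source of the time constraint are worth getting right.
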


In order to take into account the time evolution, we recall that the phase constants satisfy $\theta_n(u^\varepsilon(t);\varepsilon)=\theta_n(u_0;\varepsilon)+\omega_n(u_0;\varepsilon)t$ with
\begin{equation}\label{eq:wn}
\omega_{n+1}(u_0;\varepsilon)-\omega_{n}(u_0;\varepsilon)
	=2\lambda_n(u_0;\varepsilon)+\varepsilon,
\end{equation}
see for instance the proof of Theorem~3.14 in~\cite{bo_zero}.

\begin{proof}[Proof of Proposition~\ref{prop:M_ACV}]
Thanks to Corollary~\ref{thm:lax_u}, the proof of Theorem 3.9 in~\cite{bo_zero} is still valid as long as we do not use the approximation on the phase constants but only the approximation of the Lax eigenvalues. Since the Lax eigenvalues are constant over time, the estimates are also valid for any $t\in\R$. Hence inequality~(17) in~\cite{bo_zero} still holds:
\[
\left|\widehat{u^{\varepsilon}(t)}(k)
	-\varepsilon\sum_{\substack{n_{k+1}=n_1\\|n_{i+1}-n_i|\leq\varepsilon^{-r}\\ \lambda_{n_1}+\varepsilon\in\Lambda_-(\delta)}} \sinc(\pi F(-\lambda_{n_1}))^k\prod_{i=1}^k\frac{F(-\lambda_{n_1})e^{i(\theta_{n_i+1}-\theta_{n_i}+(\omega_{n_i+1}-\omega_{n_i})t)}}{n_{i+1}-n_i-F(-\lambda_{n_1})}\right|
		\leq C(\delta)\varepsilon^{r-c}+C\delta.
\]
Thanks to~\eqref{eq:wn} and the fact that $|\lambda_p-\lambda_n|\leq C(\delta)\varepsilon^{1-r}$ when $|n-p|\leq\varepsilon^{-r}$, we can replace $\omega_{n_i+1}-\omega_{n_i}$ by $\lambda_{n_1}+\varepsilon$. We get that for every $\varepsilon<\varepsilon_0(\delta,T)$, for every $t\in[0,T]$,
\[
\left|\widehat{u^{\varepsilon}(t)}(k)
	-\varepsilon\sum_{\substack{n_{k+1}=n_1\\|n_{i+1}-n_i|\leq\varepsilon^{-r}\\ \lambda_{n_1}+\varepsilon\in\Lambda_-(\delta)}} \sinc(\pi F(-\lambda_{n_1}))^ke^{2ik\lambda_{n_1}t}\prod_{i=1}^k\frac{F(-\lambda_{n_1})e^{i(\theta_{n_i+1}-\theta_{n_i})}}{n_{i+1}-n_i-F(-\lambda_{n_1})}\right|
		\leq C(\delta)\varepsilon^{r-c}+C\delta.
\]
We now want to use the fact that the approximation $e^{i(\theta_{n_i+1}-\theta_{n_i})}\approx -1$ is valid on a large set of indices. Thanks to the Toeplitz absolute convergence from Lemma 3.12 in~\cite{bo_zero}, there holds the upper bound
\[
\varepsilon\sum_{\substack{n_{k+1}=n_1\\|n_{i+1}-n_i|\leq\varepsilon^{-r}\\  n_1\in J}} \sinc(\pi F(-\lambda_{n_1}))^k\prod_{i=1}^k\frac{F(-\lambda_{n_1})}{|n_{i+1}-n_i-F(-\lambda_{n_1})|}
	\leq C\varepsilon\sum_{ n_1\in J} \sinc(\pi F(-\lambda_{n_1}))^k.
\]
Using that $\sinc(\pi F(-\lambda_{n_1}))^k\leq \sinc(\pi F(-\lambda_{n_1}))$ since $\sinc(\pi F(-\lambda_{n_1}))\in[0,1]$, Proposition~\ref{prop:phase-zero} implies
\[
\varepsilon\sum_{\substack{n_{k+1}=n_1\\|n_{i+1}-n_i|\leq\varepsilon^{-r}\\ n_1\in J}} \sinc(\pi F(-\lambda_{n_1}))^k\prod_{i=1}^k\frac{F(-\lambda_{n_1})}{|n_{i+1}-n_i-F(-\lambda_{n_1})|}
	\leq C(\delta)\varepsilon^{r-c}+C\delta.
\]
As a consequence, the contribution of the ``upside down'' terms such that $n_i\in J$ for some index $1\leq i\leq k$ is bounded by $C(\delta)\varepsilon^{r-c}+C\delta$, so that
\[
\left|\widehat{u^{\varepsilon}(t)}(k)
	-\varepsilon\sum_{\substack{n_{k+1}=n_1\\|n_{i+1}-n_i|\leq\varepsilon^{-r}\\ \lambda_{n_1}+\varepsilon\in\Lambda_-(\delta)}} \sinc(\pi F(-\lambda_{n_1}))^ke^{2ik\lambda_{n_1}t}(-1)^k\prod_{i=1}^k\frac{F(-\lambda_{n_1})}{n_{i+1}-n_i-F(-\lambda_{n_1})}\right|
		\leq C(\delta)\varepsilon^{r-c}+C\delta.
\]
We then use the Toeplitz identity from Lemma 3.12 in~\cite{bo_zero} and get
\[
\left|\widehat{u^{\varepsilon}(t)}(k)
	-\varepsilon\sum_{\substack{n_1\geq 1\\ n_1\in\Lambda_-(\delta+\delta_N)}} \sinc(k\pi F(-\lambda_{n_1}))^ke^{2ik\lambda_{n_1}t}(-1)^k\right|
		\leq C(\delta)\varepsilon^{r-c}+R(\varepsilon)+C\delta.\qedhere
\]
\end{proof}

We now proceed to the proof of our main theorem.
\begin{proof}[Proof of Theorem~\ref{thm:zero}]
We follow the proof of Theorem 3.14 in~\cite{bo_zero}, where we transform the sum involved in the statement of Proposition~\ref{prop:M_ACV} into a Riemann sum. We deduce that for every $\varepsilon<\varepsilon_0(\delta,T)$,
\[
\left|\widehat{u^{\varepsilon}(t)}(k)
	+\frac{i}{2k\pi}\int_{\min(u_0)}^{\max(u_0)} e^{-ik(x_+(\eta)+2\eta t)}-e^{-ik(x_-(\eta)+2\eta t)}\dd \eta\right|
		\leq C\delta.
\]
To conclude, using Proposition~3.15 in~\cite{bo_zero}, we have proven that
\[
\limsup_{\varepsilon\to 0}\left|\widehat{u^{\varepsilon}(t)}(k)
	+\frac{i}{2k\pi}\int_{\min(u_0)}^{\max(u_0)} e^{-ik(x_+(\eta)+2\eta t)}-e^{-ik(x_-(\eta)+2\eta t)}\dd \eta\right|
		\leq C\delta.
\]
Since $\delta$ is arbitrary, we obtain 
\begin{equation}\label{eq:uk}
\lim_{\varepsilon\to 0}\widehat{u^{\varepsilon}(t)}(k)
	=-\frac{i}{2k\pi}\int_{\min(u_0)}^{\max(u_0)} e^{-ik(x_+(\eta)+2\eta t)}-e^{-ik(x_-(\eta)+2\eta t)}\dd \eta.
\end{equation}
When $u_0$ is bell shaped, we conclude from Proposition~3.15 in~\cite{bo_zero} that $\widehat{u^{\varepsilon}(t)}(k)\to \widehat{u_{alt}^B(t)}(k)$ as $\varepsilon\to 0$. 
\end{proof}

\section{Extension to bell shaped initial data}\label{sec:bell_shaped}

To extend the study of the zero-dispersion limit from bell shaped trigonometric polynomials to general bell shaped initial data, we rely on an inversion formula from~\cite{Gerard2022explicit} for which one can pass to the limit both in the trigonometric approximation and in the dispersion parameter $\varepsilon\to 0$.

Let $u_0\in L^{\infty}(\T)\cap L^2(\T)$ with zero mean. The solution $u^{\varepsilon}$ to~\eqref{eq:bo} with initial data $u_0$ is well-defined~\cite{Molinet2008,MolinetPilod2012CauchyBO}. We decompose $u^{\varepsilon}=\Pi(u^\varepsilon)+\overline{\Pi(u^{\varepsilon})}$ where $\Pi$ is the Szeg\H{o} projector. Then according to Theorem~3 in~\cite{Gerard2022explicit}, the holomorphic extension of $\Pi(u^{\varepsilon})$ to $\D$ satisfies: for every $z\in\D$,
\begin{equation}\label{eq:fla_bo}
\Pi(u^{\varepsilon}(t,z))=\left\langle \left(\operatorname{Id}-ze^{i\varepsilon t}e^{2itL_{u_0}(\varepsilon)}S^*\right)^{-1}\Pi(u_0)\mid 1\right\rangle.
\end{equation}

Using this formula, we first establish the existence of a weak limit for $u^{\varepsilon}$ as $\varepsilon\to 0$. Then we show that this weak limit is $u^{B}_{alt}$ when $u_0$ is bell shaped, using the fact that this property is true for trigonometric polynomials.

We recall that
\[
L_{u_0}(\varepsilon)=\varepsilon D-T_{u_0},
\]
where $D=-i\partial_x$ and $T_{u_0}:h\in L^2_+\to \Pi(u_0h)$ is the Toeplitz operator of symbol $u_0$.

Note that when $u_0\in L^{\infty}(\T)\cap L^2(\T)$, we have that $T_{u_0}$ is a bounded self-adjoint operator in~$\mathcal{L}(L^2_+)$. We deduce that the operator $e^{it}e^{2itL_{u_0}(\varepsilon)}S^*$ has norm at most $1$ for every $\varepsilon\geq 0$. Hence for every $z\in\R$, formula~\eqref{eq:fla_bo} is continuous from $L^{\infty}(\T)\cap L^2(\T)$ to $\C$.

\begin{prop}[Existence of a weak limit]\label{prop:weaklim}
Let $u_0\in L^{\infty}(\T)\cap L^2(\T)$. Define $\widetilde{u}:=\Pi(\widetilde{u})+\overline{\Pi(\widetilde{u})}$ by the formula
\[
\Pi(\widetilde{u}(t,z))=\left\langle \left(\operatorname{Id}-z e^{-2itT_{u_0}}S^*\right)^{-1}\Pi(u_0)\mid 1\right\rangle.
\] 
 The solution $u^{\varepsilon}$ to~\eqref{eq:bo} with initial data $u_0$ is weakly convergent to $\widetilde{u}$ as $\varepsilon\to 0$: uniformly on finite time intervals
\[
u^{\varepsilon}\weaklim{\varepsilon\to 0}{} \widetilde{u}.
\]
\end{prop}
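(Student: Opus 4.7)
The plan is to pass to the limit $\varepsilon\to 0$ directly in the inversion formula~\eqref{eq:fla_bo} and reduce the statement to a strong operator convergence of the unitary evolution. Since the operator $ze^{i\varepsilon t}e^{2itL_{u_0}(\varepsilon)}S^*$ has norm at most $|z|<1$, expanding $(\operatorname{Id}-\cdot)^{-1}$ as a Neumann series and reading off Taylor coefficients at $z=0$ gives
\[
\widehat{u^{\varepsilon}(t)}(k)
= e^{ik\varepsilon t}\bigl\langle\bigl(e^{2itL_{u_0}(\varepsilon)}S^*\bigr)^{k}\Pi(u_0),\un\bigr\rangle\qquad(k\geq 1),
\]
and the analogous identity for $\widehat{\widetilde{u}(t)}(k)$ with $e^{-2itT_{u_0}}$ in place of $e^{2itL_{u_0}(\varepsilon)}$; the modes $k\leq 0$ follow from reality of $u^{\varepsilon}$ together with zero mean preservation. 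It therefore suffices to establish uniform convergence of each Fourier coefficient on $[0,T]$ and to combine this with a uniform $L^2$ bound.

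The key analytic step, and the main obstacle, is to show that
\[
e^{2itL_{u_0}(\varepsilon)}\longrightarrow e^{-2itT_{u_0}}\quad\text{strongly on }L^2_+(\T),\ \text{uniformly in }t\in[0,T].
\]
I will apply the Trotter-Kato theorem to the self-adjoint operators $A_{\varepsilon}:=2L_{u_0}(\varepsilon)=2\varepsilon D-2T_{u_0}$ with limit $A:=-2T_{u_0}$. Since $T_{u_0}$ is bounded on $L^2_+(\T)$, we have $D(A_{\varepsilon})=H^1_+(\T)$ for every $\varepsilon>0$, while $A$ is bounded on $L^2_+(\T)$, so any dense subspace is a core for $A$, in particular $H^1_+(\T)$. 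For $f\in H^1_+(\T)$ one has $A_{\varepsilon}f-Af=2\varepsilon Df\to 0$ in $L^2_+(\T)$, which yields strong resolvent convergence $A_{\varepsilon}\to A$; Trotter-Kato then provides the desired strong convergence of the unitary groups, uniformly on compact $t$-intervals. I emphasize that Trotter-Kato seems unavoidable here: a direct Duhamel estimate would require bounding $\|De^{-2itT_{u_0}}f\|$, but when $u_0\in L^{\infty}(\T)\cap L^2(\T)$ the propagator $e^{-2itT_{u_0}}$ need not preserve $H^1_+(\T)$.

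With this strong uniform convergence in hand, convergence of each Fourier coefficient follows from the telescoping identity
\[
\bigl(V_{\varepsilon}(t)S^*\bigr)^{k}-\bigl(U(t)S^*\bigr)^{k}
=\sum_{j=0}^{k-1}\bigl(V_{\varepsilon}(t)S^*\bigr)^{j}\bigl(V_{\varepsilon}(t)-U(t)\bigr)S^*\bigl(U(t)S^*\bigr)^{k-1-j},
\]
where $V_{\varepsilon}(t):=e^{2itL_{u_0}(\varepsilon)}$ and $U(t):=e^{-2itT_{u_0}}$. Since $\|V_{\varepsilon}(t)S^*\|\leq 1$ and $\|U(t)S^*\|\leq 1$, and the vectors $h_{j}(t):=S^*(U(t)S^*)^{k-1-j}\Pi(u_0)$ vary in a compact subset of $L^2_+(\T)$ as $t$ ranges over $[0,T]$ (by strong continuity of the group $U$), a standard $\eta$-net argument gives $\sup_{t\in[0,T]}\|(V_{\varepsilon}(t)-U(t))h_{j}(t)\|\to 0$. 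Combined with $e^{ik\varepsilon t}\to 1$ uniformly in $t\in[0,T]$, this yields $\sup_{t\in[0,T]}|\widehat{u^{\varepsilon}(t)}(k)-\widehat{\widetilde{u}(t)}(k)|\to 0$ for every $k\in\Z$.

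To conclude weak convergence in $\mathcal{C}([0,T],L^2_{weak}(\T))$, I use the conservation law $\|u^{\varepsilon}(t)\|_{L^2}=\|u_0\|_{L^2}$ for the Benjamin-Ono flow, which combined with the previous step forces $\widetilde{u}(t)\in L^2(\T)$ with $\|\widetilde{u}(t)\|_{L^2}\leq\|u_0\|_{L^2}$ by weak lower semicontinuity. For arbitrary $\phi\in L^2(\T)$ and $\eta>0$, choosing a trigonometric polynomial $\phi_N$ with $\|\phi-\phi_N\|_{L^2}<\eta$ gives
\[
\sup_{t\in[0,T]}\bigl|\langle u^{\varepsilon}(t)-\widetilde{u}(t),\phi\rangle\bigr|
\leq\sup_{t\in[0,T]}\bigl|\langle u^{\varepsilon}(t)-\widetilde{u}(t),\phi_{N}\rangle\bigr|+2\|u_0\|_{L^2}\eta,
\]
in which the first term involves only finitely many Fourier coefficients and tends to $0$ as $\varepsilon\to 0$ by the previous step, while $\eta$ is arbitrary. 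This yields the desired weak convergence uniformly on $[0,T]$.
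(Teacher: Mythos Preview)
Your proof is correct. Both approaches rest on the explicit formula~\eqref{eq:fla_bo} and the conservation of mass, but the routes differ. The paper argues by compactness: Banach--Alaoglu gives weak subsequential limits, and pointwise convergence of $\Pi u^{\varepsilon}(t,z)$ on $\D$ (stated without a named mechanism) identifies the limit uniquely as $\Pi\widetilde{u}(t)$. You instead expand the Neumann series, isolate each Fourier coefficient, and prove convergence directly by making the operator convergence explicit through Trotter--Kato for $e^{2itL_{u_0}(\varepsilon)}\to e^{-2itT_{u_0}}$, followed by a telescoping/compactness-in-$t$ argument. Your route is longer but has two advantages: it names the analytic input (strong resolvent convergence on the core $H^1_+$) that the paper leaves implicit, and it gives a genuinely uniform-in-$t$ argument, whereas the paper's last sentence (``since $\|\Pi u^{\varepsilon}(t)\|_{L^2_+}$ is bounded \dots one can see that the weak convergence is actually uniform'') is terse. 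The paper's compactness approach, on the other hand, is shorter and avoids the telescoping bookkeeping by reducing everything to uniqueness of the weak accumulation point.
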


\begin{proof}
But for every $\varepsilon>0$, we know from the conservation of mass that $\|\Pi u^{\varepsilon}(t)\|_{L^2_+}=\|\Pi u_0\|_{L^2_+}$ is bounded independently of $\varepsilon$. Using the sequential version of Banach-Alaoglu's theorem on the Hilbert space $L^2_+(\T)$, we deduce that there is a sequence $\varepsilon_n$ and $f\in L^2_+$ such that $\Pi u^{\varepsilon}(t)\weaklim{\varepsilon\to 0}{} f$ in $L^2_+(\T)$.

Using formula~\eqref{eq:fla_bo}, we know that when $|z|<1$ there holds as $\varepsilon\to 0$ the pointwise convergence $\Pi u^{\varepsilon}(t,z) \to \Pi\widetilde{u}(t,z)$.
As a consequence, the unique possible accumulation point is $f=\Pi \widetilde{u}(t)$. This implies that $\Pi u^{\varepsilon}(t) \weaklim{\varepsilon\to 0}{} \Pi\widetilde{u}(t)$ in $L^2_+$.

Finally, since $\|\Pi u^{\varepsilon}(t)\|_{L^2_+}$ is bounded for $t\in\R$ and $\varepsilon>0$ thanks to the conservation of mass, one can see that the weak convergence is actually uniform on finite time intervals. 
\end{proof}

\begin{rk}
One may be tempted to use this formula directly for the approximate initial data~$u_0^{\varepsilon}$ from~\cite{bo_zero} without relying on the present work. However, one cannot easily ensure that the chosen approximate initial data $(u_0^{\varepsilon})_{\varepsilon}$ are uniformly bounded in $L^{\infty}$, whereas the proof of Proposition~\ref{prop:weaklim} relies on the continuity of formula~\eqref{eq:fla_bo} on $L^{\infty}\cap L^2$.
\end{rk}

\begin{prop}[Formula for the weak limit]
Let $u_0$ be a bell shaped initial data. Then the function $\widetilde{u}$ from Proposition~\ref{prop:weaklim} satisfies
\[
\widetilde{u}=u^B_{alt}.
\]
\end{prop}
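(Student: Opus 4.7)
The plan is to reduce to the trigonometric-polynomial case already treated in Section~\ref{sec:zero} via Fourier truncation, and then to pass to the limit on both sides of the identity. Appendix~\ref{sec:trigonometric_approx} provides that the partial sums $u_{0,N}:=\sum_{|k|\le N}\widehat{u_0}(k)e^{ikx}$ form a sequence of (even) bell shaped trigonometric polynomials for $N$ large, with $u_{0,N}\to u_0$ in $\classeC^{2}(\T)$, and in particular in $L^{\infty}(\T)$. Writing $u_N^{\varepsilon}$ for the solution of \eqref{eq:bo} with initial data $u_{0,N}$, Theorem~\ref{thm:zero} applied in the trigonometric-polynomial case (established in Section~\ref{sec:zero}) gives $u_N^{\varepsilon}\rightharpoonup u^B_{alt,N}$ in $\mathcal{C}([0,T],L^2_{weak}(\T))$ as $\varepsilon\to 0$, while Proposition~\ref{prop:weaklim} applied to $u_{0,N}$ gives $u_N^{\varepsilon}\rightharpoonup\widetilde{u}_N$ with $\widetilde{u}_N$ obtained from the same formula after replacing $u_0$ by $u_{0,N}$. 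Uniqueness of the weak limit therefore yields $\widetilde{u}_N=u^B_{alt,N}$ for all such $N$.

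Next, I would pass to the limit $N\to\infty$ separately on each side of this identity. On the left, $u_{0,N}\to u_0$ in $L^{\infty}(\T)$ forces $\|T_{u_{0,N}}-T_{u_0}\|_{\mathcal{L}(L^2_+)}\to 0$, hence $e^{-2itT_{u_{0,N}}}\to e^{-2itT_{u_0}}$ strongly in $\mathcal{L}(L^2_+)$, uniformly for $t\in[0,T]$. Since $e^{-2itT_{u_{0,N}}}S^{*}$ is a contraction, the Neumann series
\[
\Pi(\widetilde{u}_N(t,z))=\sum_{k\ge 0}z^k\langle (e^{-2itT_{u_{0,N}}}S^{*})^{k}\Pi(u_{0,N})\mid 1\rangle
\]
converges absolutely for $|z|<1$ uniformly in $N$ and $t$, and dominated convergence gives $\Pi(\widetilde{u}_N(t,z))\to\Pi(\widetilde{u}(t,z))$ pointwise in $z\in\D$. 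On the right, the bell shaped structure (preserved under Fourier truncation) ensures that for $(t,x)$ outside the caustic set of $u_0$ the number of branches $2P_N(t,x)+1$ stabilizes to $2P(t,x)+1$ and each branch $u^B_{n,N}(t,x)$ converges to $u^B_n(t,x)$, by the implicit function theorem applied to $v=u_{0,N}(x-2vt)$ versus $v=u_0(x-2vt)$. Combined with a uniform $L^{\infty}$ bound, this yields $u^B_{alt,N}\to u^B_{alt}$ pointwise almost everywhere, in particular in $\mathcal{D}'$, and the identification $\widetilde{u}_N=u^B_{alt,N}$ passes to the limit to give $\widetilde{u}=u^B_{alt}$.

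The principal obstacle lies in the right-hand side passage to the limit: one must ensure that the bell shaped hypothesis is preserved under Fourier truncation, and that the branches of the multivalued Burgers solution associated with $u_{0,N}$ depend continuously on the initial datum away from caustics, including a control of the branch count near the shock fronts where the number of branches jumps. Both points are exactly the content of the supporting results of Appendix~\ref{sec:trigonometric_approx}, whose quantitative description of the trigonometric approximation of a $\classeC^3$ bell shaped potential makes the above double limit $\varepsilon\to 0$ followed by $N\to\infty$ go through.
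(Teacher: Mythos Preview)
Your overall strategy—approximate $u_0$ by Fourier truncation, identify the weak limit at the trigonometric level, then pass to $N\to\infty$ on both sides—is exactly the paper's. The left-hand passage ($\widetilde{u}_N\to\widetilde{u}$ via the operator-norm convergence $T_{u_{0,N}}\to T_{u_0}$) is also the paper's argument, phrased as continuity of formula~\eqref{eq:fla_bo} on $L^\infty\cap L^2$.

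There is, however, a genuine gap in your right-hand side. You assert that Appendix~\ref{sec:trigonometric_approx} ensures the partial sums $u_{0,N}$ are bell shaped in the full sense of Definition~\ref{def:bell shaped}, and then invoke the trigonometric case of Theorem~\ref{thm:zero} to obtain $\widetilde{u}_N=u^B_{alt,N}$. But Proposition~\ref{prop:comonotone} only delivers the monotonicity condition (item~2 of Definition~\ref{def:bell shaped}); it says nothing about the inflection-point condition (item~4). For $u_0\in\classeC^3$ one has $u_{0,N}''\to u_0''$ uniformly but no control on $u_{0,N}'''$, so there is no guarantee that $u_{0,N}$ retains exactly two \emph{simple} inflection points. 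The paper's proof explicitly flags this (``Since this assumption is not entirely true here'') and does \emph{not} write $\widetilde{u}_N=u^B_{alt,N}$.

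The paper's workaround is to stop one step earlier in Section~\ref{sec:zero}: formula~\eqref{eq:uk} for the Fourier coefficients of the weak limit is established for even \emph{weakly} bell shaped trigonometric polynomials, and it expresses $\widehat{\widetilde{u}_N(t)}(k)$ purely in terms of the inverse maps $x^N_\pm$. Proposition~\ref{prop:inflection} then gives $x^N_\pm\to x_\pm$ uniformly on compacts away from the extrema, which is enough to pass to the limit inside the integral. Only at the very end, with the limiting expression now written in terms of $x_\pm$ for $u_0$ itself, is Proposition~3.15 of~\cite{bo_zero} invoked to recognise $u^B_{alt}$—and there the full bell-shaped hypothesis on $u_0$ is legitimately available. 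Your branch-by-branch argument for $u^B_{alt,N}\to u^B_{alt}$ presupposes that $u^B_{alt,N}$ is well defined as an alternating sum of branches, which is precisely what the missing inflection-point property was needed for.
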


\begin{proof}
Let $u_0$ be an even bell shaped initial data. Let $u_{0,N}$ be the truncated Fourier series of $u_0$ up to order $N$. Then according to Proposition~\ref{prop:comonotone},  $u_{0,N}$ is increasing on $(x_{\min}(N),x_{\max}(N))$ and decreasing on $(x_{\max}(N),x_{\min}(N)+2\pi)$ where $x_{\min}(N)\to 0$ and $x_{\max}(N)\to \pi$. We denote by $u_N^{\varepsilon}$ the solution to~\eqref{eq:bo} with initial data $u_{0,N}$.
Using Proposition~\ref{prop:weaklim}, we denote by $\widetilde{u}_N$ the weak limit of $u_N^{\varepsilon}$ as $\varepsilon\to 0$, and by $\widetilde{u}$ the weak limit of $u^{\varepsilon}$.
 
We know from Section~\ref{sec:zero} that if $u_{0,N}$ is bell shaped, then $\widetilde{u}_N$ is the signed sum of Branches for the multivalued solution of Burgers' equation obtained using the method of characteristics: $\widetilde{u}_N=(u_N)^B_{alt}$. Since this assumption is not entirely true here, we instead use the formula for the limit of Fourier coefficients given by equality~\eqref{eq:uk}: denoting $x_{N,\pm}(\eta)$ the antecedents of $\eta$ by $u_{0,N}$, for every~$k\in\Z$ there holds
\begin{equation*}
\lim_{\varepsilon\to 0}\widehat{u_N^{\varepsilon}(t)}(k)
	=-\frac{i}{2k\pi}\int_{\min(u_{0,N})}^{\max(u_{0,N})} e^{-ik(x_{N,+}(\eta)+2\eta t)}-e^{-ik(x_{N,-}(\eta)+2\eta t)}\dd \eta.
\end{equation*}
This implies that
\begin{equation*}
\widehat{\widetilde{u}_N(t)}(k)
	=-\frac{i}{2k\pi}\int_{\min(u_{0,N})}^{\max(u_{0,N})} e^{-ik(x^N_{+}(\eta)+2\eta t)}-e^{-ik(x^N_{-}(\eta)+2\eta t)}\dd \eta.
\end{equation*}

Finally, according to Proposition~\ref{prop:inflection}, one can pass to the limit $N\to +\infty$ in the right hand side to get that
\begin{equation*}
\widehat{\widetilde{u}_N(t)}(k)
	\longrightarroww{N\to+\infty}{}
	-\frac{i}{2k\pi}\int_{\min(u_{0})}^{\max(u_{0})} e^{-ik(x_{+}(\eta)+2\eta t)}-e^{-ik(x_{-}(\eta)+2\eta t)}\dd \eta.
\end{equation*}
Proposition~3.15 in~\cite{bo_zero}, applied to the bell shaped initial data $u_0$, implies that the right-hand side is actually equal to $u^B_{alt}(t)$.

We now make use of  the continuity of formula~\eqref{eq:fla_bo} on $L^{\infty}(\T)\cap L^2(\T)$ for every $\varepsilon\geq 0$ and $t\in\R$. As a consequence, as $N\to +\infty$, we have that
\[
\widetilde{u}_N(t) \weaklim{N\to+\infty}{} \widetilde{u}(t).
\]
This implies that for every $k\in\Z$,
\begin{equation*}
\widehat{\widetilde{u}(t)}(k)=u^B_{alt}(t).
\end{equation*}
We conclude that $\widetilde{u}(t)=u^B_{alt}(t)$ for every $t\in\R$.
\end{proof}

\appendix
\section{Appendices}

\subsection{Phase constants of even initial data}\label{appendix:phase}

In this part, we prove that if $u$ is even, then for every $n\in\N$ such that $\zeta_n(u)\neq 0$, there holds that $\overline{\zeta_n(u;\varepsilon)}\zeta_{n+1}(u;\varepsilon)\in\R$. 

\begin{thm}[Phase constants for an even function]\label{thm:phase}
Let $u$ be an even bell shaped potential. Then for every $n\in\N$, $\overline{\zeta_n(u;\varepsilon)}\zeta_{n+1}(u;\varepsilon)\in\R$.
\end{thm}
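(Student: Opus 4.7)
\medskip
\noindent\textbf{Proof plan.} The strategy is to exploit the evenness of $u$ via an antiunitary symmetry on $L^2_+(\T)$. I would introduce the antiunitary involution
\[
C:L^2_+(\T)\to L^2_+(\T),\qquad (Cf)(x)=\overline{f(-x)},
\]
which preserves $L^2_+$ because $\widehat{Cf}(k)=\overline{\widehat{f}(k)}$ for every $k\geq 0$. The first step is to verify the commutation relation $CL_u(\varepsilon)=L_u(\varepsilon)C$. For the derivative, antilinearity of $C$ together with the sign picked up by differentiating after reflection cancel, giving $C(-i\varepsilon\partial_x f)=-i\varepsilon\partial_x(Cf)$. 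For the projection term, using $u$ even and real-valued, the substitution $y=-x$ yields
\[
\widehat{u\cdot Cf}(k)=\frac{1}{2\pi}\int_{\T}u(x)\overline{f(-x)}e^{-ikx}\dd x=\overline{\widehat{uf}(k)},\qquad k\geq 0,
\]
which equals $\widehat{C\Pi(uf)}(k)$, so $\Pi(u\cdot Cf)=C\Pi(uf)$.

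The second step relies on the fact that the spectrum of $L_u(\varepsilon)$ is simple (a result of Gérard–Kappeler). Since $C$ preserves each one-dimensional eigenspace $\C f_n$, there exist $\alpha_n\in\T$ such that $Cf_n=e^{i\alpha_n}f_n$. Pairing with $1$ in two different ways,
\[
\langle 1\mid Cf_n\rangle=e^{i\alpha_n}\langle 1\mid f_n\rangle \quad\text{and}\quad \langle 1\mid Cf_n\rangle=\int_{\T}\overline{f_n(-x)}\dd x=\overline{\langle 1\mid f_n\rangle},
\]
yields the identity $e^{i\alpha_n}\langle 1\mid f_n\rangle=\overline{\langle 1\mid f_n\rangle}$. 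In particular, whenever $\langle 1\mid f_n\rangle\neq 0$, the phase constant $\theta_n=\arg\langle 1\mid f_n\rangle$ satisfies $2\theta_n\equiv -\alpha_n\pmod{2\pi}$.

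The third step renormalizes: replacing $f_n$ by $e^{i\alpha_n/2}f_n$ one achieves $Cf_n=f_n$, i.e.\@ all Fourier coefficients $\widehat{f_n}(k)$ are real. With this normalization $\langle 1\mid f_n\rangle=2\pi\widehat{f_n}(0)\in\R$, so $\theta_n\in\{0,\pi\}$ for every $n$. Writing the Birkhoff coordinate in polar form $\zeta_n=r_n e^{i\theta_n}$ with $r_n\geq 0$ determined by the $\lambda_k$'s alone, one concludes
\[
\overline{\zeta_n}\zeta_{n+1}=r_nr_{n+1}e^{i(\theta_{n+1}-\theta_n)}\in\R,
\]
since $e^{i(\theta_{n+1}-\theta_n)}=\pm 1$.

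The main delicate point is ensuring that the canonical normalization underlying the definition of $\zeta_n$ is compatible with the $C$-invariant normalization imposed above. Since $\overline{\zeta_n}\zeta_{n+1}$ is intrinsic and only depends on the difference $\theta_{n+1}-\theta_n$, which is in turn invariant under any simultaneous phase shift $f_n\mapsto e^{i\beta}f_n$, $f_{n+1}\mapsto e^{i\beta}f_{n+1}$, this reduces to checking that the renormalization may be performed independently at each level $n$ without affecting $\overline{\zeta_n}\zeta_{n+1}$. This independence is exactly the obstacle one has to handle carefully, but once the $C$-invariant representatives have been chosen, the conclusion follows.
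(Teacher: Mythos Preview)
Your symmetry argument via the antiunitary involution $C\colon f(x)\mapsto \overline{f(-x)}$ is exactly the same mechanism the paper uses: on the disk side they write it as $f(z)\mapsto \overline{f(\overline{z})}$, observe that it preserves the eigenvalue equation when $u$ is even, and invoke simplicity of the spectrum to get eigenfunctions with real Fourier coefficients. In fact your presentation is cleaner than the paper's, which first reduces to trigonometric polynomials and passes through the matrix $A$ from the Evans function before arriving at the same conclusion that the coefficients $b_\ell$ of $f_n$ may be taken real; those intermediate steps are not actually needed for the symmetry argument.

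Where your proposal has a genuine gap is the final step, and you correctly flag it yourself. The quantity $\overline{\zeta_n}\zeta_{n+1}$ is defined via the \emph{canonical} Gérard--Kappeler eigenfunctions, and it is not invariant under independent rephasings $f_n\mapsto e^{i\beta_n}f_n$, $f_{n+1}\mapsto e^{i\beta_{n+1}}f_{n+1}$: it picks up a factor $e^{i(\beta_{n+1}-\beta_n)}$. So knowing that $\langle 1\mid f_n\rangle\in\R$ for \emph{your} $C$-invariant choice does not by itself tell you that the canonical $\theta_n$ lies in $\{0,\pi\}$. The paper closes this gap in a different way: rather than going through $\theta_n$, it computes the \emph{normalization-independent} scalar
\[
\langle f_n\mid Sf_n\rangle=2\pi\sum_{\ell\ge 0} b_{\ell+1}\overline{b_\ell},
\]
which is manifestly real once the $b_\ell$ are real, and then invokes the identity (equation~(7.8) in Gérard--Kappeler)
\[
\langle f_n\mid Sf_n\rangle=-\frac{1}{\varepsilon}\sqrt{\mu_{n+1}}\,\frac{\sqrt{\kappa_n}}{\sqrt{\kappa_{n+1}}}\,\overline{\zeta_n}\zeta_{n+1},
\]
where the prefactor is a nonzero real number depending only on the eigenvalues. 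This immediately gives $\overline{\zeta_n}\zeta_{n+1}\in\R$. If you replace your step~5--6 by this computation, your argument becomes complete and in fact shorter than the paper's.
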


We show this result for even bell shaped trigonometric polynomials of degree $N$. Indeed, for general even bell shaped function $u$, one can use the truncated Fourier series $u_N$ of degree $N$, that satisfies Proposition~\ref{prop:comonotone}. The theorem applied to $u_N$ implies that $\overline{\zeta_n(u_N;\varepsilon)}\zeta_{n+1}(u_N;\varepsilon)\in\R$ for every $n$. Moreover, this term goes to $\overline{\zeta_n(u;\varepsilon)}\zeta_{n+1}(u;\varepsilon)\in\R$ as $N\to+\infty$ by continuity of the Birkhoff map.

We have seen that the trigonometric polynomial $u_N$ extends to a meromorphic function on the unit disk $\overline{\D}$ as
\[
u_N(z)=\sum_{k=-N}^Nc_kz^{k}.
\]
Since $u$ is even, then for every $-N\leq k\leq N$, $c_k\in\R$. In particular, the function $Q$ defined in~\eqref{def:Q} as $Q(z)=\sum_{k=1}^N\left(-c_k\frac{z^k}{k}+c_k\frac{z^{-k}}{k}\right)$ satisfies $Q(\overline{z})=\overline{Q(z)}$ for every $z\in\D$.

We first study the matrix $A$ involved  in the eigenvalue equation and introduced in Definition~\ref{def:A}: for $1\leq k\leq N-1$ and $1\leq \ell\leq N$, we have
\[
A_{k,\ell}:=\int_{\Gamma_k}e^{Q(\zeta)/\varepsilon}\zeta^{-\ell-\lambda/\varepsilon}\frac{\dd\zeta}{\zeta},
\]
\[
A_{N,\ell}^{\pm}:=\int_{\Gamma_N^{\pm}}e^{Q(\zeta)/\varepsilon}\zeta^{-\ell-\lambda/\varepsilon}\frac{\dd\zeta}{\zeta},
\]
\[
A_{N,\ell}:=A_{N,\ell}^++e^{-2i\pi\lambda/\varepsilon}A_{N,\ell}^-.
\]

\begin{lem}[Matrix coefficients]
For $1\leq k\leq N-1$ and $1\leq\ell\leq N$, there holds
\[
A_{k,\ell}=-\overline{A_{N-k,\ell}}.
\]
Moreover, for $1\leq\ell\leq N$, we have
\[
e^{i\pi\lambda/\varepsilon}A_{N,\ell}\in\R.
\]
\end{lem}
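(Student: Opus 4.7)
The strategy is to exploit two complementary symmetries. First, since $u$ is even with real Fourier coefficients $c_k$, the function $Q$ from~\eqref{def:Q} satisfies $Q(\bar{\zeta}) = \overline{Q(\zeta)}$ for every $\zeta \in \mathbb{C}^{*}$. Second, the contours are stable under reflection across the real axis: using $\theta_{N-k+1} = 2\pi - \theta_k$, one checks directly from Definition~\ref{def:contours} that
\[
\overline{\Gamma_k} = -\Gamma_{N-k} \quad (1 \le k \le N-1), \qquad \overline{\Gamma_N^{\pm}} = -\Gamma_N^{\mp},
\]
where the overline is pointwise complex conjugation and the minus sign denotes orientation reversal.

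For the first identity I would conjugate the integral defining $A_{N-k,\ell}$ and perform the change of variable $\eta = \bar{\zeta}$. The real Fourier coefficients give $\overline{e^{Q(\zeta)/\varepsilon}} = e^{Q(\eta)/\varepsilon}$, and $\overline{\zeta^{-\ell}} = \eta^{-\ell}$ since $\ell$ is an integer; only the factor $\zeta^{-\lambda/\varepsilon}$ is subtle, requiring the identity $\overline{\log\zeta} = \log\bar{\zeta} - 2i\pi$ coming from the branch convention $\arg \in (0, 2\pi)$. Combining this with $\overline{\Gamma_{N-k}} = -\Gamma_k$, the change of variable yields the identity $A_{k,\ell} = -\overline{A_{N-k,\ell}}$.

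For the reality of $e^{i\pi\lambda/\varepsilon} A_{N,\ell}$, the natural approach is to symmetrize the definition around the branch cut by writing
\[
e^{i\pi\lambda/\varepsilon} A_{N,\ell} = e^{i\pi\lambda/\varepsilon} A_{N,\ell}^{+} + e^{-i\pi\lambda/\varepsilon} A_{N,\ell}^{-}.
\]
Applying the same change-of-variable argument to $A_{N,\ell}^{+}$ together with $\overline{\Gamma_N^{+}} = -\Gamma_N^{-}$ should produce a relation that pairs $e^{i\pi\lambda/\varepsilon} A_{N,\ell}^{+}$ with $e^{-i\pi\lambda/\varepsilon} A_{N,\ell}^{-}$ under conjugation, and symmetrically with the roles of $+$ and $-$ interchanged. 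Summing the two contributions then yields $\overline{e^{i\pi\lambda/\varepsilon} A_{N,\ell}} = e^{i\pi\lambda/\varepsilon} A_{N,\ell}$, i.e.\ the desired reality.

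The main delicate point is the bookkeeping of the logarithm across the branch cut $[0, +\infty)$: on $\Gamma_N^{+}$ one has $\log\zeta \to 0$ as $\zeta \to 1$, whereas on $\Gamma_N^{-}$ one has $\log\zeta \to 2i\pi$. The prefactor $e^{\pm i\pi\lambda/\varepsilon}$ in the symmetrization above is exactly one half of this $2\pi$ jump, and is what compensates the factor $e^{-2i\pi\lambda/\varepsilon}$ in the definition of $A_{N,\ell}$. All phases must be tracked carefully so that the contribution produced by $\overline{\log\zeta} = \log\bar{\zeta} - 2i\pi$ combines correctly with the orientation reversals $\overline{\Gamma_N^{\pm}} = -\Gamma_N^{\mp}$ and with the $e^{\pm i\pi\lambda/\varepsilon}$ prefactors.
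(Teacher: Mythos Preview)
Your approach is essentially the same as the paper's: both exploit that $Q(\bar\zeta)=\overline{Q(\zeta)}$ together with the reflection symmetry $\theta_{N-k+1}=2\pi-\theta_k$ of the contours. The only difference is presentational --- you package the argument as a single change of variable $\eta=\bar\zeta$ on the whole contour using $\overline{\Gamma_k}=-\Gamma_{N-k}$ and $\overline{\Gamma_N^{\pm}}=-\Gamma_N^{\mp}$, whereas the paper parameterizes each segment and arc separately and conjugates piece by piece. Your explicit identification of the branch relation $\overline{\log\zeta}=\log\bar\zeta-2i\pi$ is in fact a point the paper's own computation treats somewhat implicitly; tracking it carefully produces an overall unimodular factor $e^{\pm 2i\pi\lambda/\varepsilon}$ in the identities, which is harmless for the only use of the lemma (showing that $AV=0$ implies $A\overline V=0$).
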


\begin{proof}
Let $1\leq k\leq N-1$ and $1\leq \ell\leq N$. We recall that $\Gamma_k$ is the union of the segment $[0,e^{i\theta_k}]$, the arc of circle of radius $1$ going from $e^{i\theta_k}$ to $e^{i\theta_{k+1}}$ and the segment $[e^{i\theta_{k+1}},0]$ for $\theta_k=\frac{2k-1}{N}\pi$ and $\theta_{k+1}=\frac{2k+1}{N}\pi$.

$\bullet$ First, we parameterize the segment $[0,e^{i\theta_k}]$ by $\zeta=te^{i\theta_k}$ for $t\in[0,1]$, so that
\[
\int_{[0,e^{i\theta_k}]}e^{Q(\zeta)/\varepsilon}\zeta^{-\ell-\lambda/\varepsilon}\frac{\dd\zeta}{\zeta}
	=\int_0^1e^{Q(te^{i\theta_k})/\varepsilon}(te^{i\theta_k})^{-\ell-\lambda/\varepsilon}\frac{\dd t}{t}.
\]
Taking the conjugate, we have
\[
\overline{\int_{[0,e^{i\theta_k}]}e^{Q(\zeta)/\varepsilon}\zeta^{-\ell-\lambda/\varepsilon}\frac{\dd\zeta}{\zeta}}
	=\int_0^1e^{Q(te^{-i\theta_k})/\varepsilon}(te^{-i\theta_k})^{-\ell-\lambda/\varepsilon}\frac{\dd t}{t}.
\]
But $-\theta_k=\frac{2N-2k+1}{N}\pi=\theta_{N-k+1}$ modulo $2\pi$, therefore
\[
\overline{\int_{[0,e^{i\theta_k}]}e^{Q(\zeta)/\varepsilon}\zeta^{-\ell-\lambda/\varepsilon}\frac{\dd\zeta}{\zeta}}
	=\int_{[0,e^{i\theta_{N-k+1}}]}e^{Q(\zeta)/\varepsilon}\zeta^{-\ell-\lambda/\varepsilon}\frac{\dd\zeta}{\zeta}.
\]
In particular
\[
\overline{\int_{[0,e^{i\theta_k}]}e^{Q(\zeta)/\varepsilon}\zeta^{-\ell-\lambda/\varepsilon}\frac{\dd\zeta}{\zeta}}
	=-\int_{[e^{i\theta_{N-k+1}},0]}e^{Q(\zeta)/\varepsilon}\zeta^{-\ell-\lambda/\varepsilon}\frac{\dd\zeta}{\zeta}.
\]
The same relationship holds by considering $[0,e^{i\theta_{k+1}}]$ and $[0,e^{i\theta_{N-k}}]$.

$\bullet$ Now, we consider the integral on the arc of circle $\gamma(\theta_k,\theta_{k+1})$ of radius $1$ going from $e^{i\theta_k}$ to~$e^{i\theta_{k+1}}$. We parameterize $\zeta=e^{i\theta}$ for $\theta\in[\theta_k,\theta_{k+1}]$, so that
\[
\int_{\gamma(\theta_k,\theta_{k+1})}e^{Q(\zeta)/\varepsilon}\zeta^{-\ell-\lambda/\varepsilon}\frac{\dd\zeta}{\zeta}
	=i\int_{\theta_k}^{\theta_{k+1}}e^{Q(e^{i\theta})/\varepsilon}(e^{i\theta})^{-\ell-\lambda/\varepsilon}\dd \theta.
\]
Taking the conjugate we get
\[
\overline{\int_{\gamma(\theta_k,\theta_{k+1})}e^{Q(\zeta)/\varepsilon}\zeta^{-\ell-\lambda/\varepsilon}\frac{\dd\zeta}{\zeta}}
	=-i\int_{\theta_k}^{\theta_{k+1}}e^{Q(e^{-i\theta})/\varepsilon}(e^{-i\theta})^{-\ell-\lambda/\varepsilon}\dd \theta.
\]
We make the change of variable $\alpha=-\theta$ and get
\[
\overline{\int_{\gamma(\theta_k,\theta_{k+1})}e^{Q(\zeta)/\varepsilon}\zeta^{-\ell-\lambda/\varepsilon}\frac{\dd\zeta}{\zeta}}
	=-i\int^{\theta_{N-k+1}}_{\theta_{N-k}}e^{Q(e^{i\alpha})/\varepsilon}(e^{-i\alpha})^{-\ell-\lambda/\varepsilon}\dd \alpha.
\]
As a consequence,
\[
\overline{\int_{\gamma(\theta_k,\theta_{k+1})}e^{Q(\zeta)/\varepsilon}\zeta^{-\ell-\lambda/\varepsilon}\frac{\dd\zeta}{\zeta}}
	=-\int_{\gamma(\theta_{N-k},\theta_{N-k+1})}e^{Q(\zeta)/\varepsilon}\zeta^{-\ell-\lambda/\varepsilon}\frac{\dd\zeta}{\zeta}.
\]
Similarly in the case $k=N$, we obtain that
\[
\overline{\int_{\gamma(\theta_N,0)}e^{Q(\zeta)/\varepsilon}\zeta^{-\ell-\lambda/\varepsilon}\frac{\dd\zeta}{\zeta}}
	=-\int_{\gamma(0,\theta_{1})}e^{Q(\zeta)/\varepsilon}\zeta^{-\ell-\lambda/\varepsilon}\frac{\dd\zeta}{\zeta}.\qedhere
\]
\end{proof}

\begin{lem}
Let $f\in L^2_+$ be an eigenvector of $L_u(\varepsilon)$ associated to the eigenvalue $\lambda$. Then $\langle f|Sf\rangle\in\R$.
\end{lem}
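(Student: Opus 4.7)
The plan is to exploit the evenness of $u$ together with simplicity of the Lax spectrum. Introduce the antilinear involution $\mathcal{C}\colon L^2_+(\T)\to L^2_+(\T)$ given by $(\mathcal{C}g)(x):=\overline{g(-x)}$; equivalently $\widehat{\mathcal{C}g}(n)=\overline{\widehat{g}(n)}$ for $n\ge 0$, which makes it clear that $\mathcal{C}$ preserves $L^2_+$ and squares to the identity.

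First I would verify that $\mathcal{C}$ commutes with $L_u(\varepsilon)$. The chain rule yields $(-i\varepsilon\partial_x)(\mathcal{C}g)(x)=i\varepsilon\overline{g'(-x)}=\mathcal{C}\bigl((-i\varepsilon\partial_x)g\bigr)(x)$, where the minus sign from differentiating $g(-x)$ is canceled by complex conjugation. Because $u$ is real and even, $u(x)\overline{g(-x)}=\overline{u(-x)g(-x)}$, i.e.\ $u\cdot \mathcal{C}g=\mathcal{C}(ug)$. Combined with the obvious fact that $\mathcal{C}$ commutes with the Szeg\H o projector $\Pi$ (being diagonal in the Fourier basis on $\N$), this gives $\mathcal{C}L_u(\varepsilon)=L_u(\varepsilon)\mathcal{C}$.

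Next, using that $L_u(\varepsilon)$ has simple real spectrum for real-valued $u$ with zero mean (a standard feature of the Benjamin--Ono Lax operator, due to Gérard--Kappeler), the eigenspace associated to $\lambda$ is one-dimensional. Applying $\mathcal{C}$ to $L_u f=\lambda f$ yields $L_u(\mathcal{C}f)=\lambda\,\mathcal{C}f$, hence $\mathcal{C}f=cf$ for some $c\in\C$. Applying $\mathcal{C}$ once more gives $f=|c|^2 f$, so $|c|=1$. In particular $|f(-x)|=|f(x)|$, so $|f|^{2}$ is an even function on $\T$.

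Finally, write
\[
\langle f\mid Sf\rangle=\int_\T |f(x)|^{2}\,e^{ix}\,\dd x=\int_\T|f(x)|^{2}\cos(x)\,\dd x+i\int_\T|f(x)|^{2}\sin(x)\,\dd x.
\]
The second integral vanishes by oddness of $\sin$ paired with the even density $|f|^{2}$, so $\langle f\mid Sf\rangle\in\R$. The only non-routine point in this argument is the appeal to simplicity of the Lax spectrum, which is what makes the rigid identity $\mathcal{C}f=cf$ available and allows the symmetry of $u$ to be transferred into a symmetry of $|f|^{2}$.
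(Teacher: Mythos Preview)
Your argument is correct and in fact cleaner than the paper's. The core idea is the same---the antilinear involution $f(z)\mapsto\overline{f(\bar z)}$ on the disk is exactly your $\mathcal{C}$ in Fourier coordinates---but the paper reaches it by a detour: it first uses the Evans-function matrix $A$ to show the auxiliary vector $V=(v_1,\dots,v_N)$ can be taken real, and only then checks that $\overline{f(\bar z)}$ satisfies the ODE~\eqref{eq:f}, concluding that the Taylor coefficients $b_\ell$ are real up to a common phase and hence $\sum b_{\ell+1}\overline{b_\ell}\in\R$. Your route bypasses the ODE and the trigonometric-polynomial structure entirely by verifying directly that $\mathcal{C}$ commutes with $L_u(\varepsilon)$, which immediately gives $|f|^2$ even; this works for any real even $u$ for which the Lax spectrum is simple, not only for trigonometric polynomials, so the paper's subsequent limiting argument to pass from $u_N$ to $u$ becomes unnecessary for this lemma. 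One cosmetic point: with the paper's convention $\langle g\mid h\rangle=\int g\bar h$, one has $\langle f\mid Sf\rangle=\int|f|^2 e^{-ix}\,\dd x$ rather than $e^{ix}$, but this does not affect your conclusion since the imaginary part vanishes either way by the evenness of $|f|^2$.
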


\begin{proof}
We recall that the vector $V=(v_1,\dots,v_N)$ is solution to $AV=0$.

Let $C_1^T,\dots, C_N^T$ be the lines of $A$. Then $e^{i\pi\lambda/\varepsilon}C_N^T$ is real valued whereas $C_{N-1-k}=-\overline{C_k}$ for every $1\leq k\leq N-1$. The fact that $AV=0$ implies that for every $1\leq k\leq N$,
\[
C_k^T V=0.
\]
In particular, one can see that if $1\leq k\leq N-1$,
\[
C_k^T\overline{V}=-\overline{C_{N-k}^T V}=0
\]
and also
\[
e^{i\pi\lambda/\varepsilon} C_N^T\overline{V}=\overline{e^{i\pi\lambda/\varepsilon} C_N^TV}=0.
\]
This implies that $\overline{V}$ also satisfies $A\overline{V}=0$. Since $\lambda$ is a simple eigenvalue (see~\cite{GerardKappeler2019}) and the coefficients of the eigenvectors solve a triangular system depending on the Taylor expansion of the eigenfunction at $0$, we deduce that there exists $\alpha\in\C$ such that $\overline{V}=\alpha V$. However this implies that $\overline{V}=|\alpha|^2\overline{V}$ therefore $|\alpha|=1$. Writing $\alpha=e^{2i\theta}$ for some $\theta\in\T$, we get that $\overline{e^{i\theta}V}=e^{i\theta} V$, so that $e^{i\theta}V$ is a real-valued vector.

We recall the equation~\eqref{eq:f} satisfied by $f$:
\begin{equation*}
zf'(z)-\left(\sum_{k=1}^Nc_kz^k+\overline{c_k}z^{-k}\right)f(z)-\lambda f(z)=\sum_{j=1}^N v_jz^{-j}.
\end{equation*}
We note that, up to replacing $f$ by $e^{-i\theta}f$, which transforms $V$ into $e^{-i\theta}V$, one can assume that $V$ is real-valued.

Now, we check that $z\mapsto \overline{f(\overline{z})}$ is a holomorphic function on $\D$ also satisfying~\eqref{eq:f}. Since $\lambda$ is a simple eigenvalue, this function is colinear to $f$, so that there exists $\alpha\in\C$ such that for every $z\in\D$, we have $\overline{f(\overline{z})}=\alpha f(z)$.
We write the Taylor expansion of $f$ around zero as $f(z)=\sum_{\ell\geq 0}b_{\ell}z^{\ell}$ for some $b_{\ell}\in\C$. Then $\overline{f(\overline{z})}=\sum_{\ell\geq 0}\overline{b_{\ell}}z^{\ell}$. Therefore the number $\alpha\in\C$ is so that for every $\ell\geq 0$, $\overline{b_{\ell}}=\alpha b_{\ell}$. Since $f$ is nonzero, there exists $\ell$ such that $b_{\ell}\neq 0$. We deduce that $|\alpha|=1$. Writing $\alpha=e^{2i\theta}$ for some $\theta\in\T$, we deduce that $\overline{e^{i\theta} b_{\ell}}=e^{i\theta} b_{\ell}$ and therefore $e^{i\theta}b_{\ell}\in\R$ for every $\ell\geq 0$. As a consequence, the parametrization $z=e^{ix}$ leads to
\[
\langle f|Sf\rangle=\int_{0}^{2\pi}|f(e^{ix})|^2e^{-ix}\dd x=-i\int_{\partial\D}\sum_{k,\ell\geq 0}b_k\overline{b_{\ell}}z^{k-\ell-2}\dd z.
\]
The residue formula implies $k-\ell-2=-1$, hence
\[
\langle f|Sf\rangle={2\pi}\sum_{\ell\geq 0}b_{\ell+1}\overline{b_{\ell}}.
\]
We conclude that $\langle f|Sf\rangle\in\R$ by noting that $b_{\ell+1}\overline{b_{\ell}}=e^{i\theta}b_{\ell+1}\overline{e^{i\theta} b_{\ell}}\in\R$ for every $\ell\geq 0$.
\end{proof}

\begin{proof}[Proof of Theorem~\ref{thm:phase}]
We use the fact that 
\[
\langle f_n(u;\varepsilon)|Sf_n(u;\varepsilon)\rangle
	=-\frac{1}{\varepsilon}\sqrt{\mu_{n+1}(u;\varepsilon)}\frac{\sqrt{\kappa_n(u;\varepsilon)}}{\sqrt{\kappa_{n+1}(u;\varepsilon)}}\overline{\zeta_n(u;\varepsilon)}\zeta_{n+1}(u;\varepsilon),
\]
see for instance equality~(7.8) in~\cite{GerardKappeler2019}. As a consequence, since the term $\langle f_n(u;\varepsilon)|Sf_n(u;\varepsilon)\rangle$ is real, so is $\overline{\zeta_n(u;\varepsilon)}\zeta_{n+1}(u;\varepsilon)$.
\end{proof}

\subsection{Approximation by  truncated Fourier series}\label{sec:trigonometric_approx}

In this part, we fix a bell shaped function $u$. Our goal is to show that the approximation of $u$ by its truncated Fourier series $u_N$ is comonotone so that it has weakened properties of a bell shaped function. We deduce in particular Corollary~\ref{thm:lax_u}.


More precisely, we use the following result.
\begin{prop}[Comonotone approximation]\label{prop:comonotone}
Let  $u$ be a bell shaped function in $\classeC^3(\T)$. We decompose $u$ into a Fourier series
\[
u(x)=\sum_{k=-\infty}^{\infty}c_ke^{ikx}.
\]
Then there exists $N_0\geq 1$ such that for every $N\geq N_0$, the truncated Fourier series of order $N$
\[
u_N(x):=\sum_{k=-N}^{N}c_ke^{ikx}
\]
satisfies the following properties. There exists a sequence $x_{\min}(N)\to 0$ and $x_{\max}(N)\to x_{\max}$ as $N\to+\infty$ such that the $2\pi$-periodic function $u_N$ is increasing on $(x_{\min}(N),x_{\max}(N))$ and decreasing on $(x_{\max}(N),x_{\min}(N)+2\pi)$, moreover,
\begin{equation}\label{eq:distance_approxN}
\|u-u_N\|_{L^{\infty}(\T)}\leq \frac{C}{N\sqrt{N}}.
\end{equation}
\end{prop}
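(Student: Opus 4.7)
The plan is to establish the $L^\infty$ bound from a Cauchy--Schwarz estimate on the tail of the Fourier series, and then obtain the monotonicity structure by comparing $u_N$ to $u$ in the $C^2$ topology. For $u \in C^3(\T)$ one has $u''' \in L^2(\T)$, hence $\sum k^6|c_k|^2 < +\infty$, and therefore
\[
\|u-u_N\|_{L^\infty(\T)} \leq 2\sum_{|k|>N}|c_k| \leq 2\Bigl(\sum_{|k|>N}\frac{1}{k^4}\Bigr)^{1/2}\Bigl(\sum_{k\in\Z}k^4|c_k|^2\Bigr)^{1/2} \leq \frac{C}{N\sqrt N}.
\]
The same reasoning, applied to the Fourier coefficients $ikc_k$ and $-k^2c_k$ of $u'$ and $u''$, yields
\[
\|u'-u_N'\|_{L^\infty} \leq \frac{C}{N\sqrt N}, \qquad \|u''-u_N''\|_{L^\infty} \leq \frac{C}{\sqrt N}.
\]

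By the bell-shaped assumption, the only critical points of $u$ on $[0,2\pi)$ are $0$ (a minimum) and $x_{\max}$ (a maximum), and since the inflection points $\xi_\pm$ are strictly interior to $(0,x_{\max})$ and $(x_{\max},2\pi)$, the continuity of $u''$ combined with the sign of $u''$ at the extrema forces $u''(0)>0$ and $u''(x_{\max})<0$. Fix $\delta_0 > 0$ smaller than the distance from $\{0,x_{\max}\}$ to $\{\xi_-,\xi_+\}$ and small enough that $u'' \geq u''(0)/2 > 0$ on $[-\delta_0,\delta_0]$ and $u'' \leq u''(x_{\max})/2 < 0$ on $[x_{\max}-\delta_0,x_{\max}+\delta_0]$. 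Outside these two neighborhoods but inside one period, $|u'|$ is bounded below by a constant $c_0>0$; the estimate on $u'-u_N'$ then shows that for $N$ large, $u_N'$ has no zero there and keeps the sign of $u'$. On $[-\delta_0,\delta_0]$, the uniform convergence $u_N''\to u''$ gives $u_N''>0$ for $N$ large, so $u_N'$ is strictly increasing; since $u_N'(-\delta_0)<0<u_N'(\delta_0)$ for such $N$, the intermediate value theorem yields a unique zero $x_{\min}(N)\in(-\delta_0,\delta_0)$. The same argument near $x_{\max}$ produces a unique zero $x_{\max}(N)\in(x_{\max}-\delta_0,x_{\max}+\delta_0)$ where $u_N'$ is strictly decreasing, and both zeros converge to their respective limits as $N\to+\infty$ since $u_N'(0)$ and $u_N'(x_{\max})$ tend to $0$ while $u_N'$ grows at a uniform linear rate near the extrema.

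Combining these facts, for $N$ large $u_N'$ vanishes on $(0,2\pi)$ exactly at $x_{\min}(N)$ and $x_{\max}(N)$; evaluating at an intermediate point like $x_{\max}/2$, where $u_N'(x_{\max}/2) \to u'(x_{\max}/2) > 0$, we deduce $u_N' > 0$ on $(x_{\min}(N),x_{\max}(N))$ and symmetrically $u_N' < 0$ on $(x_{\max}(N),x_{\min}(N)+2\pi)$, which is the required monotonicity. The main obstacle is that the convergence $u_N''\to u''$ is slower ($N^{-1/2}$ rate) than that of $u_N$ and $u_N'$, but this is harmless because one only needs qualitative convergence on the fixed neighborhoods of the extrema where $|u''|$ is uniformly bounded below, so that the intermediate value argument extracting the unique zero of $u_N'$ still applies.
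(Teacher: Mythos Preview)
Your proof is correct and follows essentially the same strategy as the paper: both arguments use the $C^2$ convergence of the truncated Fourier series (via Cauchy--Schwarz with the $H^3$ bound) to transfer the bell-shaped structure from $u$ to $u_N$. The only cosmetic difference is the localization: the paper removes small neighborhoods of the inflection points $\xi_\pm$ (where $u''$ may be small) and controls $u_N'$ there, whereas you remove neighborhoods of the extrema $0,x_{\max}$ (where $u'$ may be small) and control $u_N''$ there---these are dual versions of the same idea.
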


Consequently, the trigonometric approximation is also bell shaped up to spatial translation and up to removing the mean value.

\begin{proof}
Let $u$ be a bell shaped initial data. Then we estimate
\begin{align*}
\|u''-u_N''\|_{L^{\infty}(\T)}
	&\leq \sum_{|k|\geq N+1}k(k-1)|c_k|\\
	&\leq \left(\sum_{|k|\geq N+1}k^6|c_k|^2\right)^{1/2}\left(\sum_{|k|\geq N+1}\frac{1}{k^2}\right)^{1/2}\\
	&\leq C\frac{\|u\|_{H^3(\T)}}{\sqrt{N}}.
\end{align*}
By assumption, there are only two inflection points $\xi_{\pm}$ such that $u''(\xi_{\pm})=0$. 
Let $\alpha_1>0$ sufficiently small. We remove a box of size $\alpha_2$ around $\xi_{\pm}$ so that if $x\in [0,2\pi]\setminus([\xi_--\alpha_2,\xi_-+\alpha_2]\cup[\xi_+-\alpha_2,\xi_++\alpha_2])$, then $|u''(x)|\geq \alpha_1$. When $ C\frac{\|u\|_{\classeC^3(\T)}}{\sqrt{N}}\leq \alpha_1$, we deduce that $u_N''$ has the same sign as $u''$ on this set.

As a consequence, $u_N'$ is strictly increasing on $[\xi_++\alpha_2-2\pi,\xi_--\alpha_2]$ and strictly decreasing on $[\xi_-+\alpha_2,\xi_+-\alpha_2]$.
By choosing $\alpha_2$ small enough, we also get that $u_N'>0$ on $[\xi_--\alpha_2,\xi_-+\alpha_2]$ and $u_N'<0$ on $[\xi_+-\alpha_2,\xi_++\alpha_2]$.

We deduce that there exist unique $x_{\min}(N),x_{\max}(N)\in[0,2\pi]$ such that $u_N$ is increasing on $(x_{\min}(N),x_{\max}(N))$ and decreasing on $(x_{\max}(N),x_{\min}(N)+2\pi)$. Finally, an application of the Cauchy-Schwarz' inequality as in the beginning of this proof leads to~\eqref{eq:distance_approxN}. This implies that $x_{\min}(N)\to 0$ and $x_{\max}(N)\to x_{\max}$ as $N\to+\infty$.
\end{proof}


\paragraph{Proof of Corollary~\ref{thm:lax_u}}
From the min-max formula
\[
\lambda_n(u;\varepsilon)=\max_{\dim F=n}\min\{\langle L_u(\varepsilon)h|h\rangle \mid h\in H^1_+\cap F^{\perp},\;\|h\|_{L^2}=1\},
\]
one can see that for every $u,v\in \classeC^0(\T)$, there holds
\[
|\lambda_n(u;\varepsilon)-\lambda_n(v;\varepsilon)|\leq \|u-v\|_{L^{\infty}}.
\]
Using truncated Fourier series, see Proposition~\ref{prop:comonotone} below, we deduce that if $u\in\classeC^3(\T)$, then for $N\geq N_0$, we have 
\begin{equation*}
|\lambda_n(u;\varepsilon)-\lambda_n(u_N;\varepsilon)|\leq C\frac{1}{N\sqrt{N}}
\end{equation*}
This approximation leads to the estimate
\begin{equation*}
\left|\int_{-\lambda_n(u;\varepsilon)}^{-\lambda_p(u;\varepsilon)}F(\eta)\dd\eta-(n-p)\varepsilon\right|
	\leq \frac{C}{N\sqrt{N}}+C_N(\delta)\varepsilon\sqrt{\varepsilon}.
\end{equation*}
This also leads to point 1.\@ in Corollary~\ref{thm:lax_u}.
However, this estimate is not sufficient compared to Definition 1.5 in~\cite{bo_zero} for an admissible approximate initial data, as the necessary condition is
\begin{equation}\label{eq:def1.5}
\left|\int_{-\lambda_n(u;\varepsilon)}^{-\lambda_p(u;\varepsilon)}F(\eta)\dd\eta-(n-p)\varepsilon\right|
	\leq C_u(\delta)\varepsilon \sqrt{\varepsilon}.
\end{equation}
This condition is especially important in the proof of Lemma 3.10 in~\cite{bo_zero}.
 One could study the growth of $C_N(\delta)$ with $N$ to show that if $u$ has very fast decaying Fourier coefficients as $|\widehat{u}(n)|\leq Cn^{-Cn^2}$, then the growth of $C_N(\delta)$ is sufficiently slow so that for $N=N(\varepsilon)$ well chosen, for every $n,p\in\Lambda_-(\delta)$,
 \[
  \frac{C}{N(\varepsilon)\sqrt{N(\varepsilon)}}+C_{N(\varepsilon)}(\delta)\varepsilon\sqrt{\varepsilon}\leq C_u(\delta)\varepsilon\sqrt{\varepsilon}.
 \]
As a consequence~\eqref{eq:def1.5} still holds. However, we have seen that the approach in Section~\ref{sec:bell_shaped} does not require such a decay of the Fourier coefficients.


We  can now turn to very large the Lax eigenvalues for bell shaped initial data to get point~2.\@ of Corollary~\ref{thm:lax_u}. Let $u_0$ be a bell shaped initial data, fix $N\geq N_0$ and let $u_N$ be the truncated Fourier series of $u$ up to order $N$. 

Fix $\delta>0$. Using the Parseval formula, for $K\in\N$, we have
\[
K\varepsilon\sum_{k=K}^{\infty}\gamma_k(u;\varepsilon)
	\leq \varepsilon\sum_{k=1}^{\infty}k\gamma_k(u;\varepsilon)=\|u\|_{L^2}^2/2.
\]
As a consequence, if $K(\delta)= \frac{C}{\delta}$, one has $\sum_{k=K(\delta)/\varepsilon}^{\infty}\gamma_k(u;\varepsilon)\leq \delta$, therefore for every $n\geq K(\delta)/\varepsilon$,
\[
|\lambda_n-n\varepsilon|\leq \delta.
\]
In particular one can show that for $n\geq \frac{K(\delta)}{\varepsilon}$ then $\lambda_n\geq K(\delta)-\delta$, whereas for $n=\frac{K(\delta)}{\varepsilon}$, then $\lambda_n\leq K(\delta)+\delta$ (and this property is also true for the smaller indices).

\paragraph{Estimate of $x_{\pm}$ in the trigonometric approximation}
Consider  $u\in\classeC^0(\T)$ weakly bell shaped, strictly increasing from $u_{\min}$ to $u_{\max}$ on $(x_{\min},x_{\max})$ and strictly decreasing on $(x_{\max},x_{\min}+2\pi)$. For $\eta\in(u_{\min},u_{\max})$, we define $x_+(\eta)$ as the antecedent of $\eta$ by $u$ on $(x_{\min},x_{\max})$ and by $x_-(\eta)$ as the antecedent of $\eta$ by $u$ on $(x_{\max},x_{\min}+2\pi)$. When considering  a sequence $(u_n)_n$ we denote $x^n_{\pm}$ the points $x_{\pm}$ associated to $u_n$.
\begin{prop}\label{prop:inflection}
Let $u\in\classeC^1(\T)$ be a weakly bell shaped function and $(u_n)_n\in\classeC^0(\T)$ a sequence of weakly bell shaped functions uniformly convergent to $u$. Then for every $\delta>0$,
\[
\|x^n_+(\eta)-x_+(\eta)\|_{L^{\infty}([u_{\min}+\delta,u_{\max}-\delta])} +\|x^n_-(\eta)-x_-(\eta)\|_{L^{\infty}([u_{\min}+\delta,u_{\max}-\delta])} \longrightarroww{n\to+\infty}{} 0.
\] 
\end{prop}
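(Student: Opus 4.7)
The plan is to argue by a standard compactness/subsequence extraction, exploiting the fact that the weakly bell shaped assumption guarantees the uniqueness of antecedents of $\eta$ inside a prescribed monotonicity interval.

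As a preliminary step, I would show that the extremal points converge: $x^n_{\min}\to x_{\min}$ and $x^n_{\max}\to x_{\max}$. By uniform convergence of $u_n$ to $u$, we have $\min u_n\to \min u$ and $\max u_n\to \max u$. For any subsequential limit $x^*$ of $x^n_{\min}$ in $[0,2\pi]$, passing to the limit in $u_n(x^n_{\min})=\min u_n$ gives $u(x^*)=\min u$; since $u$ is weakly bell shaped, it has a unique minimum $x_{\min}$, so $x^*=x_{\min}$. The analogous argument handles $x^n_{\max}$.

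Next I would establish pointwise convergence $x^n_+(\eta)\to x_+(\eta)$ for every $\eta\in[u_{\min}+\delta,u_{\max}-\delta]$. For $n$ large, $\eta\in(\min u_n,\max u_n)$, so $x^n_\pm(\eta)$ is well-defined. If $x^{n_k}_+(\eta)\to x^*$ along a subsequence, then $u_{n_k}(x^{n_k}_+(\eta))=\eta$ together with uniform convergence of $u_n$ yields $u(x^*)=\eta$. Since $x^{n_k}_+(\eta)\in[x^{n_k}_{\min},x^{n_k}_{\max}]$ and by the previous step $[x^{n_k}_{\min},x^{n_k}_{\max}]\to[x_{\min},x_{\max}]$, the limit $x^*$ belongs to $[x_{\min},x_{\max}]$. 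Because $\eta\in(u_{\min},u_{\max})$ strictly, $x^*$ cannot equal $x_{\min}$ or $x_{\max}$; strict monotonicity of $u$ on $(x_{\min},x_{\max})$ then forces $x^*=x_+(\eta)$. By compactness of $[0,2\pi]$, the full sequence converges.

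To upgrade pointwise to uniform convergence on the compact interval $[u_{\min}+\delta,u_{\max}-\delta]$, I would argue by contradiction: assume there exist $\varepsilon_0>0$, a subsequence $n_k$, and $\eta_k\in[u_{\min}+\delta,u_{\max}-\delta]$ with $|x^{n_k}_+(\eta_k)-x_+(\eta_k)|\geq\varepsilon_0$. Up to further extraction, $\eta_k\to\eta^*\in[u_{\min}+\delta,u_{\max}-\delta]$ and $x^{n_k}_+(\eta_k)\to x^*\in[0,2\pi]$. Passing to the limit in $u_{n_k}(x^{n_k}_+(\eta_k))=\eta_k$ yields $u(x^*)=\eta^*$, and the same localization argument as above forces $x^*=x_+(\eta^*)$. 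But $x_+$ is continuous on $[u_{\min}+\delta,u_{\max}-\delta]$ (as the inverse of a continuous strictly monotone function), so $x_+(\eta_k)\to x_+(\eta^*)=x^*$ as well, contradicting $|x^{n_k}_+(\eta_k)-x_+(\eta_k)|\geq\varepsilon_0$. The same reasoning applied to the decreasing branch gives the convergence of $x^n_-$.

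The main technical point is really only this localization: making sure that the subsequential limit of $x^n_+(\eta)$ lands in the open monotonicity interval $(x_{\min},x_{\max})$ rather than at its boundary. This is ensured by the margin $\delta$ in the assumption $\eta\in[u_{\min}+\delta,u_{\max}-\delta]$ combined with the preliminary convergence of $x^n_{\min},x^n_{\max}$; neither the $\mathcal{C}^1$ regularity of $u$ nor any non-degeneracy of $u'$ at $x_{\max}$ is needed in this argument, only continuity of $u$ and its strict monotonicity on the two bell-shaped intervals.
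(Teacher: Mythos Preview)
Your argument is correct and takes a genuinely different route from the paper's proof. The paper exploits the $\mathcal{C}^1$ hypothesis on $u$ directly: since $u'>0$ on $(x_{\min},x_{\max})$, compactness gives a lower bound $|u'|\geq 1/C(\delta)$ on the set $\{x:u(x)\in[u_{\min}+\delta,u_{\max}-\delta]\}$, and then the mean value theorem yields the quantitative estimate
\[
|x_+^n(\eta)-x_+(\eta)|\leq C(\delta)\,\|u_n-u\|_{L^\infty},
\]
from which uniform convergence is immediate. Your approach instead is a soft compactness/subsequence argument relying only on continuity and strict monotonicity of $u$, together with the preliminary convergence $x^n_{\min}\to x_{\min}$, $x^n_{\max}\to x_{\max}$ to localize the limit in the correct monotonicity interval. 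As you note, your argument does not need $u\in\mathcal{C}^1$ at all, so it proves a slightly stronger statement; on the other hand, the paper's argument delivers an explicit rate of convergence in terms of $\|u_n-u\|_{L^\infty}$, which your compactness argument does not. For the application in Section~\ref{sec:bell_shaped} only the qualitative convergence is used, so either proof suffices.
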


\begin{proof}
Let $\delta>0$. Since $\|u_n-u\|_{L^{\infty}}\to 0$, we know that there exists $n_0$ such that for every $n\geq n_0$, $\|u_n-u\|_{L^{\infty}}\leq \delta/2$. In particular $\eta$ has an antecedent by $u_n$ for every $\eta\in [u_{\min}+\delta,u_{\max}-\delta]$. Moreover, by strict monotonicity, $|u'|$ is bounded from below by $\frac{1}{C(\delta)}$ when $x$ is far from $x_{\min}$ and $x_{\max}$ in the sense that $u(x)\in  [u_{\min}+\delta,u_{\max}-\delta]$. We consider for instance the values of $x$ of the form $x=x_+(\eta)$ for some $\eta\in  [u_{\min}+\delta,u_{\max}-\delta]$

 Let $x$ such that $u(x)\in  [u_{\min}+\delta,u_{\max}-\delta]$, and $n\geq n_0$. We know that there exists $y_n:=x_+^n(u_n)$ satisfying $u(x)=u_n(y_n)$. Since $|u'|$ is bounded from below, we have that
\[
|u(x)-u(y_n)|\geq \frac{1}{C'(\delta)}|x-y_n|.
\]
Moreover, $u(x)=u_n(y_n)$ so the left hand side is
\[
|u(x)-u(y_n)|=|u_n(y_n)-u(y_n)|\leq\|u_n-u\|_{L^{\infty}}.
\]
We deduce that 
\[
|x-y_n|\leq C(\delta)\|u_n-u\|_{L^{\infty}}
\]
Let $\nu>0$. Then for $n\geq n_1$ large enough, we have that for every $x$ such that $u(x)\in  [u_{\min}+\delta,u_{\max}-\delta]$,  denoting $y_n\in\T$ the point at which $u(x)=u_n(y_n)$, there holds
\(
|x-y_n|\leq \nu.
\)
We deduce that for every $\eta\in [u_{\min}+\delta,u_{\max}-\delta]$,
\[
|x_+(\eta)-x_+^n(\eta)|\leq \nu.\qedhere
\]
\end{proof}

\Addresses
\end{document}